\newtheorem{theorem}{Theorem}
\newtheorem{lemma}[theorem]{Lemma}
\newtheorem{corollary}[theorem]{Corollary}
\newtheorem{proposition}[theorem]{Proposition}
\newtheorem{convention}[theorem]{Convention}
\newtheorem{definition}[theorem]{Definition}
\renewcommand{\r}{\mathrm}
\DeclareRobustCommand{\lang}{\begin{picture}(5,7)
\put(1.2,2.5){\rotatebox{45}{\line(1,0){6.0}}}
\put(1.2,2.5){\rotatebox{315}{\line(1,0){6.0}}}
\end{picture}\kern.16em}
\DeclareRobustCommand{\rang}{\kern.1em\begin{picture}(5,7)
\put(.1,2.5){\rotatebox{135}{\line(1,0){6.0}}}
\put(.1,2.5){\rotatebox{225}{\line(1,0){6.0}}}
\end{picture}}
\newcommand{\xppy}[1]{{\setlength{\mathsurround}{0em}%
\ref{d.1=:xpqpy|->}\kern.1em$_{#1}$}} 
\newcommand{\xptp}[1]{{\setlength{\mathsurround}{0em}%
\ref{d.1=:xpqtp|->}\kern.1em$_{#1}$}}
\newcommand{\xpTP}[1]{{\setlength{\mathsurround}{0em}%
\ref{d.1=:xpqtp|->}\kern.1em$'_{#1}$}}
\begin{document}

\title%
[Universal inner inverses]%
{Adjoining a universal inner inverse to a ring element}%
\thanks{
Archived at \url{http://arxiv.org/abs/1505.02312}\,.
After publication of this note, updates, errata, related references
etc., if found, will be recorded at
\url{http://math.berkeley.edu/~gbergman/papers}
}

\subjclass[2010]{Primary: 16S10, 16S15.
Secondary: 16D40, 16D70, 16E50, 16U99.}
\keywords{Universal adjunction of an inner inverse to an
element of a $\!k\!$-algebra; normal forms in rings and modules%
}

\author{George M. Bergman}
\address{University of California\\
Berkeley, CA 94720-3840, USA}
\email{gbergman@math.berkeley.edu}

\begin{abstract}
Let $R$ be an associative
unital algebra over a field $k,$ let $p$ be an element
of $R,$ and let $R'=R\lang q\mid pqp=\nolinebreak p\rang.$
We obtain normal forms for elements of $R',$ and for elements
of $\!R'\!$-modules arising by extension of scalars
from $\!R\!$-modules.
The details depend on where in the chain
$pR\cap Rp \subseteq pR\cup Rp \subseteq pR + Rp \subseteq R$
the unit $1$ of $R$ first appears.

This investigation is motivated by a hoped-for application
to the study of the possible forms of the monoid of isomorphism
classes of finitely generated projective modules over a von~Neumann
regular ring; but that goal remains distant.

We end with a normal form result for the algebra obtained by
tying together a $\!k\!$-algebra $R$ given with a nonzero element $p$
satisfying $1\notin pR+Rp$ and a $\!k\!$-algebra $S$ given
with a nonzero $q$
satisfying $1\notin qS+Sq,$ via the pair of relations $p=pqp,$ $q=qpq.$
\end{abstract}
\maketitle

\section{Motivation: monoids of projective modules}\label{S.motivation}
It is known that the abelian monoid of isomorphism classes
of finitely generated projective modules over a general ring is subject
to no nonobvious restrictions -- the obvious restrictions being
\begin{equation}\begin{minipage}[c]{35pc}\label{d.conical}
no two nonzero elements of the monoid have sum zero,
\end{minipage}\end{equation}
and
\begin{equation}\begin{minipage}[c]{35pc}\label{d.o-}
the monoid has an element $u$
such that every element is a summand in $nu$ for
some positive integer $n.$
\end{minipage}\end{equation}
(Namely, $u$ is the isomorphism class of the free module of rank~$1.)$

Indeed, every abelian monoid $M$ satisfying~\eqref{d.conical}
and~\eqref{d.o-} is known to be the monoid of
finitely generated projective modules of some hereditary
$\!k\!$-algebra, for any field $k.$
(This was proved for finitely generated $M$ in
\cite[Theorem~6.2]{cPu}, while Theorem~6.4 of that paper claimed
to show that if one weakened `hereditary'
to `semihereditary', the assumption that $M$ was finitely generated
could be dropped.
The argument indeed gave a $\!k\!$-algebra $R$ having $M$ as its monoid
of finitely generated projectives, but the proof that
$R$ was semihereditary was incorrect.
However, in \cite[Theorem~3.4]{u_deriv&} it is shown
that the $R$ so constructed
is not merely semihereditary, but hereditary.
For a similar result, see \cite[Corollary~4.5]{A+G3}.)

Recall that a ring $R$ is called {\em von~Neumann regular}
if every element $p\in R$ has an {\em inner inverse},
that is, an element $q\in R$ satisfying $pqp=p.$
The monoid of isomorphism classes of finitely generated
projective modules over a von~Neumann regular
ring is known to satisfy not only~\eqref{d.conical}
and~\eqref{d.o-}, but a strong additional restriction,
the {\em Riesz refinement property} \cite{separative}:
\begin{equation}\begin{minipage}[c]{35pc}\label{d.refinement}
If $A_0\oplus A_1\cong B_0\oplus B_1,$ then there exist
$C_{ij}$ $(i,j\in\{0,1\})$\\
such that
$A_i\cong C_{i0}\oplus C_{i1}$ and $B_i\cong C_{0i}\oplus C_{1i};$
\end{minipage}\end{equation}
that is, any such isomorphism $A_0\oplus A_1\cong B_0\oplus B_1$
can be written in the trivial form
\begin{equation}\begin{minipage}[c]{35pc}\label{d.trivO+}
$(C_{00}\oplus C_{01})\oplus (C_{10}\oplus C_{11})\ \cong
\ (C_{00}\oplus C_{10})\oplus (C_{01}\oplus C_{11}).$
\end{minipage}\end{equation}

Until a couple of decades ago, it was an open question
whether~\eqref{d.conical}-\eqref{d.refinement} completely
characterized the monoids of finitely
generated projectives of von~Neumann regular rings.
Then F.\,Wehrung \cite{FW_card}
constructed a monoid of cardinality $\aleph_2$
satisfying~\eqref{d.conical}-\eqref{d.refinement}
which cannot occur as such a monoid of projectives.
More recently, he has given an example of a {\em countable} monoid
satisfying~\eqref{d.conical}-\eqref{d.refinement}
which does not occur in this way
for any von~Neumann regular algebra over an
{\em uncountable} field \cite[\S4]{Ara}.
It remains open whether every {\em countable}
monoid satisfying~\eqref{d.conical}-\eqref{d.refinement}
is the monoid of finitely generated projectives
of {\em some} von~Neumann regular ring.

But there is in fact a strong condition, not implied
by~\eqref{d.conical}-\eqref{d.refinement}, which is not known to
fail in any von~Neumann regular ring:
\begin{equation}\begin{minipage}[c]{35pc}\label{d.separative}
$A\oplus A\cong A\oplus B\cong B\oplus B\ \implies\ A\cong B.$
\end{minipage}\end{equation}
An abelian monoid satisfying~\eqref{d.separative}.
is called {\em separative}.
A positive answer to the question of
whether the monoid of finitely generated projectives
of every von~Neumann regular ring is separative
would solve several other questions about
such rings \cite{separative}.
We remark that it is known \cite{FW}, \cite[\S4]{A+E}
that every monoid satisfying~\eqref{d.conical}
and \eqref{d.o-} can be embedded in one that also
satisfies~\eqref{d.refinement} (which
can be taken countable if the original monoid was).
Hence, applying this to monoids for which~\eqref{d.separative} fails,
one sees that there do exist abelian
monoids satisfying~\eqref{d.conical}-\eqref{d.refinement}
but not~\eqref{d.separative}.
For more on these questions, see \cite{Ara}, \cite{A+E},
\cite{A+G}, \cite{A+G2},~\cite{separative}.

Now it is known that many universal constructions
on $\!k\!$-algebras make
only ``obvious'' changes in the structure of the monoid of
finitely generated projectives \cite{cP}, \cite{cPu}, \cite{u_deriv&}.
This suggests that to investigate the possible structures of those
monoids for von~Neumann regular $\!k\!$-algebras,
we could start with a general $\!k\!$-algebra,
recursively adjoin universal inner inverses to its elements
till it becomes von~Neumann regular, and see what
conditions this process forces on the monoid of projectives.

That plan has not proved as easy as I hoped.
We obtain below normal forms for elements
of the $\!k\!$-algebra $R'=R\lang q\mid pqp=p\rang$ and for elements
of modules $M\otimes_R\,R';$ but it is not clear whether these can be
used to get useful results on isomorphism classes of modules.

The descriptions of the algebra $R'$ will show surprising differences,
depending on how near to invertible the element $p\in R$ to which we
adjoin a universal inner inverse is.
Below, we begin with a case that is challenging enough to
illustrate our method without being excessively difficult,
the case where $p$ is farthest from invertible,
namely, where $1\notin pR+Rp$~(\S\ref{S.norm}).
We then quickly cover the easy cases where $1\in pR$
and/or $1\in Rp,$ i.e., where $p$ is left or right invertible, or
both (\S\ref{S.1-sided}).
Finally, we treat the surprisingly difficult intermediate case
where $1\in pR+Rp,$ but $1\notin pR\cup Rp$~(\S\ref{S.1=:norm}).
We also examine the particular instance of this
construction where $R$ is the Weyl algebra~(\S\ref{S.Weyl}).
The last main results of the paper~(\S\ref{S.mutual})
concern a variant of the above constructions,
in which the pair of relations $pqp=p,$ $qpq=q,$ is used to
join together two given $\!k\!$-algebras.

For reasons to be noted in~\S\ref{S.enough?},
the difficult results of~\S\ref{S.1=:norm} (and the easy
results of~\S\ref{S.1-sided}) may be less useful
than the results of~\S\ref{S.norm}; so some readers may wish
to skip or skim them.
A list of the sections of this note containing the most important
material, in this light, along with some others, noted in
curly brackets, that are less essential but not very difficult, is:
\S\S\ref{S.defs}
\ref{S.norm} \{\ref{S.digress}\}
\ref{S.modules} \{\ref{S.enough?}
\ref{S.1-sided}\}
\ref{S.mutual} \{\ref{S.further}\}.

Incidentally, though, as noted above, there exist monoids
satisfying conditions~\eqref{d.conical}-\eqref{d.refinement}
but not condition~\eqref{d.separative},
no ``concrete'' examples of such monoids appear
to be known, but only constructions which obtain them by
starting with a monoid satisfying neither~\eqref{d.refinement}
nor~\eqref{d.separative}, universally
adjoining elements $C_{ij}$ as required by~\eqref{d.refinement},
and repeating this construction transfinitely -- i.e.,
the analog of the way non-separative von~Neumann regular rings might be
constructed if the plan suggested above is successful.
It would, of course, be of interest to have
concrete examples in both the monoid and the algebra situations.

I am indebted to P.\,Ara, T.\,Y.\,Lam, N.\,Nahlus, and, especially, to
K.\,O'Meara for helpful comments, corrections
and suggestions regarding this note.

\section{Generalities}\label{S.defs}
All rings will here be associative with unit; and
the rings for which we will study the construction of
universal inner inverses will be algebras over a fixed field $k.$
If $R$ is a nonzero $\!k\!$-algebra, we will identify
the $\!k\!$-subspace of $R$ spanned by $1$ with $k.$

I am using the term ``inner inverse'' (at the advice of
T.\,Y.\,Lam) for what I had previously known as a ``quasi-inverse'',
since the latter term also has a different, better-established sense.
(Elements $x$ and $y$ of a not necessarily unital
ring $R$ are called quasi-inverses in that sense
if $xy=yx=-x-y;$ in other words, if on adjoining a unit to $R,$ one
gets mutually inverse elements $1+x$ and $1+y.$
We will not consider that concept here.
On the other hand, the choice of the letter $q$ for universal inner
inverses below is based on my having used ``quasi-inverse'' in early
drafts of this note, while the element whose inner inverse we
are adjoining will be denoted $p$ because of the visual matching
of the shapes of these two letters.)

Note that if $R$ is a ring of endomorphisms of an abelian group $A,$
then an inner inverse of an element $p\in R$ is an endomorphism $q$ that
takes every member of the image of $p$ to some inverse image under $p$
of that element, with no restriction on what it does to elements
not in the image of $p.$
From this it is easy to show that in the algebra of endomorphisms of any
$\!k\!$-vector space, every element has an inner inverse; so such
algebras are examples of von~Neumann regular rings.

The relation ``is an inner inverse of'' is not symmetric:
if $q$ is an inner inverse of $p,$
$p$ need not be an inner inverse of $q.$
For example, any element of any ring is an inner inverse of
$0,$ but $0$ is not an inner inverse of any nonzero element.
However, if an element $p$ has an inner inverse $q,$ we find
that $q'=qpq$ is an inner inverse of $p$ such
that $p$ {\em is} an inner inverse of $q'.$
Thus, the condition that an element of a ring have an
inner inverse is equivalent to the condition that it
have a ``mutual inner inverse''.
Even when both relations $pqp=p$ and $qpq=q$ hold, however,
$p$ does not uniquely determine $q.$
For instance, in the ring $M_2(R)$ of $2\times 2$ matrices
over any ring $R,$ any two members of $\{e_{11}+r e_{12}\mid r\in R\}$
are inner inverses of one another.

Our normal form results for algebras constructed by adjoining universal
inner inverses will be proved using the ring-theoretic
version of the Diamond Lemma, as developed in \cite[\S1]{<>}.
However, where in \cite{<>} I formalized reduction rules
as ordered pairs $(W,f),$ with $W$
a word in our given generators, and $f$ a linear combination
of words, to be substituted for occurrences of $W$
as subwords of other words, I here use the more informal
notation ``$W\mapsto f$''.
(Another formulation of the Diamond Lemma
appears as \cite[Proposition~1]{LB}.
Bokut' \cite{LB+2}, \cite{LB+PK} refers to it as ``the method of
Gr\"{o}bner-Shirshov bases''.)

Given a $\!k\!$-algebra $R$ and an element $p\in R,$ our construction
of a normal form for elements of $R\lang q\mid pqp=p\rang$ will
start with a $\!k\!$-basis for $R,$ which we shall
want to choose in a way that allows us
to see which elements of $R$ are left and/or right multiples of $p.$
In describing such a basis, it will be convenient to use

\begin{definition}\label{D.basis_rel}
If $U\subseteq V$ are $\!k\!$-vector-spaces, then a
{\em $\!k\!$-basis of $V$ relative to $U$} will mean a
subset $B\subseteq V$ with the property that every element of
$V$ can be written uniquely as the sum of an
element of $U$ and a $\!k\!$-linear combination of elements of $B.$
\end{definition}

Thus, the general basis of $V$ relative to $U$ can
be obtained by choosing a basis $B'$ of $V/U,$ and selecting
one inverse image in $V$ of each element of $B';$
or, alternatively, by choosing any direct-sum complement
to $U$ in $V,$ and taking a basis of that complement.
Clearly, the union of any $\!k\!$-basis of $U$ and any
$\!k\!$-basis of $V$ relative to $U$ is a $\!k\!$-basis of $V.$

\begin{lemma}\label{L.V_1+V_2}
Suppose $V_1,$ $V_2$ are subspaces of a vector space $W,$
and let $B_0$ be a basis of $V_1\cap V_2,$
$B_1$ a basis of $V_1$ relative $V_1\cap V_2,$ and
$B_2$ a basis of $V_2$ relative $V_1\cap V_2.$

Then $B_0,$ $B_1,$ and $B_2$ are disjoint, and their union
is a basis of $V_1+V_2.$
\textup{(}Hence $B_1$ is also a basis of $V_1+V_2$ relative
to $V_2,$ and $B_2$ a basis of $V_1+V_2$ relative to $V_1.)$

Hence if, further, $B_3$ is a basis of $W$ relative
to $V_1+V_2,$ then $B_0\cup B_1\cup B_2\cup B_3$ is
a basis of $W.$
\end{lemma}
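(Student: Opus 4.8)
The plan is to translate the relative-basis hypotheses into direct-sum decompositions and then verify the directness of a triple sum. Writing $U=V_1\cap V_2$, the uniqueness clause in Definition~\ref{D.basis_rel} says precisely that each $B_i$ is linearly independent and that its span $W_i=\mathrm{span}(B_i)$ satisfies $V_i=U\oplus W_i$ (internal direct sum) for $i=1,2$, while $B_0$ is an ordinary basis of $U$. Thus the entire lemma reduces to the linear-algebra assertion that the sum $U+W_1+W_2$ is direct and equals $V_1+V_2$.

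First I would note $V_1+V_2=(U+W_1)+(U+W_2)=U+W_1+W_2$, so the union $B_0\cup B_1\cup B_2$ spans $V_1+V_2$. The main point is to show the sum is direct. Suppose $u+w_1+w_2=0$ with $u\in U$ and $w_i\in W_i$. The key step---where the hypothesis $V_1\cap V_2=U$ (rather than merely $\supseteq$) is used---is to observe that $w_1=-(u+w_2)\in V_2$, while also $w_1\in W_1\subseteq V_1$; hence $w_1\in V_1\cap V_2=U$, and since $W_1\cap U=0$ we get $w_1=0$. Then $w_2=-u\in U\cap W_2=0$, whence $u=0$ as well. This directness yields simultaneously that $B_0\cup B_1\cup B_2$ is a basis of $V_1+V_2$ and that the three sets are pairwise disjoint, since a common element of two of them would be a nonzero vector lying in the intersection of two of the subspaces $U,W_1,W_2$.

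The parenthetical claims then follow by regrouping the direct sum: from $V_1+V_2=U\oplus W_1\oplus W_2$ and $V_2=U\oplus W_2$ one reads off $V_1+V_2=V_2\oplus W_1$, which is exactly the statement that $B_1$ is a basis of $V_1+V_2$ relative to $V_2$; the symmetric regrouping $V_1+V_2=V_1\oplus W_2$ handles $B_2$. Finally, the concluding assertion is immediate from what has been proved together with the remark preceding the lemma---that the union of a basis of a subspace $U$ and a basis of $V$ relative to $U$ is a basis of $V$: applying it with $U$ replaced by $V_1+V_2$ and $V$ by $W$, the union of the established basis $B_0\cup B_1\cup B_2$ of $V_1+V_2$ with the relative basis $B_3$ of $W$ is a basis of $W$.

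I expect the only genuine obstacle to be the directness argument in the second paragraph; once the equality $V_1\cap V_2=U$ is exploited there, every remaining step is routine bookkeeping with direct sums.
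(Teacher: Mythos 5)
Your proof is correct, but it takes a more hands-on route than the paper's. The paper transfers the relative basis across the second isomorphism theorem: since $V_2/(V_1\cap V_2)\cong (V_1+V_2)/V_1$, the set $B_2$ is automatically a basis of $V_1+V_2$ relative to $V_1$, and unioning it with the basis $B_0\cup B_1$ of $V_1$ finishes the job in one line (the disjointness being dismissed as immediate). You instead unwind Definition~\ref{D.basis_rel} into internal direct sums $V_i=U\oplus W_i$ and verify by hand that $U+W_1+W_2$ is direct, which is in effect a proof of that isomorphism specialized to the situation at hand. What your version buys is explicitness: the directness computation shows exactly where the hypothesis $V_1\cap V_2=U$ (as opposed to mere containment) is used, and it delivers the disjointness of $B_0,B_1,B_2$ as a byproduct rather than an assertion. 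What the paper's version buys is brevity and the fact that the parenthetical claim ($B_2$ is a basis of $V_1+V_2$ relative to $V_1$) appears as an intermediate step rather than needing the separate regrouping $V_1+V_2=V_1\oplus W_2$ that you supply. All the details in your argument check out, including the observation that uniqueness in the definition of relative basis forces $B_i$ to be linearly independent with span meeting $U$ trivially, and the concluding application of the remark preceding the lemma is exactly what the paper intends by ``the final assertion is then immediate.''
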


\begin{proof}
The disjointness of $B_0,$ $B_1,$ and $B_2$ is immediate.
The fact that $B_2$ is a basis of $V_2$ relative $V_1\cap V_2$
means that its image in $V_2/(V_1\cap V_2)$ is a basis thereof.
But $V_2/(V_1\cap V_2)\cong (V_1+V_2)/V_1,$ so $B_2$ is also
a basis of $V_1+V_2$ relative to $V_1,$ hence its union with
the basis $B_0\cup B_1$ of $V_1$ is a basis of $V_1+V_2,$
giving the first assertion, and, in the
process, the parenthetical note that follows it.
The final assertion is then immediate.
\end{proof}

\section{A normal form for $R\lang q\mid pqp=p\rang$ when $1\notin pR+Rp.$}\label{S.norm}

Here is the situation we will consider first:
\vspace{.4em}
\begin{equation}\begin{minipage}[c]{35pc}\label{d.1_notin_Rp+pR}
In this section, $R$ will be a $\!k\!$-algebra,
and $p$ a fixed element of $R$ such that $1\notin pR+Rp.$
(So in particular, $R$ is nonzero.)
\end{minipage}\end{equation}

Under this assumption,
I claim we can take a $\!k\!$-basis of $R$ of the form
\begin{equation}\begin{minipage}[c]{35pc}\label{d.B=}
$B\cup\{1\}\ =\ B_{++}\cup B_{+-} \cup B_{-+}\cup B_{--}\cup \{1\},$
\end{minipage}\end{equation}
where
\begin{equation}\begin{minipage}[c]{35pc}\label{d.B_}
$B_{++}$ is any $\!k\!$-basis of $pR\cap Rp$ which,
if $p\neq 0,$ contains $p,$

$B_{+-}$ is any $\!k\!$-basis of $pR$ relative to $pR\cap Rp,$

$B_{-+}$ is any $\!k\!$-basis of $Rp$ relative to $pR\cap Rp,$

$B_{--}$ is any $\!k\!$-basis of $R$ relative to $pR+Rp+k.$
\end{minipage}\end{equation}
(Mnemonic: a $+$ on the left signals left divisibility
by $p,$ a $+$ on the right, right divisibility.)

Indeed, let $B_{++},$ $B_{+-},$ $B_{-+},$ $B_{--}$ be sets
as in~\eqref{d.B_}.
By Lemma~\ref{L.V_1+V_2}, $B_{++}\cup B_{+-} \cup B_{-+}$
will be a $\!k\!$-basis of $pR+Rp.$
By assumption, $1\notin pR+Rp,$ so
$B_{++}\cup B_{+-} \cup B_{-+}\cup\{1\}$ is a
$\!k\!$-basis of $pR+Rp+k.$
Hence bringing in the $\!k\!$-basis $B_{--}$ of $R$ relative
to that subspace gives us a $\!k\!$-basis of $R.$

Below, we will typically denote an element of $B$ by a
letter such as $x.$
However, when such an element is specified as belonging to
$B_{++}\cup B_{+-}$ (respectively, to $B_{++}\cup B_{-+}),$
we shall often find it useful to write it in a form such as $px$
(respectively, $xp).$
Note that if $p$ is a zero-divisor in $R,$ the $x$ in
such an expression will not be uniquely determined.
We could assume one such representation fixed for each
member of $B_{++}\cup B_{+-},$ but we shall
not find this necessary; rather, the uses to which we shall
put such expressions will not depend on the choice of $x.$
In particular, note that given elements $xp\in Rp$ and $py\in pR,$ the
value of $xpy$ depends only on the elements $xp$ and $py,$ not
on the choices of $x$ and $y.$
For if $xp=x'p$ and $py=py',$ then $xpy=x'py=x'py'.$

In the case of elements specified as belonging
to $B_{++},$ we will often use three representations,
$x=x'p=\nolinebreak px''.$

The construction of a normal form for $R\lang q\mid pqp=p\rang$
in this section, and of similar normal forms in
subsequent sections, involves considerations
both of {\em elements} of $\!k\!$-algebras, and of
{\em expressions} for such elements.
We shall tread the thin line between ambiguity and
cumbersome notation by making

\begin{convention}\label{Cv.exprs}
Throughout this note, when we consider a $\!k\!$-algebra $S$
generated by a set $G,$ an {\em expression} for an element $s\in S$
will mean an element of the
free $\!k\!$-algebra $k\lang G\rang$ which maps to $s$
under the natural homomorphism $k\lang G\rang\to S.$
A {\em word} or {\em monomial} will mean a
member of the free monoid generated by $G$ in $k\lang G\rang.$
Thus, in descriptions of reductions $W\mapsto f,$
the word $W$ and the expression $f$ are
understood to lie in $k\lang G\rang.$

A family of words will be said to {\em give a $\!k\!$-basis
for $S$} if the $\!k\!$-subspace of $k\lang G\rang$ spanned by that
family maps bijectively to $S$ under the above natural homomorphism;
in other words, if the family is mapped one-to-one
into $S,$ and its image is a $\!k\!$-basis of $S.$

We shall use the same symbols for elements of $k\lang G\rang$
and their images in $S,$ distinguishing these by context: in
descriptions of normal forms and reductions, our symbols
will denote elements of $k\lang G\rang,$ while
in statements that a {\em relation} holds in $S,$ they
will denote elements of $S.$
\end{convention}

In the situation at hand, the outputs of our reductions
for $R\lang q\mid pqp=p\rang$ will often have to be
expressed in terms of the operations of $R.$
For this purpose, we make the notational convention that
for any $\!k\!$-algebra expression $f$ for an element
of $R,$ we shall write $f_R$ for the unique $\!k\!$-linear combination
of elements of $B\cup\{1\}$ which gives the value of $f$ in $R.$
(Thus, when we come to reductions~\eqref{d.xy|->} and~\eqref{d.xpqpy|->}
below, the inputs will be words of lengths~$2$ and $3$
respectively, while the outputs, by this notational convention, are
$\!k\!$-linear combinations of words of lengths~$\leq 1.)$

Note also that since the monomials that span the free algebra
$k\lang B\cup\{q\}\rang$
include the empty word $1,$ and none of the reductions we will give
has $1$ as its input, $1$ will belong to the $\!k\!$-basis
described in the theorem.

We can now state and prove our normal form.

\begin{theorem}\label{T.1_notin}
Let $R$ be a $\!k\!$-algebra, $p$ an element of $R$
such that $1\notin pR+Rp,$ $B\cup\{1\}$
a $\!k\!$-basis of $R$ as in~\eqref{d.B=} and~\eqref{d.B_} above, and
\begin{equation}\begin{minipage}[c]{35pc}\label{d.R'}
$R'\ =\ R\,\lang q\mid pqp=p\rang,$
\end{minipage}\end{equation}
the $\!k\!$-algebra gotten by adjoining to $R$ a universal
inner inverse $q$ to $p.$

Then $R'$ has a $\!k\!$-basis given by the set of those
words in the generating set $B\cup\{q\}$ that contain no
subwords of the form
\begin{equation}\begin{minipage}[c]{35pc}\label{d.xy}
$xy$ \quad with $x,y\in B$
\end{minipage}\end{equation}
nor
\begin{equation}\begin{minipage}[c]{35pc}\label{d.xpqpy}
$(xp)\,q\,(py)$ \quad with $xp\in B_{++}\cup B_{-+}$
and $py\in B_{++}\cup B_{+-}\,.$
\end{minipage}\end{equation}

The reduction to the above normal
form may be accomplished by the systems of reductions
\begin{equation}\begin{minipage}[c]{35pc}\label{d.xy|->}
$xy\ \mapsto\ (xy)_R$ \quad for all $x,y\in B,$
\end{minipage}\end{equation}
and
\begin{equation}\begin{minipage}[c]{35pc}\label{d.xpqpy|->}
$(xp)\,q\,(py)\ \mapsto\ (xpy)_R$ \quad for all
$xp\in B_{++}\cup B_{-+},$ $py\in B_{++}\cup B_{+-}\,.$
\end{minipage}\end{equation}
\end{theorem}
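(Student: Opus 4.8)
The plan is to apply the Diamond Lemma of \cite[\S1]{<>} to the generating set $B\cup\{q\}$ with the reduction system consisting of \eqref{d.xy|->} and \eqref{d.xpqpy|->}. Recall that the Diamond Lemma asserts that a reduction system yields a $\!k\!$-basis consisting of the irreducible (reduced) words, provided (i) there is a partial order on words compatible with the reductions under which the system is terminating (so that reduction halts), and (ii) all ambiguities are resolvable. The irreducible words for our system are exactly those containing no subword of the form \eqref{d.xy} and no subword of the form \eqref{d.xpqpy}, matching the claimed basis; so the content of the theorem is precisely that these two hypotheses hold.

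First I would fix a well-ordering on words making reduction terminating. The reductions in \eqref{d.xy|->} send a length-$2$ word to a $\!k\!$-linear combination of words of length $\leq 1$, strictly decreasing length; the reductions in \eqref{d.xpqpy|->} send a length-$3$ word to a combination of words of length $\leq 1$ (again strictly shorter). Since every output is strictly shorter than its input, the most natural choice is to order words first by length and refine arbitrarily (say by a length-lexicographic order on the alphabet $B\cup\{q\}$); then each reduction replaces a word by a $\!k\!$-linear combination of strictly smaller words, so the order is a reduction-compatible partial order satisfying the descending chain condition, giving termination. I would state this and verify that both reduction families are length-reducing, which is immediate from Convention~\ref{Cv.exprs} and the remark that $(xy)_R$ and $(xpy)_R$ are combinations of words of length $\leq 1$.

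The heart of the proof, and the expected main obstacle, is checking that every ambiguity is resolvable. There are two kinds: \emph{overlap} ambiguities, where two reduction left-hand sides share a nonempty common factor inside a single word, and \emph{inclusion} ambiguities, where one left-hand side is a subword of another (here none occur, since the left sides of \eqref{d.xy|->} have length $2$, those of \eqref{d.xpqpy|->} have length $3$, and a length-$2$ word $xy$ with $x,y\in B$ is never a factor of $(xp)\,q\,(py)$ because the latter contains $q$). For the overlap ambiguities I would enumerate the ways two left-hand sides can overlap and reduce each resulting word by the two competing routes, checking the outcomes agree in $R'$. The relevant overlaps are: (a) two \eqref{d.xy|->}-reductions overlapping in a length-$3$ word $xyz$ ($x,y,z\in B$); (b) a \eqref{d.xy|->} overlapping a \eqref{d.xpqpy|->} at the left end, in a word $x'(xp)\,q\,(py)$ where $x'(xp)$ triggers \eqref{d.xy|->} (noting $xp\in B_{++}\cup B_{-+}\subseteq B$); (c) symmetrically at the right end, $(xp)\,q\,(py)y'$; (d) possibly a \eqref{d.xpqpy|->} overlapping a \eqref{d.xpqpy|->}, but this cannot happen since a nontrivial overlap of two copies of $(\cdot p)\,q\,(p\cdot)$ would force two $q$'s to coincide, forcing $py=xp$ as the shared middle letter — but the letter between two $q$'s would have to be simultaneously a right-end $py$ and a left-end $xp$, and the overlap would need length $\geq 4$ containing two $q$'s, which no single left-hand side does. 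For case (a), associativity of the multiplication $(xy)_R$ in $R$ makes both routes compute the product $xyz$ in $R$, so they agree. For cases (b) and (c) the key computation uses the defining relation $pqp=p$ together with the well-definedness remark preceding the theorem: in (b), reducing $x'(xp)$ first gives $(x'xp)_R$, a combination of basis elements, after which one must re-express the resulting word so that the $pq(py)$ pattern can fire, ultimately matching the route that fires \eqref{d.xpqpy|->} first and then reduces; the equality of the two is an identity in $R'$ following from $pqp=p$ and the fact that $xpy$ depends only on $xp$ and $py$. The delicate point — and where I expect most of the work to lie — is confirming in cases (b) and (c) that after the first reduction produces a $\!k\!$-linear combination of (possibly reducible) words, the subsequent reductions of those words reconverge with the other route; this requires tracking how $(x'xp)_R$ decomposes along the basis $B_{++}\cup B_{+-}\cup B_{-+}\cup B_{--}\cup\{1\}$ and using that only the $pR$-components $B_{++}\cup B_{+-}$ interact with the trailing $q\,(py)$ via \eqref{d.xpqpy|->}, while the other components and the scalar term from $1$ are handled by length reduction. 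Once all overlaps are shown resolvable, the Diamond Lemma yields that the reduced words form a $\!k\!$-basis of $R'$, completing the proof.
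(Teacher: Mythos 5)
There is a genuine gap in your case analysis of the ambiguities: your case (d) is wrongly dismissed. Two instances of \eqref{d.xpqpy|->} \emph{can} overlap, namely in a single shared letter: if the rightmost letter $py_1$ of one input word coincides with the leftmost letter $x_2p$ of another, one gets the ambiguously reducible word $(xp)\,q\,y\,q\,(pz)$ where the middle letter $y$ lies in $B_{++}$ (i.e.\ $y\in pR\cap Rp,$ so it can be written both as $py'$ and as $y''p).$ This is not hypothetical: since $B_{++}$ contains $p$ itself, the word $p\,q\,p\,q\,p$ is already of this form. Your argument that ``the overlap would need length $\geq 4$ containing two $q$'s'' confuses the overlapping \emph{segment} (here a single letter of $B_{++})$ with the resulting ambiguous \emph{word} (which has length $5$ and two $q$'s, and is not itself required to be the input of any reduction). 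This overlap is precisely the paper's case \eqref{d.xpqyqpz}, and it is the most substantive of the four: resolving it requires using both representations $y=py'=y''p,$ reducing the left-hand triple to $(xpy')_R=(xy''p)_R,$ observing that this element of $Rp$ expands over $B_{++}\cup B_{-+}$ only (so that every resulting term can fire against the trailing $q\,(pz)),$ and arriving at $(xyz)_R$ from either side. Omitting it leaves the Diamond Lemma hypothesis unverified.

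A secondary, smaller issue: in your cases (b) and (c) you say that the components of $(x'xp)_R$ outside $B_{++}\cup B_{+-}$ ``and the scalar term from $1$ are handled by length reduction.'' That is not how the reconvergence works, and the set is also the wrong one: for the trailing $q\,(py)$ to fire, the factor to its left must lie in $B_{++}\cup B_{-+}$ (the basis of $Rp),$ and the point is that \emph{all} components of $(x'xp)_R$ lie there, because $x'xp\in Rp$ and $B_{++}\cup B_{-+}$ is a basis of $Rp$ inside the chosen basis of $R.$ If any component landed in $B_{+-}\cup B_{--}\cup\{1\},$ the corresponding term would be irreducible (or reduce differently) and the two routes would \emph{not} reconverge; so this is where the careful choice of basis in \eqref{d.B_} (and the hypothesis $1\notin pR+Rp)$ is actually doing its work. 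Otherwise your framework --- Diamond Lemma, length ordering, absence of inclusion ambiguities, and the resolutions of \eqref{d.xyz}, \eqref{d.xpqpyz}, \eqref{d.xypqpz} --- matches the paper's proof.
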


\begin{proof}
Clearly, $R'$ is generated as a $\!k\!$-algebra by
$B\cup\{q\},$ and we see that the relations
\begin{equation}\begin{minipage}[c]{35pc}\label{d.xy=}
$xy\ =\ (xy)_R$ \quad for $x,y$ as in~\eqref{d.xy|->}
\end{minipage}\end{equation}
and
\begin{equation}\begin{minipage}[c]{35pc}\label{d.xpqpy=}
$(xp)\,q\,(py)\ =\ (xpy)_R$ \quad for $xp,\ py$ as in~\eqref{d.xpqpy|->}
\end{minipage}\end{equation}
do hold in $R'.$
Moreover, these relations are sufficient to
define $R'$ in terms of our generators.
Indeed the relations~\eqref{d.xy=} constitute a presentation of $R;$
to get the additional relation $pqp=p$ of~\eqref{d.R'},
note that if $p=0$ this is vacuous, while if $p\neq 0,$ it is
the case of~\eqref{d.xpqpy=} where $xp=p=py.$

Since~\eqref{d.xy=} and~\eqref{d.xpqpy=} give
a presentation of $R',$ the statement of the
Diamond Lemma in \cite[Theorem~1.2]{<>} tells us that
the reductions~\eqref{d.xy|->} and~\eqref{d.xpqpy|->} will yield
a normal form for $R'$ if, first of
all, they satisfy an appropriate condition
guaranteeing that repeated applications of these reductions to
any expression eventually
terminate, and if, moreover, every ``ambiguity'',
in the sense of \cite{<>}, is ``resolvable''.

The first of these conditions is immediate, since each of our reductions
replaces a word by a linear combination of shorter words;
so the partial ordering on the set of all words which
makes shorter words ``$<$'' longer words, and distinct words of equal
length incomparable, is, in the language of \cite{<>}, a semigroup
partial ordering that is compatible with our reduction system,
and has descending chain condition.

To show that all ambiguities are
resolvable, we note that there are four
sorts of ambiguously reducible words (notation explained below):
\begin{equation}\begin{minipage}[c]{35pc}\label{d.xyz}
$x\cdot y\cdot z,$ \quad where $x,y,z\in B,$
\end{minipage}\end{equation}
\begin{equation}\begin{minipage}[c]{35pc}\label{d.xpqpyz}
$(xp)\,q \cdot (py)\cdot z,$ \quad where
$xp\in B_{++}\cup B_{-+},$
$py\in B_{++}\cup B_{+-},$
$z\in B,$
\end{minipage}\end{equation}
\begin{equation}\begin{minipage}[c]{35pc}\label{d.xypqpz}
$x\cdot (yp) \cdot q\,(pz),$ \quad where
$x\in B,$
$yp\in B_{++}\cup B_{-+},$
$pz\in B_{++}\cup B_{+-},$\quad and
\end{minipage}\end{equation}
\begin{equation}\begin{minipage}[c]{35pc}\label{d.xpqyqpz}
$(xp)\,q \cdot y\cdot q\,(pz),$ \quad where
$xp\in B_{++}\cup B_{-+},$
$y=py'=y''p\in B_{++},$
$pz\in B_{++}\cup B_{+-}\,.$
\end{minipage}\end{equation}

In each of these words, I have placed dots so as
to indicate the two competing reductions applicable to the word in
question, namely, the application of one of the
reductions~\eqref{d.xy|->} or~\eqref{d.xpqpy|->}
to the product of the two strings of generators surrounding
the first dot, and the application of another such reduction
to the product of the two strings surrounding the second dot.
For example, in~\eqref{d.xpqpyz} we can either reduce
$(xp)\,q\,(py)$ using~\eqref{d.xpqpy|->}, or reduce
$(py)z$ using~\eqref{d.xy|->}.

In each case, each of our two competing reductions will,
as noted, turn the indicated expression into
a $\!k\!$-linear combination of shorter words.
Most of these new words are in turn subject to a second reduction.
(The exceptions are those that arise from an occurrence of
the empty word, $1,$ in the output of the first reduction.)
I claim that for each of \eqref{d.xyz}-\eqref{d.xpqyqpz},
after these reductions are complete,
the two resulting expressions are equal;
namely, that we get $(xyz)_R,$ $(xpyz)_R,$ $(xypz)_R$ and
$(xyz)_R,$ respectively.

I will show this first, in detail,
for the simplest case,~\eqref{d.xyz},
then in outline for the most complicated case,~\eqref{d.xpqyqpz},
then note briefly what happens in the intermediate
cases~\eqref{d.xpqpyz} and~\eqref{d.xypqpz}.

In the case of~\eqref{d.xyz}, let
\begin{equation}\begin{minipage}[c]{35pc}\label{d.xy_R}
$(xy)_R\ =\ \sum_{u\in B\cup\{1\}} \alpha_u u$ $(\alpha_u\in k).$
\end{minipage}\end{equation}
Thus, the result of the ``left-hand'' reduction of
$x\cdot y\cdot z$ is $\sum_{u\in B\cup\{1\}} \alpha_u u\,z.$
Now for $u=1,$ the empty string, we have $uz=z,$
which we can write $(uz)_R,$
while for all other $u,$ the string $uz$ can be reduced to
$(uz)_R$ by an application of~\eqref{d.xy|->}.
Hence the expression $\sum \alpha_u u\,z$ can be reduced
using~\eqref{d.xy|->} to
$\sum \alpha_u (uz)_R=(\sum \alpha_u uz)_R,$
which by~\eqref{d.xy_R} equals $(xyz)_R,$ as claimed.
By symmetry, the calculation beginning with the right-hand
reduction of $x\cdot y\cdot z$ likewise yields
$(xyz)_R,$ showing that, in the language of \cite{<>},
the ambiguity corresponding to~\eqref{d.xyz} is resolvable.

Let us now look at the case of~\eqref{d.xpqyqpz},
but without explicitly writing expressions $f_R$ as
linear combinations of basis elements,
merely understanding that they represent
such linear combinations, and that the analogs of the
reductions~\eqref{d.xy|->} and~\eqref{d.xpqpy|->}
for such linear combinations can be achieved by applying~\eqref{d.xy|->}
or~\eqref{d.xpqpy|->} respectively
to each word in the linear expression.

Writing $y$ in~\eqref{d.xpqyqpz} as $py',$ we see that the result of
applying~\eqref{d.xpqpy|->} to $(xp)\,q\,(py')$ is $(xpy')_R,$
so the left-hand reduction of
$(xp)\,q\,y\,q\,(pz)$ gives $(xpy')_R\,q\,(pz).$
Using now the fact that in~\eqref{d.xpqyqpz},
$py'=y''p,$ we can rewrite this as $(xy''p)_R\,q\,(pz).$
Since $xy''p$ is right-divisible by $p,$
$(xy''p)_R$ is a $\!k\!$-linear combination of elements
of $B_{++}\cup B_{-+},$ so we can
apply~\eqref{d.xpqpy|->} to each term of this expression,
and get $(xy''pz)_R,$ in other words, $(xyz)_R.$
Again, by symmetry the calculation beginning with the right-hand
reduction gives the same result.

The cases~\eqref{d.xpqpyz} and~\eqref{d.xypqpz} combine
features of the above two.
In the former, for instance, the reader is invited to verify
that whether we begin with the
reduction of $(xp)\,q\,(py)$ or of $(py)\,z,$ a following application
of reductions of the other sort brings us to the common
answer $(xpyz)_R.$
In this case, the two parts of
the verification are not left-right dual to each other;
rather, the verification of~\eqref{d.xpqpyz} is left-right dual
to that of~\eqref{d.xypqpz}.

Since all our ambiguities are resolvable,
\cite[Theorem~1.2]{<>} tell us that the words in $B$
which do not have as subwords any words appearing as inputs
of reductions~\eqref{d.xy|->} or~\eqref{d.xpqpy|->}
form a $\!k\!$-basis of $R',$ as claimed.
\end{proof}

\section{A digression on algebras over non-fields.}\label{S.digress}

An immediate consequence of the above theorem is that $R$ can
be embedded in a $\!k\!$-algebra in which $p$ has an inner inverse.
However, this can be more easily seen from the fact that
$R$ embeds in the algebra of all endomorphisms of its underlying
$\!k\!$-vector-space, which is von~Neumann regular.

On the other hand, letting $K$ be a general commutative ring
(so as not to violate our convention that $k$ denotes a field),
a $\!K\!$-algebra $R$ with a specified element $p$ need not be
embeddable in a $\!K\!$-algebra in which $p$ has an inner inverse.
For instance, if $K$ is an integral domain,
$p$ a nonzero nonunit of $K,$ and $R=K/(p^2),$
we see that in $R,$ the image of $p$ is nonzero, but if
an inner inverse $q$ to $p$ is adjoined, then since $p\in K$
must remain central, we get $p=p\,q\,p=p^2q=0$ in $R'.$
The following result (which will not be used in the
sequel) shows, inter alia,
that for $K$ a general commutative ring, such problems
occur if and only if $K$ is not itself von~Neumann regular.

\begin{proposition}\label{P.KvN}
For $K$ a commutative ring, the following conditions are
equivalent.\\[.5em]
\textup{(a)} \ $K$ is von~Neumann regular.\\[.5em]
\textup{(b)} \ Every $\!K\!$-algebra $R$ can be embedded in
a von~Neumann regular $\!K\!$-algebra.\\[.5em]
\textup{(c)} \ For every ideal $I$ of $K$ and element $p\in K/I,$
the $\!K\!$-algebra $K/I$ can be
embedded in a $\!K\!$-algebra in which $p$ has
an inner inverse.\\[.5em]
\textup{(d)} \ For every $\!K\!$-module $M$ and nonzero $x\in M,$
one has $x\notin PM$ for some maximal ideal $P$ of $K.$
\end{proposition}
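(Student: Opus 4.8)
The plan is to prove Proposition~\ref{P.KvN} by establishing a cycle of implications, say $(a)\Rightarrow(b)\Rightarrow(c)\Rightarrow(a)$, together with the equivalence $(a)\Leftrightarrow(d)$, which is the purely module-theoretic characterization of von~Neumann regularity for commutative rings.

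First I would dispose of $(a)\Rightarrow(b)$. Recall the standard fact that a commutative ring $K$ is von~Neumann regular if and only if $K_P$ is a field for every maximal (equivalently, prime) ideal $P$, equivalently if and only if $K$ is reduced of Krull dimension zero. Given a $\!K\!$-algebra $R$, the plan is to embed $R$ into its $\!K\!$-algebra of endomorphisms of the underlying $\!K\!$-module---or, to guarantee injectivity of the embedding into an endomorphism ring, into $\mathrm{End}_K(R\oplus K)$ acting by left multiplication, which is faithful because $R$ is unital. The key point is then that when $K$ is von~Neumann regular, the endomorphism ring of any $\!K\!$-module (or at least a suitable overring) is again von~Neumann regular; this should follow by working locally, since over each field $K_P$ the localized module is free and its endomorphism ring is a full linear ring, which is von~Neumann regular, and regularity is a local property in this commutative-base setting. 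This localization argument is where I expect to spend the most care.

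Next, $(b)\Rightarrow(c)$ is immediate: condition (c) only asks for embeddability of the particular $\!K\!$-algebras $K/I$ into $\!K\!$-algebras where a single element acquires an inner inverse, and any von~Neumann regular overalgebra supplies inner inverses to all its elements, in particular to the image of $p$. For $(c)\Rightarrow(a)$, I would argue contrapositively, generalizing the motivating example $R=K/(p^2)$ from the preceding paragraph of the excerpt. If $K$ is not von~Neumann regular, choose an element witnessing this---say $p\in K$ with $p\notin p^2K$, which exists precisely when $K$ is not regular---and set $I=\{a\in K\mid ap\in p^2 K\}$ or a similar annihilator-type ideal, so that in $K/I$ the image $\bar p$ is nonzero while $\bar p$ lies in the right configuration to force a contradiction. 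The crux is that in any $\!K\!$-algebra $R'\supseteq K/I$ with an inner inverse $q$ to $\bar p$, centrality of $\bar p\in K/I$ gives $\bar p=\bar p\,q\,\bar p=\bar p^2 q$, and I must choose $I$ so that this relation forces $\bar p=0$, contradicting embeddability. Getting the ideal $I$ right so that $\bar p^2 q=\bar p$ collapses $\bar p$ to zero is the main obstacle in this direction.

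Finally, for $(a)\Leftrightarrow(d)$, the plan is to recognize (d) as saying that the intersection of $PM$ over all maximal ideals $P$ is zero for every module $M$, i.e.\ that modules have no ``everywhere-local-torsion''. I would prove $(a)\Rightarrow(d)$ by localizing: if $x\neq 0$ in $M$, then $x$ survives in some localization $M_P$ (as $M\hookrightarrow\prod_P M_P$ when... well, at least $x$ maps to something nonzero in some $M_P$), and over the field $K_P$ one has $PM_P=0$, so $x\notin PM$. For the converse $(d)\Rightarrow(a)$, I would apply (d) to a cyclic module such as $M=K/p^2K$ with a carefully chosen nonzero $x$ to force, for some maximal $P$, a relation exhibiting $p$ as a unit times a power modulo $P$, and then assemble these local invertibility statements into the global condition $p\in p^2K$ that characterizes regularity. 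The main subtlety throughout $(a)\Leftrightarrow(d)$ is that exactness and the local-global principle must be invoked only in forms valid over a general commutative base, so I would lean on the fact that a sequence of $\!K\!$-modules is exact iff it is exact after localizing at every maximal ideal.
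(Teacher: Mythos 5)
Your step (a)$\Rightarrow$(b) has a genuine gap, and it sits exactly where you said you expected to spend the most care. The lemma you need --- that over a commutative von~Neumann regular $K$ the endomorphism ring of an arbitrary $K$-module is again von~Neumann regular --- is false, even for modules of the form $R\oplus K$ that your construction uses. Let $K$ be the ring of eventually constant sequences in $\prod_{n}\mathbb{F}_2$ (von~Neumann regular, since every element is idempotent), let $I=\bigoplus_n\mathbb{F}_2,$ and let $R=K/I\cong\mathbb{F}_2.$ Then $\mathrm{Hom}_K(K/I,K)=\{x\in K\mid Ix=0\}=0,$ so $\mathrm{End}_K(R\oplus K)$ is a triangular matrix ring whose off-diagonal corner $\mathrm{Hom}_K(K,K/I)\cong K/I\neq 0$ is a nonzero two-sided ideal with square zero; a von~Neumann regular ring has no such ideal (if $N^2=0$ and $x=xyx$ then $x\in NRN=0$). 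The localization strategy cannot repair this: $\mathrm{End}_K(M)$ commutes with localization only for finitely presented $M,$ and the modules underlying the algebras $R$ you must handle are in general not finitely presented, so regularity of each $\mathrm{End}_{K_P}(M_P)$ says nothing about $\mathrm{End}_K(M).$ The paper's route around this is to prove (a)$\Rightarrow$(d) first and then derive (b) from (d): for each nonzero $x\in R,$ condition (d) yields a maximal ideal $P$ with $x\notin PR,$ so $x$ survives in $R/PR,$ an algebra over the \emph{field} $K/P,$ which embeds in the (regular) endomorphism ring of its underlying $K/P$-vector space; $R$ then embeds in the product of these regular algebras over all nonzero $x.$ Reduction modulo maximal ideals, rather than localization of endomorphism rings, is the mechanism that works.

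Two further remarks. In (c)$\Rightarrow$(a), your candidate $I=\{a\in K\mid ap\in p^2K\}$ contains $p$ itself, so $\bar p=0$ in $K/I$ and the argument is vacuous; the right choice, used in the paper, is simply $I=p^2K$ for an \emph{arbitrary} $p\in K$: then $\bar p=\bar p\,q\,\bar p=\bar p^{\,2}q=0$ forces $p\in p^2K,$ i.e.\ $p=p^2q=pqp,$ with no contrapositive and no search for a witness needed. Your treatment of (a)$\Leftrightarrow$(d), by contrast, is essentially sound and genuinely different from the paper's: the implication (a)$\Rightarrow$(d) via $M\hookrightarrow\prod_P M_P$ and $P_PM_P=0$ works (the paper instead uses the idempotent generating the ideal $(a_1,\dots,a_n)$), and your (d)$\Rightarrow$(a) applied to $K/p^2K$ can be completed: if $p\notin p^2K+P$ then $p\notin P,$ so $1=pa+m$ with $m\in P$ gives $p=p^2a+pm\in p^2K+P,$ a contradiction. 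But because the only known proof of (b) passes through (d), while your architecture proves (d) separately and attempts (b) directly, the cycle does not close as written.
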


\begin{proof}
We shall show that
(a)$\implies$(d)$\implies$(b)$\implies$(c)$\implies$(a).

(a)$\implies$(d):
Given $M$ and $x$ as in (d),
let $P$ be maximal among proper ideals of $K$ containing the
annihilator of $x,$ and suppose by way of contradiction that
$x\in PM,$ so that $x=\sum_1^{n} a_i x_i$ with $a_i\in P,$ $x_i\in M.$
The ideal of $K$ generated by the $a_i$ will be generated
by an idempotent $e,$ since $K$ is von~Neumann
regular \cite[Theorem~1.1(a)$\!\implies\!$(b)]{KG},
so $e\in P,$ and since each $a_i$ lies in $eK,$ we have $ex=x.$
This says that $(1-e)x=0,$ so $1-e\in P$
(since $P$ contains the annihilator of $x),$ so $1=e+(1-e)\in P,$
contradicting the assumption that $P$ is proper.

(d)$\implies$(b):
Assuming (d), we shall show that for every nonzero $x\in R,$ there is a
homomorphism from $R$ to a von~Neumann regular $\!K\!$-algebra
which does not annihilate $x.$
Hence $R$ embeds in a direct product of such algebras,
which will itself be von~Neumann regular.

Given $x\in R-\{0\},$ if we regard $R$ as a $\!K\!$-module,~(d)
says that $x\notin PR$ for some maximal ideal $P$ of $K.$
Regarding $K/P$ as a field, this tells us that $x$ has nonzero
image in the $\!K/P\!$-algebra $R/PR.$
And as noted at the beginning of this section,
every algebra over a field $k$
embeds in a von~Neumann regular $\!k\!$-algebra.

(b)$\implies$(c):
Apply (b) with $R=K/I.$

(c)$\implies$(a):
Take any $p\in K,$ and apply (c) with $I=p^2 K,$
and with the image $\overline{p}$ of $p$ in $K/I$ in the role of $p.$
This gives us a $\!K\!$-algebra containing $K/I$ in which $\overline{p}$
has an inner inverse $\overline{q},$ and we compute $\overline{p}=
\overline{p}\,\overline{q}\,\overline{p}= \overline{p}^2\,\overline{q}=
0\,\overline{q}=0.$
Thus $p\in I=p^2 K,$ i.e., in $K$ we can write
$p=p^2 q,$ and since $K$ is commutative, this equals $pqp.$
Thus every $p\in K$ has an inner inverse, so $K$ is von~Neumann regular.
\end{proof}

\section{$R'\!$-modules}\label{S.modules}

Returning to the situation of $R$ a $\!k\!$-algebra, and
$p\in R$ with $1\notin pR+Rp,$ for which we have described the
extension $R'=R\lang q\mid p=pqp\rang,$
we now want to describe the $\!R'\!$-module $M\otimes_R R'$ for an
arbitrary right $\!R\!$-module $M,$ and examine such questions as
whether an inclusion of $\!R\!$-modules $M\subseteq N$ induces
an embedding of $M\otimes_R R'$ in $N\otimes_R R'.$

Our normal form for $R'$ will generalize easily
to a normal form for $M\otimes_R R',$
but we shall find that an inclusion
of $\!R\!$-modules does not necessarily induce an embedding
of $\!R'\!$-modules.
The reason is that the relation $p=pqp$ in $R'$ makes
$1-qp$ right annihilate $p,$ hence $1-qp$ also annihilates
all elements of the form $xp$ in any right $\!R'\!$-module.
We shall in fact see that in $M\otimes_R R',$
the set of elements of $M$ annihilated by $1-qp$ is precisely $Mp.$
Hence if $M$ is a submodule of an $\!R\!$-module $N,$ and there is
an element $y\in M$ which is not a multiple of $p$ in $M,$
but becomes one in $N,$ then
the map of $\!R'\!$-modules induced by the inclusion
$M\subseteq N$ will kill the nonzero element $y(1-qp).$

However, we shall find that we can describe the structure
of the $\!R'\!$-submodule of $N\otimes_R R'$ generated by $M$ wholly
in terms of the $\!R\!$-module structure of $M,$ and the set of
elements of $M$ which become multiples of $p$ in $N.$
Let us set up language and notation to handle this.
In the next definition, we do not assume $1\notin pR+Rp,$
since we will be calling on it again in sections where that
assumption does not apply.

\begin{definition}\label{D.tempered}
Let $k$ be a field, $R$ a $\!k\!$-algebra, and $p$ an element of $R.$

By a {\em $\!p\!$-tempered} right $\!R\!$-module, we shall mean
a pair $(M,M_+)$ where $M$ is a right $\!R\!$-module, and
$M_+$ is any $\!k\!$-vector-subspace of $M$ which contains the
subspace $Mp,$ is annihilated by the right annihilator of $p$
in $R,$ and is closed under multiplication by the subring
$\{x\in R\mid px\in Rp\}\subseteq R.$

A {\em morphism} of $\!p\!$-tempered right $\!R\!$-modules
$h:(M,M_+)\to (N,N_+)$ will mean an
$\!R\!$-module homomorphism $h:M\to N$
such that $h(M_+)\subseteq N_+.$
Such a morphism will be called
an {\em embedding} of $\!p\!$-tempered right $\!R\!$-modules if it is
one-to-one, and satisfies $M_+=h^{-1}(N_+).$

Finally, let $R'=R\lang q\mid p=pqp\rang.$
Then for any $\!p\!$-tempered $\!R\!$-module $(M,M_+),$
we shall denote by $(M,M_+)\otimes_{(R,p)} R'$ the
quotient of $M\otimes_R R'$ by the submodule generated by all elements
\begin{equation}\begin{minipage}[c]{35pc}\label{d.M:xqp-x}
$xqp-x$ \quad for $x\in M_+.$
\end{minipage}\end{equation}
\end{definition}

Observe that if $M_+=Mp,$ then $(M,M_+)\otimes_{(R,p)} R'$
is simply $M\otimes_R R'.$

For $B\cup\{1\}$ a $\!k\!$-basis of $R,$ and $f$
an expression representing an element of
$R,$ we shall continue to write $f_R$ for the $\!k\!$-linear expression
in elements of $B\cup\{1\}$ that gives the value of $f.$
Likewise, if we are given a $\!k\!$-basis $C$ of $M,$ then for any
expression $f$ representing an element of $M$ \textup{(}for example, any
$\!k\!$-linear combination of words each given by an element of $C$
followed by a \textup{(}possibly empty\textup{)} string of elements
of $B),$ we shall write $f_M$ for the $\!k\!$-linear expression
in elements of $C$ giving the value of $f$ in $M.$

Using the version of the Diamond Lemma for modules
in \cite[\S9.5]{<>}, let us now prove

\begin{proposition}\label{P.M_norm}
Let $k,$ $R,$ $p,$ $B,$
and $R'=R\lang q\mid p=pqp\rang$ be as in Theorem~\ref{T.1_notin}.
Let $(M,M_+)$ be a $\!p\!$-tempered right $\!R\!$-module,
let $C_+$ be a $\!k\!$-basis of $M_+,$ and let
$C_-$ be a $\!k\!$-basis of $M$ relative to $M_+,$
so that $C=C_+\cup C_-$ is a $\!k\!$-basis of $M.$

Then $(M,M_+)\otimes_{(R,p)} R'$ has $\!k\!$-basis
given by all words $w$ that are composed of an element of $C$
followed by a \textup{(}possibly empty\textup{)}
string of elements of $B\cup\{q\},$ such that $w$ contains no
subwords~\eqref{d.xy} or~\eqref{d.xpqpy}
as in Theorem~\ref{T.1_notin}, nor any subwords
\begin{equation}\begin{minipage}[c]{35pc}\label{d.M:xy}
$xy$ \quad with $x\in C$ and $y\in B$
\end{minipage}\end{equation}
or
\begin{equation}\begin{minipage}[c]{35pc}\label{d.M:xqpy}
$x\,q\,(py)$ \quad with $x\in C_+$ and $py\in B_{++}\cup B_{+-}\,.$
\end{minipage}\end{equation}

The reduction to the above normal form may be accomplished by
the system of reductions~\eqref{d.xy|->} and~\eqref{d.xpqpy|->}
given in Theorem~\ref{T.1_notin}, together with
\begin{equation}\begin{minipage}[c]{35pc}\label{d.M:xy|->}
$xy\ \mapsto\ (xy)_M$ \quad for $x\in C,$ $y\in B$
\end{minipage}\end{equation}
and
\begin{equation}\begin{minipage}[c]{35pc}\label{d.M:xqpy|->}
$x\,q\,(py)\ \mapsto\ (xy)_M$ \quad for
$x\in C_+,$ $py\in B_{++}\cup B_{+-}\,.$
\end{minipage}\end{equation}
\end{proposition}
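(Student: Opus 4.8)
The plan is to imitate the proof of Theorem~\ref{T.1_notin}, now invoking the module form of the Diamond Lemma \cite[\S9.5]{<>}. First I would verify that the relations underlying \eqref{d.M:xy|->} and \eqref{d.M:xqpy|->} -- namely $xy=(xy)_M$ for $x\in C,\ y\in B$, and $x\,q\,(py)=(xy)_M$ for $x\in C_+,\ py\in B_{++}\cup B_{+-}$ -- hold in $(M,M_+)\otimes_{(R,p)}R'$ and, together with the ring relations \eqref{d.xy=} and \eqref{d.xpqpy=}, present it. The first family merely records the $\!R\!$-module structure of $M$ on the generating set $C$; the second is equivalent, given the ring relations, to the imposed relations $xqp=x$ for $x\in M_+$ of \eqref{d.M:xqp-x}, since $x\,q\,(py)=(xqp)\,y=xy$ and, conversely, the case $py=p$ recovers $xqp=x$ on the spanning set $C_+$ (when $p\neq 0$; when $p=0$ the tempered hypotheses force $M_+=0$, so nothing is imposed). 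A point to settle at the outset is that \eqref{d.M:xqpy|->} is \emph{well defined}: if $py=py'$ then $y-y'$ lies in the right annihilator of $p$, which annihilates $M_+\ni x$, whence $(xy)_M=(xy')_M$; this is the first use of the hypothesis that $M_+$ is annihilated by that annihilator. Termination is immediate, since \eqref{d.M:xy|->} and \eqref{d.M:xqpy|->} each replace a word by a $\!k\!$-linear combination of strictly shorter words, as do the ring reductions acting on a suffix.

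Next I would enumerate the overlap ambiguities. Those internal to the $B\cup\{q\}$-part of a word are the ring ambiguities \eqref{d.xyz}--\eqref{d.xpqyqpz}, resolved exactly as in Theorem~\ref{T.1_notin}. The genuinely new ones, in which a module reduction meets an overlapping reduction, are -- since the sole module generator of a word sits at its left end -- precisely the four module analogs of \eqref{d.xyz}, \eqref{d.xypqpz}, \eqref{d.xpqpyz}, \eqref{d.xpqyqpz}, got by replacing the leading element of $B$ with one of $C$ (respectively $C_+$): the word $x\cdot y\cdot z$ with $x\in C,\ y,z\in B$ [\eqref{d.M:xy|->} against \eqref{d.xy|->}]; the word $x\cdot (yp)\cdot q\,(pz)$ with $x\in C,\ yp\in B_{++}\cup B_{-+},\ pz\in B_{++}\cup B_{+-}$ [\eqref{d.M:xy|->} against \eqref{d.xpqpy|->}]; the word $x\,q\,(py)\cdot z$ with $x\in C_+,\ py\in B_{++}\cup B_{+-},\ z\in B$ [\eqref{d.M:xqpy|->} against \eqref{d.xy|->}]; and the word $x\,q\,y\,q\,(pz)$ with $x\in C_+,\ y\in B_{++},\ pz\in B_{++}\cup B_{+-}$ [\eqref{d.M:xqpy|->} against \eqref{d.xpqpy|->}]. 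There are no overlaps between \eqref{d.M:xy|->} and \eqref{d.M:xqpy|->}, nor self-overlaps, because the symbol following the leading module generator is forced to lie in $B$ in the one case and to equal $q$ in the other.

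I would then resolve these four in turn, which is where the three defining properties of a $\!p\!$-tempered module enter. The first reduces by either route to $(xyz)_M$ using only associativity of the module action. In the second, reducing $x\cdot(yp)$ first gives $(x\,yp)_M\,q\,(pz)$, and since $x\,yp\in Mp\subseteq M_+$ this coefficient lies in the span of $C_+$, so \eqref{d.M:xqpy|->} applies and both routes yield the common value; here the hypothesis $Mp\subseteq M_+$ is exactly what keeps us inside the domain of \eqref{d.M:xqpy|->}. In the third, reducing $(py)\cdot z$ by \eqref{d.xy|->} produces a $\!k\!$-linear combination of elements of $B_{++}\cup B_{+-}$ (with no unit term, as $1\notin pR$), to each of which \eqref{d.M:xqpy|->} applies; matching the two results to the common value $(xyz)_M$ again uses that $M_+$ is annihilated by the right annihilator of $p$.

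The hard case, and the main obstacle, is the last ambiguity, the module analog of the most intricate ring computation \eqref{d.xpqyqpz}. Writing $y=py'=y''p$, the left route reduces $x\,q\,y$ to $(xy')_M\,q\,(pz)$; to continue I must apply \eqref{d.M:xqpy|->} again, which requires $xy'\in M_+$. This holds precisely because $py'=y\in B_{++}\subseteq Rp$ places $y'$ in $\{w\in R\mid pw\in Rp\}$, and $M_+$ is \emph{closed under multiplication by that subring} -- the one tempered hypothesis not yet used. The right route reduces $y\,q\,(pz)$ by \eqref{d.xpqpy|->} and then applies \eqref{d.M:xqpy|->} to the resulting combination of elements of $B_{++}\cup B_{+-}$, the annihilator hypothesis identifying the two outcomes as $(xy'z)_M$. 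With all ambiguities resolved, \cite[\S9.5]{<>} yields that the words avoiding \eqref{d.xy}, \eqref{d.xpqpy}, \eqref{d.M:xy}, and \eqref{d.M:xqpy} form a $\!k\!$-basis, as asserted. The delicate recurring points are keeping the intermediate coefficients inside $M_+$ so that \eqref{d.M:xqpy|->} stays applicable, and invoking the annihilator condition to make \eqref{d.M:xqpy|->} single-valued.
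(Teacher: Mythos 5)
Your proposal is correct and follows essentially the same route as the paper's own sketch: the module form of the Diamond Lemma, the same four new ambiguities obtained by replacing the leading generator with an element of $C$ or $C_+$, and the same three uses of the $\!p\!$-tempered hypotheses (well-definedness of \eqref{d.M:xqpy|->} via the annihilator condition, $Mp\subseteq M_+$ for the $x\cdot(yp)\cdot q\,(pz)$ case, and closure under $\{w\in R\mid pw\in Rp\}$ for the hardest case). Your remarks on the equivalence of \eqref{d.M:xqpy|->} with the imposed relations \eqref{d.M:xqp-x} and on the absence of a unit term in $((py)z)_R$ are slightly more explicit than the paper's, but the argument is the same.
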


\begin{proof}[Sketch of proof]
Let us first observe that in~\eqref{d.M:xqpy|->}, though the
basis-element $py$ may not uniquely determine $y,$ the element
$(xy)_M$ is nonetheless well-defined, since if $py$ can also
be written $py',$ then $y$ and $y'$ differ by a member of the
right annihilator of $p,$
so by the definition of $\!p\!$-tempered $\!R\!$-module,
their difference annihilates $x\in M_+.$

The relations corresponding
to the reductions~\eqref{d.xy|->}, \eqref{d.xpqpy|->},
\eqref{d.M:xy|->} and~\eqref{d.M:xqpy|->}
all hold in $(M,M_+)\otimes_{(R,p)} R'.$
Indeed, those corresponding to applications
of~\eqref{d.xy|->} and~\eqref{d.xpqpy|->}
hold by the structure of $R';$ those corresponding
to \eqref{d.M:xy|->} by the
$\!R\!$-module structure of $M,$ and those corresponding
to~\eqref{d.M:xqpy|->} because
in defining $(M,M_+)\otimes_{(R,p)} R',$
we have divided out by the submodule
generated by all elements~\eqref{d.M:xqp-x}.
And in fact, we see that the relations corresponding
to these reductions constitute a presentation
of the $\!R'\!$-module $(M,M_+)\otimes_{(R,p)} R'.$
As before, our reductions decrease the lengths of words,
so if all ambiguities of our reduction system are resolvable,
it will yield a normal form for
the $\!R'\!$-module $(M,M_+)\otimes_{(R,p)} R'.$

The ambiguities are of two sorts: the four given
by~\eqref{d.xyz}-\eqref{d.xpqyqpz}, which are resolvable by
Theorem~\ref{T.1_notin}, and the four analogous
ones in which the leftmost factor comes from $C$ rather than $B:$
\begin{equation}\begin{minipage}[c]{35pc}\label{d.M:xyz}
$x\cdot y\cdot z,$ where $x\in C,$ $y,z\in B,$
\end{minipage}\end{equation}
\begin{equation}\begin{minipage}[c]{35pc}\label{d.M:xqpyz}
$x\,q \cdot (py)\cdot z,$ where
$x\in C_+,$
$py\in B_{++}\cup B_{+-},$
$z\in B,$
\end{minipage}\end{equation}
\begin{equation}\begin{minipage}[c]{35pc}\label{d.M:xypqpz}
$x\cdot (yp) \cdot q\,(pz),$ where
$x\in C,$
$yp\in B_{++}\cup B_{-+},$
$pz\in B_{++}\cup B_{+-},$
\end{minipage}\end{equation}
\begin{equation}\begin{minipage}[c]{35pc}\label{d.M:xqyqpz}
$x\,q \cdot y\cdot q\,(pz),$ where
$x\in C_+,$
$y=py'=y''p\in B_{++},$
$pz\in B_{++}\cup B_{+-}\,.$
\end{minipage}\end{equation}

I claim~\eqref{d.M:xyz}-\eqref{d.M:xqyqpz} are resolvable by
computations analogous
to those we used for~\eqref{d.xyz}-\eqref{d.xpqyqpz},
the common forms to which the results of the two
possible reductions lead now being $(xyz)_M$
for~\eqref{d.M:xyz} and~\eqref{d.M:xqpyz},
$(xypz)_M$ for~\eqref{d.M:xypqpz},
and $(xy'z)_M$ for~\eqref{d.M:xqyqpz}.
Let us sketch the verifications.

The resolvability of~\eqref{d.M:xyz} follows from the
fact that $M$ is an $\!R\!$-module.

The case of \eqref{d.M:xqpyz} is like that of~\eqref{d.xpqpyz},
the one difference being that
where there we wrote the leftmost basis element as $xp,$
here it is a general element $x\in M_+;$ but in either case,
our reduction~\eqref{d.M:xqpy|->}
allows us (roughly speaking) to drop a following ``$qp$''.

In~\eqref{d.M:xypqpz}, if we begin by reducing $x\cdot (yp)$
using~\eqref{d.M:xy|->}, that product becomes
$(xyp)_M,$ the representation of a member of $Mp\subseteq M_+,$
hence its expression in terms of $C$ involves only members of $C_+.$
Hence by~\eqref{d.M:xqpy|->}, when we multiply it by $q\,(pz),$
each of the resulting products reduces to the value we
would have gotten if
we had simply multiplied by $z,$ so the result is indeed $(xypz)_M.$
If instead we first reduce $(yp)\,q\cdot (pz)$ to $(ypz)_R$
using~\eqref{d.xpqpy|->}, then multiply $x$ by this,
applying~\eqref{d.M:xy|->} to each term
occurring, we get the same result $(xypz)_M.$

The calculation for~\eqref{d.M:xqyqpz} combines the features of the
two preceding cases.
The reduction of $xq\cdot y$ works as in the case of~\eqref{d.M:xqpyz}
once we rewrite $y$ as $py',$ and gives $(xy')_M.$
Moreover, because $py'=y''p,$ the subspace $M_+\subseteq M$ is carried
into itself by multiplication by $y'$ (see end of second paragraph of
Definition~\ref{D.tempered}), so $xy'\in M_+;$ hence multiplication of
$(xy')_M$ by $q\,(pz)$ is the same as multiplication
by $z,$ and gives $(xy'z)_M.$
On the other hand, if we begin by reducing
$y\cdot q\,(pz)=(y''p)\,q\,(pz)$
to $(y''pz)_R=(py'z)_R,$ then the result of multiplying $x\,q$ by
this is again $(xy'z)_M,$ by application of~\eqref{d.M:xqpy|->} to each
term occurring.
\end{proof}

Here are some easy consequences.

\begin{corollary}\label{C.M->MOX}
For $R,$ $p,$ $R'$ and $(M,M_+)$ as in Proposition~\ref{P.M_norm}, the
canonical $\!R\!$-module homomorphism $M\to(M,M_+)\otimes_{(R,p)} R'$
is an embedding; and identifying $M$ with its image under
this map, we have
\begin{equation}\begin{minipage}[c]{35pc}\label{d.M+=}
$M_+\ =\ M\ \cap\ ((M,M_+)\otimes_{(R,p)} R')\,p\ =
\ \{x\in M\mid x(qp-1)=0$ in $(M,M_+)\otimes_{(R,p)} R'\}.$
\end{minipage}\end{equation}

In particular, for any $\!p\!$-tempered $\!R\!$-module $(M,M_+),$
the module $M$ can be embedded in an $\!R\!$-module $N$ so that
$M_+=M\cap Np.$
\end{corollary}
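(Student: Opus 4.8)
The plan is to prove Corollary~\ref{C.M->MOX} directly from the normal form established in Proposition~\ref{P.M_norm}, reading off the claimed facts from which words survive in the $\!k\!$-basis. First I would observe that the reduced words lying in the submodule $M$ itself (i.e.\ those containing no letters from $\{q\}\cup B$ after the initial $C\!$-letter) are precisely the single-letter words $x$ with $x\in C$; since $C=C_+\cup C_-$ is a basis of $M$, these map bijectively onto a basis of $M$. This immediately shows that the canonical map $M\to (M,M_+)\otimes_{(R,p)} R'$ is injective, so it is an embedding of $\!R\!$-modules. Henceforth I identify $M$ with its image.

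**The two descriptions of $M_+$.** The substance of the corollary is the pair of equalities in~\eqref{d.M+=}. For the middle term, the containment $M_+ \subseteq M\cap ((M,M_+)\otimes_{(R,p)} R')\,p$ holds because $M_+$ is closed under the relations we divided out: for $x\in C_+$ we have $x = x\,qp$ in the tensor module (this is exactly relation~\eqref{d.M:xqp-x}), exhibiting $x$ as the product $(xq)\,p$, hence as an element of the module times $p$. For the reverse containment I would argue at the level of normal forms: an element of $M$ (a $\!k\!$-combination of the $x\in C$) that also equals $w\,p$ for some $w$ must, after reducing $w\,p$ to normal form, yield a combination of single-letter words $x\in C$; and I would check that the reductions~\eqref{d.M:xy|->} and~\eqref{d.M:xqpy|->} can produce a bare $x\in C_-$ only through $(xy)_M$-type outputs that already live in $M_+$ when the last applied letter was $p$, forcing the result into $M_+$. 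The cleanest route is to note that any element of the form $w\,p$ lies in the span of words ending in a letter of $B_{++}\cup B_{-+}$ (right-divisible by $p$) before the final reduction, and that such right-multiples land in $Mp\subseteq M_+$ after collapsing to $C$.

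**The annihilator description.** For the third term, the containment $M_+\subseteq\{x\mid x(qp-1)=0\}$ is again immediate from relation~\eqref{d.M:xqp-x}, which says precisely $x\,qp = x$ for $x\in M_+$. For the reverse inclusion, I would take $x\in M$ with $x(qp-1)=0$, write $x=\sum_{c\in C}\alpha_c\,c$ in terms of the basis, and compute $x\,qp$ in normal form. Using~\eqref{d.M:xqpy|->}, the $c\in C_+$ contributions reduce $c\,qp$ back to $c$, while for $c\in C_-$ the word $c\,q$ is \emph{already reduced} (it is not of the forbidden form~\eqref{d.M:xqpy}, which requires the leading letter to lie in $C_+$), so $c\,q\,p$ reduces only via~\eqref{d.M:xy} applied to the trailing $q\,p$ — but $q\,p$ is not a subword of the type~\eqref{d.xy} either, so $c\,q\,p$ survives as a genuinely longer normal-form word. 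Thus the $C_-$ part of $x\,qp$ contributes words of length $\geq 3$ that cannot cancel against the length-one words comprising $x$, forcing all $\alpha_c=0$ for $c\in C_-$, i.e.\ $x\in M_+$.

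**Main obstacle and the final embedding statement.** I expect the main obstacle to be the reverse containment in the middle equality, $M\cap ((M,M_+)\otimes_{(R,p)} R')\,p \subseteq M_+$: one must rule out that some $\!k\!$-combination involving $C_-$-elements can be written as a right multiple of $p$ in the tensor module, and this requires care because a general element of the module times $p$ is a combination of many reduced words, not a single one. The linear-independence of distinct normal-form words (Proposition~\ref{P.M_norm}) is exactly the tool that makes this work: I would extract the ``words lying in $M$'' component and show it is forced into $M_+$. Once~\eqref{d.M+=} is established, the final assertion follows by taking $N=(M,M_+)\otimes_{(R,p)} R'$ regarded as an $\!R\!$-module: the middle equality of~\eqref{d.M+=} is precisely the statement $M_+ = M\cap Np$, so $(M,M_+)$ embeds as a $\!p\!$-tempered module with $N_+=Np$, completing the proof.
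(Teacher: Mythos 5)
Your treatment of injectivity and of the equality between the leftmost and rightmost sets in~\eqref{d.M+=} is correct and is essentially the paper's argument: for $x=\sum_{c\in C}\alpha_c c,$ the word $cqp$ reduces to $c$ when $c\in C_+$ but is already in normal form (length $3)$ when $c\in C_-,$ so $x(qp-1)=0$ forces $x\in M_+.$ The final embedding statement is also handled as in the paper. However, there is a genuine gap in your proof of $M\cap((M,M_+)\otimes_{(R,p)}R')\,p\subseteq M_+,$ which you yourself flag as the main obstacle. The sketch you offer does not work as stated: the ``cleanest route'' claim that right multiples of $p$ ``land in $Mp\subseteq M_+$ after collapsing to $C$'' is false in detail -- for $x\in C_+$ the word $xqp$ reduces by~\eqref{d.M:xqpy|->} to $x,$ which lies in $M_+$ but need not lie in $Mp$ -- and, more seriously, a general element of $((M,M_+)\otimes_{(R,p)}R')\,p$ is $vp$ for $v$ a $\!k\!$-linear combination of arbitrarily long reduced words, so one would have to track the length-one component of the result through several successive reductions; you never carry this out.

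The paper closes this containment without any normal-form computation, by a one-line observation you should adopt: in \emph{any} right $\!R'\!$-module, the set of right multiples of $p$ coincides with the annihilator of $qp-1.$ Indeed, $p(qp-1)=pqp-p=0$ shows every right multiple of $p$ is killed by $qp-1,$ while $x(qp-1)=0$ gives $x=xqp=(xq)p.$ Hence the middle and rightmost sets of~\eqref{d.M+=} are equal for purely ring-theoretic reasons, and combining this with your (correct) identification of the rightmost set with $M_+$ finishes the proof. Since you already have $M_+\subseteq M\cap(\cdots)p$ and the rightmost-equals-leftmost equality, adding this single observation would repair your argument; as written, the middle equality is not established.
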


\begin{proof}
The map $M\to(M,M_+)\otimes_{(R,p)} R'$ takes elements of
the $\!k\!$-basis $C$ of $M$ to themselves as elements
of the $\!k\!$-basis of $(M,M_+)\otimes_{(R,p)} R'$
described in Proposition~\ref{P.M_norm}; hence it is one-to-one.

In~\eqref{d.M+=}, it is easy to see that the leftmost
and rightmost subspaces are
equal, since for a $\!k\!$-linear combination $x$ of the elements
of $C,$ the reduction rules reduce $xqp$ to $x$ if and
only if all the basis elements occurring in $x$ belong to $C_+,$
i.e., if and only if $x\in M_+.$
To see the equality of the middle and rightmost subspaces, note
that in any right $\!R'\!$-module, and so in particular, in
$(M,M_+)\otimes_{(R,p)} R',$ every right multiple of $p$ is
annihilated by $qp-1,$ and conversely, any element $x$
satisfying $x(qp-1)=0$ satisfies $x=xqp,$ and so is a right
multiple of~$p.$

The final statement is seen on taking $N=(M,M_+)\otimes_{(R,p)} R',$
regarded as an $\!R\!$-module.
\end{proof}

\begin{corollary}\label{C.MinN}
Let $R,$ $p$ and $R'$ be as in Proposition~\ref{P.M_norm},
and let $h:(M,M_+)\to(N,N_+)$ be a morphism of $\!p\!$-tempered
$\!R\!$-modules.
Then the induced homomorphism of $\!R'\!$-modules $h\otimes_{(R,p)}R':
(M,M_+)\otimes_{(R,p)} R'\to (N,N_+)\otimes_{(R,p)} R'$
is one-to-one if and only if $h$ is
an embedding of $\!p\!$-tempered $\!R\!$-modules
in the sense of Definition~\ref{D.tempered}.
\end{corollary}

\begin{proof}
Suppose $h$ is an embedding of $\!p\!$-tempered $\!R\!$-modules.
Then without loss of generality, we can assume that $M$ is
a submodule of $N,$ and $M_+=M\cap N_+.$
Let us take a $\!k\!$-basis $C^{(0)}_+\cup C^{(0)}_-$ of $M$ as in
the statement of Proposition~\ref{P.M_norm}, and extend $C^{(0)}_+$
to a $\!k\!$-basis $C^{(0)}_+\cup C^{(1)}_+$ of $N_+.$
By Lemma~\ref{L.V_1+V_2}, $C^{(0)}_+\cup C^{(0)}_- \cup C^{(1)}_+$ is
a $\!k\!$-basis of $M+N_+,$ and we can extend this to a $\!k\!$-basis
$C^{(0)}_+\cup C^{(0)}_- \cup C^{(1)}_+\cup C^{(1)}_-$ of $N.$
If we now write this basis as
$(C^{(0)}_+\cup C^{(1)}_+)\cup(C^{(0)}_-\cup C^{(1)}_-)$ and
use it to define a normal form in $(N,N_+)\otimes_{(R,p)} R',$
we see that $(M,M_+)\otimes_{(R,p)} R'$ forms a submodule thereof;
so the induced homomorphism is one-to-one.

Conversely, if that induced homomorphism is one-to-one,
then restricting it to the embedded copies of $M$ and $N$
in those modules, we see that $h$ is one-to-one.
Moreover, the elements of $M$ that are annihilated by $qp-1$
in $(M,M_+)\otimes_{(R,p)} R'$ will be those whose images
are annihilated by that element in $(N,N_+)\otimes_{(R,p)} R',$
i.e., $M_+=h^{-1}(N_+).$
Thus the homomorphism is
indeed an embedding of $\!p\!$-tempered $\!R\!$-modules.
\end{proof}

\section{Do we need to go beyond the case $1\notin pR+Rp$?}\label{S.enough?}

Above, we have studied the properties of
$R'=R\lang q\mid p=pqp\rang$ when $1\notin pR+Rp.$
In the next five sections we examine cases where $1\in pR+Rp.$
But it may well be that for attacking the problem of
whether the monoid of finitely generated projectives of
a von~Neumann regular $\!k\!$-algebra is always separative,
the case considered above is all that matters.

Indeed, Pere Ara (personal communication) notes that the
separativity question for unital von~Neumann regular algebras
is equivalent to the same question for nonunital
von~Neumann regular algebras.
For if $R$ were a unital example with non-separative monoid, then
regarding it as a nonunital algebra, its (slightly larger) monoid of
projectives would still have that property; while conversely,
if we had a nonunital example $R,$ then the algebra
$R^1$ gotten by adjoining a unit to $R$ would be a unital example.
Note, moreover, that if $R$ is any nonunital $\!k\!$-algebra, then the
process of adjoining a universal inner inverse to an element $p\in R$
can be carried out by passing to $R^1,$ universally adjoining an inner
inverse to $p$ in $R^1$ as a unital algebra,
then dropping the adjoined unit (i.e., passing to the
nonunital subalgebra generated by $R\cup\{q\}).$
In this construction, $1\notin pR^1 + R^1p,$
since $pR^1 + R^1p\subseteq R;$ hence the construction of
universally adjoining an inner inverse to $p$ in $R^1$
falls under the case considered in the preceding sections.

Kevin O'Meara (personal communication) has likewise suggested
that the study of the separativity
question can be reduced to the case $1\notin pR+Rp.$

So the reader mainly interested in tackling that question using
our normal form may wish to skip or
skim~\S\S\ref{S.1-sided}-\S\ref{S.Weyl}.

However, the cases considered in those
sections seem interesting; especially
the case $1\in pR+Rp-(pR\cup Rp),$ where the elaborate complexity of
the normal form we shall discover suggests some strange
territory to be explored; so we include them.

Let us first get the easy case out of the way.

\section{Normal forms when $1\in pR$ and/or $1\in Rp.$}\label{S.1-sided}

Since the two cases $1\in pR$ and $1\in Rp$ are left-right dual,
let us assume without loss of generality that $1\in pR.$
This says $p$ has a right inverse; let us
fix such a right inverse $q_0\in R.$
It will clearly be an inner inverse to $p,$
so our motivation for adjoining
a universal inner inverse (to move our ring a step toward being
von~Neumann regular) is not relevant here; but for the sake of
our general understanding of the adjunction of inner inverses,
we are including this case.

If $q$ is any other inner inverse to $p,$ then right
multiplying the relation $pqp=p$ by $q_0,$ we get $pq=1;$
in other words, once $p$ has a right inverse, every
inner inverse to $p$ is a right inverse.
Moreover, subtracting the equations $pq_0=1$ and $pq=1,$
we get $p(q_0-q)=0;$ so all right inverses to $p$ are obtained
by adding to $q_0$ arbitrary elements that right annihilate~$p.$

Note that if both $1\in pR$ and $1\in Rp$ hold, then $p$
will be invertible, and if we adjoin a universal inner inverse,
it will have to be an inverse to $p,$ hence will fall together
with the existing inverse; so in that case, the adjunction
of a universal inner inverse to $p$ leaves $R$ unchanged.
Hence we will assume below that $1\in pR$ but $1\notin Rp.$
(In particular, $1\neq 0,$ equivalently, $R\neq\{0\}.)$

Since $1\in pR,$ we have $pR=R,$ so the analog of the sort of
basis of $R$ that we used in the preceding sections becomes simpler.
Namely, we take a basis
\begin{equation}\begin{minipage}[c]{35pc}\label{d.RI:B=}
$B\cup\{1\}\ =\ B_{++}\cup B_{+-}\cup \{1\},$
\end{minipage}\end{equation}
where
\begin{equation}\begin{minipage}[c]{35pc}\label{d.RI:B_}
$B_{++}$ is any $\!k\!$-basis of $Rp=pRp$ containing $p,$

$B_{+-}$ is any $\!k\!$-basis of $R=pR$ relative to $Rp+k.$
\end{minipage}\end{equation}

Our extension
\begin{equation}\begin{minipage}[c]{35pc}\label{d.RI:R'}
$R'\ =\ R\lang q\mid pqp=p\rang\ =\ R\lang q\mid pq=1\rang$
\end{minipage}\end{equation}
is clearly spanned by words in $B\cup\{q\}$ which contain no
subwords either of the form
\begin{equation}\begin{minipage}[c]{35pc}\label{d.RI:xy}
$xy$ \quad with $x,y\in B$
\end{minipage}\end{equation}
or of the form
\begin{equation}\begin{minipage}[c]{35pc}\label{d.RI:xpq}
$(xp)\,q$ \quad with $xp\in B_{++},$
\end{minipage}\end{equation}
and any expression in our
generators can be reduced to a linear combination of
such words via the system of reductions
\begin{equation}\begin{minipage}[c]{35pc}\label{d.RI:xy|->}
$xy\ \mapsto\ (xy)_R$ \quad for all $x,y\in B$
\end{minipage}\end{equation}
and
\begin{equation}\begin{minipage}[c]{35pc}\label{d.RI:xpq|->}
$(xp)\,q\ \mapsto\ x_R$ \quad for all $xp\in B_{++}\,.$
\end{minipage}\end{equation}
In contrast to the situation of the preceding sections,
the element $xp$ of~\eqref{d.RI:xpq|->}
{\em does} determine $x:$ if $q_0\in R$ is a right inverse to $p,$
we see that $x=(xp)\,q_0;$ so the expression $x_R$
in~\eqref{d.RI:xpq|->} is well-defined.

We find that the only ambiguities of this reduction system are
\begin{equation}\begin{minipage}[c]{35pc}\label{d.RI:xyz}
$x\cdot y\cdot z,$ where $x,y,z\in B,$
\end{minipage}\end{equation}
\begin{equation}\begin{minipage}[c]{35pc}\label{d.RI:xypq}
$x\cdot (yp) \cdot q,$ where
$x\in B,$ $yp\in B_{++},$
\end{minipage}\end{equation}
and it is straightforward to verify, by the approach used
in~\S\ref{S.norm}, that these are both resolvable.

Note that in the resulting normal form, elements of $B_{++}$ can
appear nowhere but in the last position in a reduced word.

(We would get the same ring $R'$ if we adjoined
to $R$ an element $u$ subject to the relation $pu=0;$
that extension is isomorphic to the one constructed above via
the identification of $q$ with $q_0+u.$
The construction using $u$ would be simpler to study on
its own, but the construction using $q$ lends itself better
to comparison with the other cases.)

We can likewise look at extension of scalars
from $\!R\!$-modules to $\!R'\!$-modules.
Since our assumption that $p$ has a right inverse is not
left-right symmetric, right and left modules
need to be considered separately.

If $M$ is a right $\!R\!$-module, we take, as in the preceding
section, a $\!k\!$-basis $C_+\cup C_-$ for $M,$ where $C_+$ is a
$\!k\!$-basis for $Mp.$
In this situation, we do not have to think about a more general
$\!k\!$-subspace $M_+,$ consisting of elements that might
become the multiples of $p$ in an overmodule, because the upper
and lower bounds for such an $M_+$ given
in Definition~\ref{D.tempered} coincide:
$x$ is a multiple of $p$ in $M$
if and only if $x=x q_0 p,$ i.e., if and only
if $x$ is annihilated by the element $1-q_0 p$ of the right annihilator
of $p$ in $R.$

It is straightforward to
verify that $M\otimes_R R'$ is spanned by words in $C\cup B\cup \{q\}$
in which elements of $C$ occur in the leftmost position and only there,
and which are irreducible under the reductions~\eqref{d.RI:xy|->}
and~\eqref{d.RI:xpq|->}, and also the corresponding reductions in
which the leftmost element of $B,$ respectively $B_{++},$
is replaced by an element of $C,$ respectively $C_+.$
In this case, we see that if the leftmost factor of
a reduced word is in $C_+,$ then that factor is the whole word.

Turning to left $\!R\!$-modules $M,$ we find that we do not
have to distinguish a subspace $pM$ or $M_+$ at all, since $pM=M.$
Again, we get reduced words having the same formal descriptions as
for reduced words of $R'.$
In this case, the analog of the fact that elements of $B_{++}$
and $C_+$ can only appear in final position is that elements of $B_{++}$
never appear.
(Whatever such an element might be followed by -- a member of $B,$
a $q,$ or a member of $C$ -- leads to a reducible word.)

Returning to our development of the structure of
$R'=R\lang q\mid pqp=p\rang,$
we remark that the uninteresting case that we referred to
briefly at the start of this section and then put aside,
where $p$ is invertible in $R,$ so that $R'=R,$
is the one case where the subalgebra of $R'$ generated by $q$
may fail to be a polynomial ring $k[q].$
Rather, it will, necessarily, fall together with
$k[p^{-1}]\subseteq R,$ which, if $p$ is algebraic over~$k,$
is the finite-dimensional subalgebra of $R$ generated by $p.$

\section{The case where $1\in pR+Rp - (pR\cup Rp):$ groping toward a normal form.}\label{S.1=}

We now consider the most difficult case, that in which
$1\in pR+Rp,$ but where $1$ does not lie in $pR$ or $Rp.$
In this section we illustrate the process of trying to find
a normal form, discovering more and more reductions as we go.
In the next section, we shall make precise the pattern that these
show, and prove that the resulting set of reductions does
lead to a normal form for $R'.$

Let us begin with a general observation, and a slight digression.

In any ring $R$ with an element $p$ that
has an inner inverse $q,$ so that
\begin{equation}\begin{minipage}[c]{35pc}\label{d.1=:pqp}
$pqp\ =\ p,$
\end{minipage}\end{equation}
note that
$p$ is left-annihilated by $pq-1$ and right-annihilated by $qp-1.$
Consequently,
\begin{equation}\begin{minipage}[c]{35pc}\label{d.1=:pq-1*qp-1}
$(pq-1)(pR+Rp)(qp-1)\ =\ \{0\}.$
\end{minipage}\end{equation}

Hence in the situation we are now interested in,
where $1\in pR+Rp,$ we get $(pq-1)1(qp-1)\ =\ 0,$ i.e.,
\begin{equation}\begin{minipage}[c]{35pc}\label{d.1=:pqqp}
$pqqp\ =\ pq + qp - 1.$
\end{minipage}\end{equation}

In the $\!k\!$-algebra $R$ presented
simply by two generators $p$ and $q$ and the
relations~\eqref{d.1=:pqp} and~\eqref{d.1=:pqqp}, we
find that the reduction system
\begin{equation}\begin{minipage}[c]{35pc}\label{d.1=:pqp|->}
$pqp\ \mapsto\ p,$
\end{minipage}\end{equation}
\begin{equation}\begin{minipage}[c]{35pc}\label{d.1=:pqqp|->}
$pqqp\ \mapsto\ pq+qp-1$
\end{minipage}\end{equation}
satisfies the conditions of the Diamond Lemma: there are just four
ambiguities, corresponding to the words
\begin{equation}\begin{minipage}[c]{35pc}\label{d.1=:pq_only}
$pq\cdot p\cdot qp,$ \quad
$pq\cdot p\cdot qqp,$ \quad
$pqq\cdot p\cdot qp,$ \quad
$pqq\cdot p\cdot qqp,$
\end{minipage}\end{equation}
and straightforward computations show that these are all resolvable.
So this algebra has a normal form with basis the set of all strings of
$\!p\!$'s and $\!q\!$'s that contain no substrings $pqp$ or $pqqp.$
Curiously, this algebra itself satisfies $1\in pR+Rp,$
by~\eqref{d.1=:pqqp}.
Consequently, it is universal among $\!k\!$-algebras $R$
given with elements $p$ and $q$ satisfying~\eqref{d.1=:pqp}
and such that $1\in pR+Rp.$
(It is not, however, universal among $\!k\!$-algebras given
with elements $p$ and $q$ satisfying~\eqref{d.1=:pqp}
together with specified elements $s$ and $t$ such that $1=ps+tp,$
i.e., it is not $k\lang p, q, s, t\mid p=pqp,\,1=ps+tp\rang,$
the universal example one would first think of.)

The above algebra might be worthy of study,
but it is not one of the algebras we are preparing to investigate here.
Those are the algebras $R'$ gotten by starting with a $\!k\!$-algebra
$R$ given with an element $p$ such that
\begin{equation}\begin{minipage}[c]{35pc}\label{d.1=_but}
$1\in pR+Rp$ but $1\notin pR,$ $1\notin Rp,$
\end{minipage}\end{equation}
and adjoining a universal inner inverse $q$ to $p.$

As in the preceding sections, we shall start by taking an
appropriate $\!k\!$-basis of $R.$
A problem is that since $1\in pR+Rp,$ we can't take a $\!k\!$-basis
containing sets $B_{++},$ $B_{+-}$ and $B_{-+}$ as
in~\eqref{d.B=} and~\eqref{d.B_},
and {\em also} the unit $1$ (which we want to represent
by the empty word in our normal form for $R').$
What we shall do instead is choose a spanning set for $R$
rather like that of~\eqref{d.B=} and~\eqref{d.B_}, but
which is not quite $\!k\!$-linearly independent,
then handle the one linear relation it satisfies as
an extra reduction,~\eqref{d.1=:ps|->} below.

(Naively we might, instead, think of using a normal form for
$R'$ in which $1$ is not represented by the empty monomial, but by the
sum of a basis element from $pR$ and a basis element from $Rp.$
However, the version of the Diamond Lemma we are using requires that
we regard $1$ as the empty monomial in our generators,
so that won't work.
It {\em would} work if we use the version of the Diamond Lemma for
nonunital rings.
But then, to restore unitality, we would have
to throw in reductions that force our new generator $q$ to be fixed
under left and right multiplication by the sum-of-generators that gives
the $1$ of $R,$ and this seems messier than the path we shall follow.)

So given $R$ satisfying~\eqref{d.1=_but},
let us fix elements $s,$ $t\in R$ such that
\begin{equation}\begin{minipage}[c]{35pc}\label{d.1=}
$1\ =\ ps+tp$ in $R,$
\end{minipage}\end{equation}
and choose a spanning set for $R$ as a $\!k\!$-vector-space, of the form
\begin{equation}\begin{minipage}[c]{35pc}\label{d.1=:B=}
$B\cup\{1\}\ =\ B_{++}\cup B_{+-} \cup B_{-+}\cup B_{--}\cup \{1\},$
\end{minipage}\end{equation}
where
\begin{equation}\begin{minipage}[c]{35pc}\label{d.1=:B_}
$B_{++}$ is any $\!k\!$-basis of $pR\cap Rp$ containing
the element $p,$

$B_{+-}$ is any $\!k\!$-basis of $pR$ relative to $pR\cap Rp$
containing the element $ps,$

$B_{-+}$ is any $\!k\!$-basis of $Rp$ relative to $pR\cap Rp$
containing the element $tp,$

$B_{--}$ is any $\!k\!$-basis of $R$ relative to $pR+Rp.$
\end{minipage}\end{equation}
Note that the condition above that $B_{+-}$ contain $ps$ can
be achieved because
$ps$ does not lie in $pR\cap Rp;$ if it did,~\eqref{d.1=}
would imply $1\in Rp,$ contrary to our assumptions.
By the dual observation, the condition
that $B_{-+}$ contain $tp$ can also be achieved.
Since $pR+Rp$ contains $1,$ we don't have to
throw a ``$+k$'' onto the $pR+Rp$ in the description of $B_{--}$
as in~\eqref{d.B_}.
Thus the above $B$ will be a $\!k\!$-basis
of $R$ by Lemma~\ref{L.V_1+V_2}.
But this means that $B\cup\{1\}$ will not.
Rather, by~\eqref{d.1=}, $B\cup\{1\}-\{ps\}$ will be a $\!k\!$-basis
of $R.$

For any $\!k\!$-algebra expression $f$ in the elements of $B,$
let $f_R$ denote the unique $\!k\!$-linear combination
of elements of $B\cup\{1\}-\{ps\}$ representing the value
of $f$ in $R.$
Then we see that $R$ can be presented using the generating set $B,$
the relations corresponding to the reductions
\begin{equation}\begin{minipage}[c]{35pc}\label{d.1=:xy|->}
$xy\ \mapsto\ (xy)_R$ \quad for all $x,y\in B,$
\end{minipage}\end{equation}
and the relation corresponding to the single additional reduction
\begin{equation}\begin{minipage}[c]{35pc}\label{d.1=:ps|->}
$(ps)\ \mapsto\ 1-(tp).$
\end{minipage}\end{equation}

We now construct $R'$ by adjoining an additional generator $q,$
and imposing the relation $pqp=p.$
As in \S\ref{S.norm}, this leads to the further reductions
\begin{equation}\begin{minipage}[c]{35pc}\label{d.1=:xpqpy|->}
$(xp)\,q\,(py)\ \mapsto\ (xpy)_R$ \quad for all
$xp\in B_{++}\cup B_{-+},$ $py\in B_{++}\cup B_{+-}\,.$
\end{minipage}\end{equation}

But in view of~\eqref{d.1=:pqqp}, these reductions
cannot be sufficient to give a normal form for $R',$ so they must have
non-resolvable ambiguities.

And indeed, note that
for any $xp\in B_{++}\cup B_{-+},$ the word $(xp)\,q\,(ps)$
is ambiguously reducible, using~\eqref{d.1=:xpqpy|->} on the
one hand or~\eqref{d.1=:ps|->} on the other.
Equating the results gives the
relation $(xps)_R = (xp)\,q-(xp)\,q\,(tp).$
Regarding this as a formula for reducing the
longest monomial that it involves, $(xp)\,q\,(tp),$
we get a new family of reductions,
\begin{equation}\begin{minipage}[c]{35pc}\label{d.1=:xpqtp|->}
$(xp)\,q\,(tp)\ \mapsto\ (xp)\,q-(xps)_R$\quad
for all $xp\in B_{++}\cup B_{-+}\,.$
\end{minipage}\end{equation}

These, in turn, lead to an ambiguity in the reduction of
any word $(xp)\,q\cdot(tp)\cdot q\,(py):$
we can apply~\eqref{d.1=:xpqtp|->}, getting
$(xp)\,qq\,(py)-(xps)_R\,q\,(py),$
or~\eqref{d.1=:xpqpy|->}, getting $(xp)\,q\,(tpy)_R.$
So let us again make the relation equating these
expressions into a reduction affecting the longest word
occurring, which is now $(xp)\,qq\,(py).$
With a view to what is to come, I will number this
\begin{equation}\tag{\xppy{2}}\begin{minipage}[c]{35pc}
$(xp)\,qq\,(py)\ \mapsto\ (xps)_R\,q\,(py)+(xp)\,q\,(tpy)_R$\quad
for all $xp\in B_{++}\cap B_{-+}$ and $py\in B_{++}\cap B_{+-}\,.$
\end{minipage}\end{equation}

(Digression: If we rewrite the factor $(xps)_R$ in the first term
of the output of the above
reduction as $(x(1\,{-}\,tp))_R=x_R-(xtp)_R,$ and inversely,
rewrite the factor $(tpy)_R$ at the end of the last term as
$((1\,{-}\,ps)y)_R=y_R-(psy)_R,$ then
the resulting terms of~(\xppy{2}) include
$(xtp)_R\,q\,(py)$ and $(xp)\,q\,(psy)_R,$ which
by~\eqref{d.1=:xpqpy|->} reduce to $(xtpy)_R$ and $(xpsy)_R,$
which then sum to $(x(ps+tp)y)_R=(xy)_R.$
This turns~(\xppy{2}) into
\begin{equation}\begin{minipage}[c]{35pc}\label{d.1=:xpqqpyalt}
$(xp)\,qq\,(py)\ \mapsto\ x_R\,q\,(py)+(xp)\,q\,y_R-(xy)_R.$
\end{minipage}\end{equation}
This can be seen as embodying~\eqref{d.1=:pqqp}; it represents
the result of multiplying that equation on the
left by $x$ and on the right by $y.$
The form~\eqref{d.1=:xpqqpyalt} has the nice feature of
not depending on the choice of $s$ and $t$
in~\eqref{d.1=}, but it has the downside
that it involves expressions $x_R,$ $y_R$ and $(xy)_R$ which
are not uniquely determined by the given basis elements
$xp$ and $py,$ in contrast to the situation we had in \S\ref{S.norm},
where expressions occurring in our reductions, such as $(xpy)_R,$
were shown to depend only on the basis elements $xp$ and $py.$
For this reason we will
use~(\xppy{2}) rather than~\eqref{d.1=:xpqqpyalt}.)

The five families of reductions~\eqref{d.1=:xy|->}, \eqref{d.1=:ps|->},
\eqref{d.1=:xpqpy|->}, \eqref{d.1=:xpqtp|->},
(\xppy{2}) that we have accumulated
at this point admit $20$ families of ambiguities!
Namely, the {\em final} factor in $B$ of the
input-monomials of each of these
sorts of reductions can coincide with the {\em initial} factor in $B$
of the input-monomials of most of these sorts of reduction,
the exceptions being that the
lone factor $(ps)$ of the input
of~\eqref{d.1=:ps|->} cannot coincide with the
initial factors of the inputs of~\eqref{d.1=:xpqpy|->},
\eqref{d.1=:xpqtp|->} or (\xppy{2}),
nor with the final factor of the input of~\eqref{d.1=:xpqtp|->}
(thus eliminating four of the $25$ potential pairings);
nor does one get an ambiguity by overlapping~\eqref{d.1=:ps|->}
with itself.

Many of these $20$ sorts of
ambiguities are already resolvable, either because
of the way they incorporate the structure of the associative ring $R,$
or because some of the later reductions were introduced precisely to
make earlier ambiguities resolvable.
Summarizing long and tedious hand computations (which we will be able
to circumvent in the next section), one finds that
of those $20$ sorts of ambiguities, 17 are
resolvable, the three exceptions being
\begin{equation}\begin{minipage}[c]{35pc}\label{d.three.ambig}
$(xp)\,q\cdot (tp)\cdot q\,(tp),\qquad
(xp)\,q\cdot (tp)\cdot qq\,(py),\qquad
(xp)\,qq\cdot(ps).$
\end{minipage}\end{equation}

Of these, the first and third turn out to yield a common relation.
Selecting, as usual, the longest monomial in that relation,
and writing the result as a formula reducing
that monomial to a combination of the others, this takes the form
\begin{equation}\tag{\xptp{2\,}}\begin{minipage}[c]{35pc}
$(xp)\,qq\,(tp)\ \mapsto\ (xp)\,qq - (xps)_R\,q + (xps)_R\,q\,(tp)
- (xp)\,q\,(tps)_R.$
\end{minipage}\end{equation}

The middle ambiguity shown in~\eqref{d.three.ambig}
yields a different reduction:
\begin{equation}\tag{\xppy{3}}\begin{minipage}[c]{35pc}
$(xp)\,qqq\,(py)\ \mapsto\ (xps)_R\,qq\,(py) + (xp)\,q\,(tps)_R\,q\,(py)
+ (xp)\,qq\,(tpy)_R - (xps)_R\,q\,(tpy)_R.$
\end{minipage}\end{equation}

Examining the reductions we have been getting (after~\eqref{d.1=:xy|->}
and~\eqref{d.1=:ps|->}, which describe $R$ itself), they
appear to fall into two series (as indicated in
numbering I have given them),
one starting with~\eqref{d.1=:xpqpy|->}, (\xppy{2}),
(\xppy{3}), the other with~\eqref{d.1=:xpqtp|->}, (\xptp{2}).
Examining which ambiguities turned out to yield which new
reductions, one can guess which should yield the
next term in each series.
In this way one finds, for instance, the next reduction
in the \eqref{d.1=:xpqpy|->}-series:
\begin{equation}\tag{\xppy{4}}\begin{minipage}[c]{35pc}
$(xp)\,qqqq\,(py)\ \mapsto\ (xps)_R\,qqq\,(py) +
(xp)\,q\,(tps)_R\,qq\,(py) + (xp)\,qq\,(tps)_R\,q\,(py)
+ (xp)\,qqq\,(tpy)_R\\
\hspace*{2em}- (xps)_R\,q\,(tps)_R\,q\,(py)
- (xps)_R\,qq\,(tpy)_R - (xp)\,q\,(tps)_R\,q\,(tpy)_R.$
\end{minipage}\end{equation}

The pattern of the inputs of~\eqref{d.1=:xpqpy|->}, (\xppy{2}),
(\xppy{3}), (\xppy{4}) is clear; but what about the outputs?
It appears that (ignoring signs, for the
moment), each term in the output of a reduction~``(\xppy{n})'' is
obtained from the input monomial $(xp)\,q^n\,(py)$ by
replacing or not replacing the initial $(xp)\,q$ with $(xps)_R,$
replacing or not replacing the final $q\,(py)$ with $(tpy)_R,$ and
replacing or not replacing some of the remaining $\!q\!$'s
with $(tps)_R.$
But not every possible combination
of such changes and non-changes shows up in our reductions; only
those where no two successive $\!q\!$'s are changed.

Can we make sense of this?

\section{The normal form, described and proved.}\label{S.1=:norm}

The relations in $R'$ that yield the reductions~(\xppy{n})
can in fact be derived from scratch in roughly the way we obtained the
relation~\eqref{d.1=:pqqp}; namely, by inserting terms $1=ps+tp$
between certain factors of the input
monomial, partly expanding the result, and then
simplifying using~\eqref{d.1=} and~\eqref{d.1=:xpqpy|->}.
For example, the relation corresponding to~(\xppy{2}) can
be gotten as follows:
\begin{equation}\begin{minipage}[c]{25pc}\label{d.insert-2}
\hspace*{-5.8em}$(xp)\,qq\,(py)\ =
\ (xp)\,q\,(ps+tp)\,q\,(py)\ \\[.3em]
=\ (xp)\,q\,(ps)\,q\,(py)+ (xp)\,q\,(tp)\,q\,(py)\ \\[.3em]
=\ (xps)_R\,q\,(py)+(xp)\,q\,(tpy)_R.$
\end{minipage}\end{equation}

However, not every string of insertions of terms $(ps)$
and $(tp)$ between $\!q\!$'s in a word $(xp)\,q\dots q\,(py)$
admits a simplification of the sort used above.
We could not, for instance, simplify
a string $\dots (tp)\,q\,(tp)\dots$
using~\eqref{d.1=:xpqpy|->}, because the second $(tp)$ does not
begin with a $p,$ and so does not give us a ``$pqp$'' to reduce.

So the equations on which we should perform the simplifications that
will yield the reductions~(\xppy{n}) for general $n$ are not obvious.
For instance, the next case,~(\xppy{3}),
can be obtained similarly by writing $(xp)\,qqq\,(yp)$
as $(xp)\,q\,(ps+tp)\,q\,(ps+tp)\,q\,(py),$ then using the expansion
\begin{equation}\begin{minipage}[c]{35pc}\label{d.insert-3}
$(xp)\,q\,(ps+tp)\,q\,(ps+tp)\,q\,(py)\ =
\ (xp)\,q\,(ps)\,q\,(ps+tp)\,q\,(py)\\[.3em]
\hspace*{2em}+ (xp)\,q\,(tp)\,q\,(ps)\,q\,(py)
+ (xp)\,q\,(ps+tp)\,q\,(tp)\,q\,(py)
- (xp)\,q\,(ps)\,q\,(tp)\,q\,(py),$
\end{minipage}\end{equation}
and reducing these terms.
That~\eqref{d.insert-3} is an identity of associative
rings is easy to check.
(Clearly, before checking it we can drop the initial $(xp)$ and
final $(py)$ of each term.)
But it is not obvious how we would have come up
with that identity to use.
In the next lemma we shall describe and prove
a sequence of identities to which the result of deleting
the initial $(xp)$ and final $(py)$ from each term
of~\eqref{d.insert-3} belongs, and the $\!n\!$-th step of which
will similarly allow us to obtain~(\xppy{n}).

(Though I believe in the principle of stating results in their
abstractly most natural form, since they may prove useful in
contexts very different from the ones for which they were
devised, the lemma below is unabashedly rigged to be used in
the specific context we will apply it in, for the sake of smoothing
that application.
I will re-state it in a more general form as
Corollary~\ref{C.multilin},
when we are through with the work of this section.)

\begin{lemma}\label{L.identity}
Let $n\geq 2$ be an integer, $F$ the free associative
$\!k\!$-algebra on generators $p,$ $s,$ $t,$ $q,$ and
$S(n)$ the set of elements of $F$ which can be obtained by the
following procedure:
\begin{equation}\begin{minipage}[c]{35pc}\label{d.insert}
Starting with the monomial $q^n,$ insert between
each pair of successive $\!q\!$'s either $(ps+tp),$ or $(ps)$
alone, or $(tp)$ alone, in such a way that every $q$ that is
immediately preceded by $(tp)$
is either immediately followed by $(ps)$ or is the final $q,$ and
every $q$ that is followed immediately by $(ps)$
is either immediately preceded by $(tp)$ or is the initial $q.$

\hspace*{1em}Then multiply the resulting element by
$(-1)^d,$ where $d$ is the number
of $\!q\!$'s in its expression which are preceded by
$(tp)$ and/or followed by $(ps).$
\textup{(}I.e., which are initial and followed by $(ps),$
or are simultaneously preceded by $(tp)$ and followed by $(ps),$
or are final and preceded by $(tp)).$
\end{minipage}\end{equation}

Then the sum in $F$ of the set $S(n)$ is $0.$
\end{lemma}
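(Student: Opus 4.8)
The plan is to recast the combinatorial recipe~\eqref{d.insert} as a weighted sum over words in a three-letter alphabet, and then to collapse that sum by a transfer-matrix computation in which two elementary identities force everything to cancel.

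First I would encode each element produced by the recipe. Name the three permitted insertions $S=(ps+tp)$, $P=(ps)$, and $T=(tp)$; then a construction is recorded by a word $c=c_1\cdots c_{n-1}\in\{S,P,T\}^{n-1}$, the letter $c_i$ naming the insertion placed in the $\!i\!$-th gap of $q^n$. The two conditions in~\eqref{d.insert} translate exactly into the statement that $c$ contains none of the forbidden consecutive factors $SP,\ PP,\ TS,\ TT$; equivalently, a $T$ may be followed only by a $P$, and a $P$ may be preceded only by a $T$. I would also check that for such an admissible word the sign exponent $d$ satisfies
\[ d\ \equiv\ \#T(c)+[\,c_1=P\,]\pmod 2, \]
where $\#T(c)$ is the number of $\!T\!$'s and $[\,c_1=P\,]$ is $1$ or $0$ according as the first letter is $P$ or not. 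The reason is that each internal $q$ contributes to $d$ exactly when the gap before it is $T$ (which, by admissibility, is the same as the gap after it being $P$), so the internal contributions count the $\!T\!$'s in positions $1,\dots,n-2$, while the final $q$ adds $[\,c_{n-1}=T\,]$ and the initial $q$ adds $[\,c_1=P\,]$.

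Next I would organize the signed sum as a transfer-matrix product. Writing each construction's element as $q\,(B_{c_1}q)(B_{c_2}q)\cdots(B_{c_{n-1}}q)$ with $B_S=ps+tp$, $B_P=ps$, $B_T=tp$, and absorbing the sign, I set
\[ w(S)=(ps+tp)q,\quad w(P)=(ps)q,\quad w(T)=-(tp)q, \]
with initial weights $v(S)=w(S)$, $v(P)=-w(P)$, $v(T)=w(T)$, the extra sign on $v(P)$ carrying the $[\,c_1=P\,]$ term. Using the admissibility graph as transition rule, the element of $S(n)$ attached to $c$ becomes $q\,v(c_1)w(c_2)\cdots w(c_{n-1})$, so the whole sum equals $q\,(u\,\Theta^{\,n-2}\mathbf 1)$, where $u=(v(S),v(P),v(T))$, where $\mathbf 1$ is the column vector of ones, and where $\Theta$ is the $3\times 3$ matrix over $F$ with $\Theta_{ab}=w(b)$ when $a\to b$ is admissible and $0$ otherwise. (One checks in passing that distinct admissible words give distinct elements of $F$, since the pattern $c$ is readable off from which of $ps,\,tp$ occur in each slot; so summing over the \emph{set} $S(n)$ is the same as summing over admissible words.)

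Finally I would exploit two identities that make the product vanish. The first is the base relation $v(S)+v(P)+v(T)=(ps+tp)q-(ps)q-(tp)q=0$. The second is that every row of $\Theta$ sums to the \emph{same} element $(ps)q$: from $S$ or from $P$ the admissible targets are $S,T$, giving $w(S)+w(T)=(ps+tp)q-(tp)q=(ps)q$, while from $T$ the only target is $P$, giving $w(P)=(ps)q$. Thus $\Theta\mathbf 1=(ps)q\cdot\mathbf 1$, and since this scalar does not depend on the state, an immediate induction gives $\Theta^{\,m}\mathbf 1=((ps)q)^{m}\mathbf 1$ for all $m\ge 0$ (keeping the scalar on the right, which is harmless because the entries are equal). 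Hence
\[ u\,\Theta^{\,n-2}\mathbf 1\ =\ \big(v(S)+v(P)+v(T)\big)\,\big((ps)q\big)^{\,n-2}\ =\ 0, \]
so $\sum_{S(n)}=q\cdot 0=0$, and the case $n=2$ is just the base relation. The main obstacle I anticipate is not the matrix bookkeeping but the sign reconciliation of the first step, namely verifying $d\equiv\#T(c)+[\,c_1=P\,]$ on admissible words and, equivalently, that after folding the signs into $w$ and $v$ the constant-row-sum property really holds; this is exactly where the asymmetric constraints of~\eqref{d.insert} and the trivial splitting $(ps+tp)=(ps)+(tp)$ conspire, and everything after it is a two-line induction.
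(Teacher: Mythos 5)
Your argument is correct, and it takes a genuinely different route from the paper's. The paper proves the lemma monomial by monomial: it expands every element of $S(n)$ into words with a single $(ps)$ or $(tp)$ in each gap, fixes one such word $w$, defines its marked $\!q\!$'s, and shows that exactly $2^e$ elements of $S(n)$ contain $\pm w$ ($e$ the number of marks), half with each sign -- a local $2^e$-fold cancellation. You instead cancel globally: after checking that admissibility is a nearest-neighbour constraint on the gap-word $c\in\{S,P,T\}^{n-1}$ (forbidden factors $SP,PP,TS,TT$) and that the sign is exactly $(-1)^{\#T(c)+[c_1=P]}$ -- both of which I verified, the latter using that admissibility makes ``preceded by $(tp)$ and/or followed by $(ps)$'' collapse to a count of $\!T\!$'s plus the initial correction -- the whole signed sum becomes $u\,\Theta^{n-2}\mathbf 1$, and it vanishes because every row of $\Theta$ sums to the same element $(ps)q$ while the entries of $u$ sum to $0$. (Your parenthetical check that distinct admissible words give distinct elements of $F$ is needed, since $S(n)$ is defined as a set, and it holds because the monomial supports already distinguish the words.) What each approach buys: the paper's marking argument identifies precisely which elements of $S(n)$ hit a given monomial, and that bookkeeping is reused heavily in \S\ref{S.1=:norm} when the ambiguities~\eqref{d.mspn} are resolved, so it is not dispensable there; your transfer-matrix computation is shorter, exposes the clean telescoping identity through powers of $(ps)q$, and, since it uses only multilinearity in the gaps, transports verbatim to the abstract form recorded as Corollary~\ref{C.multilin}.
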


\begin{proof}
Let us multiply out each element of the set $S(n)$ described
above to get a sum of monomials;
i.e., wherever a factor $(ps+tp)$ occurs in such a product, write the
product as the sum of a product having $(ps)$ and
a product having $(tp)$ in that position.
Thus, each of the resulting monomials will contain
$n$ $\!q\!$'s, with every pair of successive $\!q\!$'s having
either a $(tp)$ or a $(ps)$ between them.
Let $W(n)$ be the set of all monomials of this form.
We must prove that for every $w\in W(n),$ the sum of the
coefficients with which it occurs in members of $S(n)$ is $0.$

Within a monomial $w\in W(n),$ let us call an
occurrence of $q$ ``marked'' if it is
initial and followed by $(ps),$
or preceded by $(tp)$ and followed by $(ps),$
or final and preceded by $(tp).$
Every $w\in W(n)$ has at least one marked $q;$ for if there
is at least one factor $(ps),$ then the $q$ preceding the first
such factor (whether it is initial or preceded by a $(tp))$
will be marked, while if there are no factors $(ps),$ then
the final $q$ will be preceded by a $(tp),$ and hence marked.
On the other hand, two successive $\!q\!$'s can never both be marked,
since if they have a $(tp)$ between them, the left-hand $q$ won't
be marked, while if they have a $(ps)$ between them, the right-hand $q$
won't be marked.

Let $e\geq 1$ be the number of marked $\!q\!$'s in $w.$
I claim that there are exactly $2^e$ elements $v\in S(n)$ which contain
a $\pm w$ in their expansion, half of them with a plus sign
and half with a minus sign.
Indeed, given $w,$ all such elements
$v\in S(n)$ can be found by a construction that makes
the following binary choice at each marked $q$ of $w:$
If the marked $q$ in question is neither initial nor final, so that
it is preceded by a $(tp)$ and followed by a $(ps),$ the choice
is between keeping these factors $(tp)$ and $(ps)$ unchanged in $v,$
or replacing both with $(ps+tp).$
(The definition of $S(n)$ doesn't allow any other possibilities.)
If the $q$ in question is initial, the choice is simply between
keeping the following $(ps)$ unchanged or replacing it with $(ps+tp),$
while if it is final, the choice is between
keeping the preceding $(tp)$ unchanged or replacing it with $(ps+tp).$
(Since successive $\!q\!$'s cannot be marked, the effects
of choices at different marked $\!q\!$'s
will not conflict with each other.)
Finally, for factors $(tp)$ of $w$ that do not precede
marked $\!q\!$'s, and factors $(ps)$ that do not follow
marked $\!q\!$'s, there is no choice: we replace these with $(ps+tp).$

It is not hard to see from the definition of $S(n)$
that these $2^e$ ways of modifying $w$ indeed give precisely
the elements $v\in S(n)$ that have $w$ in their expansion.
Moreover, by the second paragraph of~\eqref{d.insert},
such an element of  $S(n)$ will bear a plus sign if the
number of marked $\!q\!$'s around which we did not choose to change
the adjacent factor(s) of $w$ to $(ps+tp)$
is even, a minus sign if that number is odd.
Hence half of the resulting occurrences
of $w$ have a plus sign and half have a minus
sign, so they sum to zero; and since this is true for each $w,$ we
get $\sum_{v\in S(n)} v =0,$ as claimed.
\end{proof}

Now returning to the $\!k\!$-algebra
$R'=R\lang q\mid p=pqp\rang,$ where $1=ps+tp$ in $R,$
let us map the free algebra of
the above lemma into $R'$ by sending each indeterminate
to the element of $R'$ denoted by the same letter.
The lemma then tells us that in $R',$
a certain sum of signed products is zero.
Hence if we choose any $(xp)\in B_{++}\cup B_{-+}$
and $(py)\in B_{++}\cup B_{+-},$ and multiply that sum
of products on the left by $(xp)$ and on the right
by $(py),$ the resulting sum of products is still zero.
In the expressions for these products,
we now can cross out each factor $(ps+tp),$ since it equals $1,$
and replace occurrences of $(xp)\,q\,(ps),$
$(tp)\,q\,(ps),$ and $(tp)\,q\,(py)$ by $(xps)_R,$
$(tps)_R,$ and $(tpy)_R$ respectively.
The one element of
$(xp)\,S(n)\,(py)$ in which no reduction of these three sorts is made
is the one that had $(ps+tp)$ in all $n{-}1$ positions,
and is now simply $(xp)\,q^n\,(py).$
Using the relation we have
obtained to express that product as a linear combination of
products with fewer remaining $\!q\!$'s, we get,

\begin{corollary}\label{C.identity}
For $R,$ $p,$ $B$ as in the preceding section, any
$(xp)\in B_{++}\cup B_{-+}$ and $(py)\in B_{++}\cup B_{+-},$
and any $n\geq 2,$
let $T((xp),n,(py))$ be the set of $\!k\!$-linear combinations
of words in $B$ formed by
modifying the word $(xp)\,q^n\,(py)$ as follows:

Choose any {\em nonempty} subset of the string of $n$ $\!q\!$'s
in that word, to be called ``marked'' $\!q\!$'s, such that
no two adjacent $\!q\!$'s are both marked.
If the first $q$ in the string is marked, replace the initial
term $(xp)\,q$ with $(xps)_R.$
If the last $q$ is marked, replace the final
term $q\,(py)$ with $(tpy)_R.$
Replace every marked $q$ that is neither initial nor final
with $(tps)_R.$
Finally, multiply the result by $-1$ if the number of marked $\!q\!$'s
was~{\em even}.

Then the reduction
\begin{equation}\tag{\xppy{n}}\begin{minipage}[c]{35pc}
$(xp)\,q^n\,(py)\ \mapsto\ \sum_{v\in T((xp),n,(py))}\ v$
\end{minipage}\end{equation}
corresponds to a relation holding in $R'.$
\textup{(}I.e., the elements of $R'$ represented
by the input and the output of~\textup{(\xppy{n})}
are equal.\textup{)}\qed
\end{corollary}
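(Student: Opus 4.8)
The plan is to transport the free-algebra identity of Lemma~\ref{L.identity} into $R'$ and then collapse it using the two structural relations $1=ps+tp$ (from~\eqref{d.1=}) and $pqp=p$. Note first that the Corollary only asserts an \emph{equality of elements} of $R'$, not any confluence property, so the whole task is to derive one identity; the confluence questions are deferred to the normal-form theorem. I would begin by applying the $\!k\!$-algebra homomorphism from the free algebra $F$ on $p,s,t,q$ to $R'$ that sends each indeterminate to the element of $R'$ with the same name. Lemma~\ref{L.identity} then yields $\sum_{\sigma\in S(n)}\sigma=0$ in $R'$ (each $\sigma$ carrying its sign $(-1)^d$), and left-multiplying by a fixed $(xp)\in B_{++}\cup B_{-+}$ and right-multiplying by a fixed $(py)\in B_{++}\cup B_{+-}$ preserves this, reducing matters to evaluating each term $(xp)\,\sigma\,(py)$.

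Next I would simplify a single such term. Every inserted factor $(ps+tp)$ equals $1$ and is deleted. The defining constraints on $S(n)$ are precisely what let $pqp=p$ absorb the surviving $(ps)$ and $(tp)$ factors: a surviving $(ps)$ is always immediately preceded by a $q$ that is either initial (hence, after the left multiplication, preceded by the outer $(xp)$) or preceded by a $(tp)$, so the subword $(xp)\,q\,(ps)$ or $(tp)\,q\,(ps)$ collapses to $(xps)_R$ or $(tps)_R$; dually, a surviving $(tp)$ is followed by a $q$ that is final or followed by a $(ps)$, so $(tp)\,q\,(py)$ or $(tp)\,q\,(ps)$ collapses to $(tpy)_R$ or $(tps)_R$. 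Each $q$ consumed this way is a single $q$ adjacent to exactly one surviving factor on each relevant side, and since no two such $\!q\!$'s are adjacent the collapses never conflict; a $q$ adjacent to no surviving factor remains bare. Thus $(xp)\,\sigma\,(py)$ reduces to a product of bare $\!q\!$'s and reduced factors of the three named shapes.

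The combinatorial heart is to match these reduced products with $T((xp),n,(py))$. I would verify that sending $\sigma$ to its set $D$ of ``marked'' $\!q\!$'s (those preceded by $(tp)$ and/or followed by $(ps)$, as in Lemma~\ref{L.identity}) is a bijection from $S(n)$ onto the subsets of the $n$ $\!q\!$'s containing no two adjacent ones: a marked interior $q$ forces $(tp)$ before and $(ps)$ after it, a marked initial (resp.\ final) $q$ forces a following $(ps)$ (resp.\ preceding $(tp)$), and every other position is forced to be $(ps+tp)$, so $\sigma$ is recovered from $D$. Under this bijection the reduced product $\rho_D$ obtained above is exactly the word-modification prescribed by the Corollary for $D$, the empty set corresponding to the all-$(ps+tp)$ element $\sigma_0$, whose reduced product is the bare word $(xp)\,q^n\,(py)$.

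Finally I would reconcile signs and solve. The term $\sigma_0$ carries sign $+1$, so $(xp)\,q^n\,(py)$ occurs with coefficient $+1$; transposing it flips its sign. A nonempty $D$ of size $d$ contributes $\sigma$ with Lemma-sign $(-1)^d$, i.e.\ $(-1)^d\rho_D$, while $T((xp),n,(py))$ assigns to $D$ the element $(-1)^{d+1}\rho_D$ (the coefficient $-1$ when $d$ is even, $+1$ when $d$ is odd); since $(-1)^d\rho_D=-\bigl((-1)^{d+1}\rho_D\bigr)$, the two conventions agree after the transposition, and collecting terms gives $(xp)\,q^n\,(py)=\sum_{v\in T((xp),n,(py))} v$, which is~(\xppy{n}). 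The step I expect to be the main obstacle is exactly this bookkeeping: confirming that the constraints of $S(n)$ give a clean bijection with the non-adjacent subsets, that ``marked'' in the lemma's sense coincides with ``marked'' in the corollary's sense, and that the $(-1)^d$ convention aligns with the corollary's ``$-1$ iff $d$ is even'' once the single all-$(ps+tp)$ term is moved across.
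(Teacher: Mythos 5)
Your proposal is correct and follows essentially the same route as the paper: the paper's proof of Corollary~\ref{C.identity} is exactly the paragraph preceding its statement, which maps the identity of Lemma~\ref{L.identity} into $R',$ multiplies by $(xp)$ and $(py),$ deletes the factors $(ps+tp)=1,$ collapses $(xp)\,q\,(ps),$ $(tp)\,q\,(ps),$ $(tp)\,q\,(py)$ to $(xps)_R,$ $(tps)_R,$ $(tpy)_R$ via $pqp=p,$ and transposes the unique unreduced term $(xp)\,q^n\,(py).$ Your added verifications (the bijection between $S(n)$ and the non-adjacent marked subsets, and the reconciliation of $(-1)^d$ with the corollary's ``$-1$ iff $d$ even'' convention after transposition) are correct details the paper leaves implicit, not a different argument.
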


Some remarks before we go further:

The number of terms in the set $S(n)$ of Lemma~\ref{L.identity}
(and hence in the reduction~(\xppy{n}), counting the
input term as well as the terms in $T((xp),n,(py))),$
is the $\!n{+}2\!$'nd
Fibonacci number, $F_{n+2},$ since this is known to
be the number of subsets of a sequence of
$n$ elements containing no two successive elements
\cite[p.\,14, Problem~1(b)]{comb}.

The reduction~\eqref{d.1=:xpqpy|->} clearly deserves to be
called~(\xppy{1}); but we assumed $n\geq 2$ in the preceding
lemma and corollary because the $n=1$ case differs from the
general case in a couple of ways.
On the one hand, when $n=1,$ the initial $q$ is also the final
$q,$ so we get an output term $(xpy)_R,$ which is
not one of the three sorts that occur when $n\geq 2.$
More important, the two sides of~\eqref{d.1=:xpqpy|->} do not
differ as a result of where factors $(ps+tp),$ $(ps)$ or $(tp)$
appeared in a term $v,$ but simply as to
whether or not one reduces the product $(xp)\,q\,(py)$
in the tautology $(xp)\,q\,(py)=(xp)\,q\,(py).$
Nevertheless,~\eqref{d.1=:xpqpy|->} has precisely the right form
to be described as reduction~(\xppy{1}), and we will so
consider it when we describe our normal form for~$R'.$
(We might consider the lone $q$ ``unmarked'' in the input
of~\eqref{d.1=:xpqpy|->} and ``marked'' in the output.)

Let us note, finally, that monomials occurring in the output
of~(\xppy{n}) may admit further reductions.
For instance, in the output term $(xps)_R\,q\,(py)$ of~(\xppy{2}),
some of the elements of $B$ appearing in $(xps)_R$ may
be of the form $(x'p),$
allowing reductions $(x'p)\,q\,(py)\mapsto (x'py)_R.$
(Indeed, all of them will have this form if $x$ is a right
multiple of $p$ in $R,$ since then $xps=x(1-tp)$ will be a right
multiple of $p.)$
However, this does not interfere with our application of
the Diamond Lemma.
The formulation of that lemma
does not require that the output of each reduction not
admit further reductions, but simply that it be a linear
combination of words smaller than
the word one started with, under an appropriate partial ordering.
\vspace{.5em}

We now turn to the other family of reductions we
encountered, beginning with~\eqref{d.1=:xpqtp|->}.
Since, as just noted, it is not essential that all
the terms of the outputs of our reductions
be, themselves, reduced, we can make a slight
simplification in the form of~(\xptp{2}),
replacing the final $(tp)$ in the
third output term by $(1-ps),$ to which it is equal in $R.$
Two terms then cancel, after which~(\xptp{2}) takes the form
\begin{equation}\tag{\xpTP{2}}\begin{minipage}[c]{35pc}
$(xp)\,qq\,(tp)\ \mapsto\ (xp)\,qq - (xps)_R\,q\,(ps)
- (xp)\,q\,(tps)_R.$
\end{minipage}\end{equation}

This leads to a version of the~\eqref{d.1=:xpqtp|->}-series
of reductions that is easily deduced from Corollary~\ref{C.identity}.

\begin{corollary}\label{C.identity_tp}
For every $n\geq 2$ and $(xp)\in B_{++}\cup B_{-+},$ the reduction
\begin{equation}\tag{\xpTP{n}}\begin{minipage}[c]{35pc}
$(xp)\,q^n\,(tp)\ \mapsto\ (xp)\,q^n - \sum_{v\in T((xp),n,(ps))} v,$
\end{minipage}\end{equation}
where $\sum_{v\in T((xp),n,(ps))} v$ is defined
as in Corollary~\ref{C.identity},
corresponds to a relation holding in $R'.$
\end{corollary}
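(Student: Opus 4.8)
The plan is to deduce this immediately from Corollary~\ref{C.identity} by trading the trailing factor $(tp),$ which is only a \emph{right} multiple of $p$ and so lies in $B_{-+}$ rather than in $B_{++}\cup B_{+-},$ for a \emph{left} multiple of $p$ to which that corollary does apply. The tool is the relation~\eqref{d.1=}: writing $tp=1-ps,$ I would expand, working in $R',$
\[
(xp)\,q^n\,(tp)\ =\ (xp)\,q^n\,(1-ps)\ =\ (xp)\,q^n-(xp)\,q^n\,(ps),
\]
which uses nothing beyond the defining relation of $R.$ This is the same substitution $(tp)=(1-ps)$ already exploited in passing from~(\xptp{2}) to~(\xpTP{2}), but applied now to the input monomial rather than to an output term.

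The point is that the surviving product $(xp)\,q^n\,(ps)$ now has its trailing factor in the admissible range: by the choice of basis in~\eqref{d.1=:B_}, the element $ps$ lies in $B_{+-}\subseteq B_{++}\cup B_{+-}.$ Hence Corollary~\ref{C.identity}, applied with $(py)$ taken to be $(ps),$ supplies the relation $(xp)\,q^n\,(ps)=\sum_{v\in T((xp),n,(ps))}v$ holding in $R'.$ Substituting this into the display above gives exactly
\[
(xp)\,q^n\,(tp)\ =\ (xp)\,q^n-\sum_{v\in T((xp),n,(ps))}v,
\]
the relation corresponding to~(\xpTP{n}).

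I do not expect a genuine obstacle here; the one conceptual step is recognizing that $(tp)$ must be rewritten through~\eqref{d.1=} rather than fed directly into Corollary~\ref{C.identity}, after which the statement is a one-line substitution. The only items warranting a line of verification are the membership $ps\in B_{++}\cup B_{+-}$ that licenses the appeal to Corollary~\ref{C.identity}, and, as a consistency check, the case $n=2$: there the admissible (nonempty, non-adjacent) markings of the two $\!q\!$'s are the two singletons, yielding $T((xp),2,(ps))=\{(xps)_R\,q\,(ps),\,(xp)\,q\,(tps)_R\},$ so the formula reproduces $(xp)\,qq-(xps)_R\,q\,(ps)-(xp)\,q\,(tps)_R,$ in agreement with~(\xpTP{2}).
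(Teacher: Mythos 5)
Your proposal is correct and is essentially the paper's own argument: the paper likewise applies Corollary~\ref{C.identity} with $py=ps$ (noting $ps\in B_{+-}$) and then trades $(ps)$ for $1-(tp)$ via~\eqref{d.1=}, merely performing the substitution in the opposite direction from yours. The two computations are the same one-line rearrangement.
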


\begin{proof}
Applying Corollary~\ref{C.identity} with $py=ps$
(which is allowable, since $ps\in B_{+-}),$ we get
$(xp)\,q^n\,(ps)=\sum_{v\in T((xp),n,(ps))} v$ in $R'.$
Rewriting the factor $(ps)$ on the left-hand
side as $1-(tp),$ multiplying out, moving the shorter of the
two resulting terms to the right-hand side, and changing all signs,
we get $(xp)\,q^n\,(tp)=(xp)\,q^n - \sum_{v\in T((xp),n,(ps))} v,$
the desired relation.
\end{proof}

We now have four families of reductions,~\eqref{d.1=:xy|->},
\eqref{d.1=:ps|->}, (\xppy{n}) and~(\xpTP{n}),
where in the last two, we from now on allow all $n\geq 1,$
counting~\eqref{d.1=:xpqpy|->} as (\xppy{1}),
and~\eqref{d.1=:xpqtp|->} as~(\xpTP{1});
and we wish to show that these together
determine a normal form for elements of $R'.$
We have established that they correspond to relations
holding in $R'.$
Moreover, they imply the defining relations for that
$\!k\!$-algebra in terms of our generating set $B\cup\{q\},$
since~\eqref{d.1=:xy|->} and~\eqref{d.1=:ps|->} determine
the structure of $R,$ while the imposed relation $pqp=p$
is the case of~(\xppy{1}) where
both $xp$ and $py$ are $p.$
It remains to find a partial ordering on words in $B\cup\{q\}$
respecting multiplication and having descending chain condition,
with respect to which all of these reductions are strictly decreasing,
and to prove that the ambiguities of the resulting reduction system
are resolvable.

The required partial ordering can be obtained by associating
to every word $w$ in $B\cup\{q\}$ the $\!3\!$-tuple with
first entry the number of $\!q\!$'s in $w,$
second entry the number of occurrences of members of $B$ in $w,$
and third entry the number of occurrences
of the particular element $(ps)\in B$ in $w,$ and considering one word
greater than another if the corresponding $\!3\!$-tuples
are so related under lexicographic order, while considering
distinct words which correspond to the same $\!3\!$-tuple incomparable.
It is easy to see that this ordering has descending chain
condition and respects formal
multiplication of words (juxtaposition), and that in
each of our reductions, all words of the output are strictly less
than the input word.
(The first coordinate of the $\!3\!$-tuple is enough to
show this last property for the reductions~(\xppy{n});
the second coordinate is needed for the reductions~\eqref{d.1=:xy|->},
and for the first term of the output of~(\xpTP{n}), while the
third coordinate is only needed for~\eqref{d.1=:ps|->}.)

Proving resolvability of ambiguities will, of course, be the hard task.

The ambiguities among
the cases of~\eqref{d.1=:xy|->} and~\eqref{d.1=:ps|->}
are, as usual, resolvable because they describe the
structure of the associative $\!k\!$-algebra~$R.$

I claim that ambiguities based on the fact that a word can be
reduced either by~(\xppy{n}) or by~\eqref{d.1=:xy|->}, i.e., those
involving words of the forms $(xp)\,q^n\,\cdot (py)\cdot z$ and
$x\cdot (yp)\cdot q^n\,(pz),$ are also easily shown to be resolvable.
As in the case of the ambiguities~\eqref{d.xpqpyz}
and~\eqref{d.xypqpz} considered in \S\ref{S.norm}, the
reason will be, in the former case, that right multiplication
by $z$ carries $pR$ left $\!R\!$-linearly into itself, and in the
latter, that left multiplication by $x$ carries $Rp$ into itself
right $\!R\!$-linearly;
so that whether we apply the reduction~(\xppy{n}) before
or after that operation, we get the same result.
For more detail, let us, in the case of $(xp)\,q^n\,\cdot (py)\cdot z,$
subdivide the summands in $\sum_{v\in T((xp),n,(py))} v$ in~(\xppy{n})
according to whether they end with $(py)$ or $(tpy)_R,$ writing
that reduction as
\begin{equation}\begin{minipage}[c]{35pc}\label{d.A,B}
$(xp)\,q^n\,(py)\ \mapsto
\ \sum_{v\in T((xp),n,(py))}\ v\ =
\ A((xp),n)\,(py) + B((xp),n)\,(tpy)_R.$
\end{minipage}\end{equation}
The reader can now easily verify that whichever of the
two competing reductions we perform first on
$(xp)\,q^n\,\cdot (py)\cdot z,$
the output can be reduced to $A((xp),n)(pyz)_R + B((xp),n)(tpyz)_R.$

The case of $x\cdot (yp)\cdot q^n\,(pz)$ is handled similarly, using
a decomposition of the elements of $T((xp),n,(py))$
by initial rather than final factors, which we
write down for later reference as
\begin{equation}\begin{minipage}[c]{35pc}\label{d.C,D}
$(xp)\,q^n\,(py)\ \mapsto\ \sum_{v\in T((xp),n,(py))}\ v\ =
\ (xp)\,C(n,(py)) + (xps)_R\,D(n,(py)),$
\end{minipage}\end{equation}
(though in the present application,
the roles of the $(xp)$ and $(py)$ in the above formula
are played by the elements $(yp)$ and $(pz)).$

The resolution of
ambiguities arising from words $x\cdot (yp)\cdot q^n\,(tp),$ which
can be reduced using either~\eqref{d.1=:xy|->} on the
left or~(\xpTP{n}) on the right, is verified similarly.

A little more complicated is the case of $(xp)\,q^n\cdot (tp)\cdot y,$
which can be reduced either by applying~(\xpTP{n}) on the left,
or~\eqref{d.1=:xy|->} on the right.
We recall that the result of reducing
$(xp)\,q^n(tp)$ by~(\xpTP{n})
is $(xp)\,q^n$ minus the result of reducing
$(xp)\,q^n(ps)$ by~(\xppy{n}); so applying this reduction in
$(xp)\,q^n (tp)\,y,$ and then making appropriate applications
of~\eqref{d.1=:xy|->}, we get $(xp)\,q^n y$
minus the result of reducing
$(xp)\,q^n(psy)_R$ by~(\xppy{n}).
On the other hand, if we begin by
applying~\eqref{d.1=:xy|->}, we get $(xp)\,q^n (tpy)_R.$
Since $tp=1-ps$ in $R,$ we have $(tpy)_R = y - (psy)_R,$
and this leads to a decomposition of $(xp)\,q^n (tpy)_R$
as the difference of two terms, one of which is $(xp)\,q^n y,$
while the other, $(xp)\,q^n(psy)_R,$ can be reduced as just mentioned.
So both reductions lead to $(xp)\,q^n y$ minus the result of reducing
$(xp)\,q^n(psy)_R$ using~(\xppy{n}), showing
that this ambiguity is also resolvable.

And the ambiguities coming from words $(xp)\,q^n\cdot(ps),$ which can
be reduced either by applying~(\xppy{n}) to the whole expression,
or~\eqref{d.1=:ps|->} to the final factor, are resolvable because of
the reductions~(\xpTP{n}), which were introduced precisely to
handle them.
(These ambiguities are, incidentally,
what are called in~\cite{<>} ``inclusion ambiguities'', where
the input-word of one reduction is a subword of the input-word of
another reduction.
All other ambiguities occurring in this note are
``overlap ambiguities''.)

We are left with the ambiguities resulting from the
overlap of two words both of which admit reductions in
our \eqref{d.1=:xpqpy|->}-series and/or
our \eqref{d.1=:xpqtp|->}-series.

Again, some of these are fairly straightforward to show resolvable.
Consider first a word $(xp)\,q^m\cdot y \cdot q^n (pz)$
where $y=py'=y''p\in B_{++},$ with $m,n\geq 2,$ to which we can apply
either~(\xppy{m}) on the left, or~(\xppy{n}) on the right.
It is not hard to verify that whichever of those operations
we apply first, the other will then be applicable to all
the words in the resulting expression.
(For instance, though the result of first applying~(\xppy{m})
will include some terms in which the factor $y=py'$ has been
absorbed into a product $(tpy')_R,$ the relation $py'=y''p$
allows us to rewrite this as $(ty''p)_R,$ so it lies in $Rp,$
hence is a $\!k\!$-linear combination of generators
in $B_{++}\cup B_{-+},$ allowing a subsequent application
of~(\xppy{n}) to each term.)
One finds that the result of either order of
reductions is the sum of a set of terms which can be constructed
as follows:
Starting with the word $(xp)\,q^m\cdot y \cdot q^n (pz),$
``mark'' an arbitrary subset of the $\!q\!$'s, subject
to the condition that the marked subset of the first $m$
$\!q\!$'s be nonempty and contain no pair of adjacent $\!q\!$'s,
and that the marked subset of the last $n$
$\!q\!$'s likewise be nonempty and contain no adjacent pair.
Now, as before, we make appropriate replacements involving the marked
$\!q\!$'s.
What most of these should be are clear from
the statement of Corollary~\ref{C.identity}.
For instance, if the first $q$ is marked,
replace $(xp)\,q$ by $(xps)_R;$
if the last $q$ is marked, replace $q\,(pz)$ by $(tpz)_R;$
if a $q$ that is neither initial nor final in the string
$q^m$ or $q^n$ is marked, replace it with $(tps)_R.$
Likewise, if the last $q$ before the factor $y$ is marked,
but the $q$ following the $y$ is not, then we replace
$q\,y=q\,py'$ by $(tpy')_R,$ while if
the $q$ following the $y$ is marked but not the one that
precedes it, we replace $y\,q=(y''p)\,q$ by $(y''ps)_R.$
But what if both of those $\!q\!$'s are marked?
Then we find that the results of performing either of those
two replacements, followed by the reductions
corresponding to the other, give the same result.
Indeed, the replacements correspond to two ways of
reducing $(tp)\,q\,y\,q\,(ps)\,,$ which is
an instance of~\eqref{d.xpqyqpz}, the resolvability of
which was verified in~\S\ref{S.norm};
both reductions of that factor give $(tys)_R.$
So this ambiguity is also resolvable.

The corresponding ambiguities with $m$ and/or $n$ equal to $1$
are resolved in the same way, with the obvious
adjustments; e.g., if $m=1,$ then instead of $q(py')$ being replaced
by $(tpy')_R$ in some terms of~\eqref{d.A,B},
we will have $(xp)\,q\,(py')$ replaced by $(xpy')_R.$
(The case $m=n=1$ is precisely~\eqref{d.xpqyqpz}.)

And the ambiguities $(xp)\,q^m\cdot y \cdot q^n (tp),$
which can be reduced by applying~(\xppy{m}) on
the left, or~(\xpTP{n}) on the right, are handled like the above,
{\em mutatis mutandis}.

There remain two sorts of ambiguities, which get a bit more interesting.
These will be given as~\eqref{d.mspn} and~\eqref{d.mspn+} below,
but let us prepare for them with the following observation.
Up to this point, ambiguities involving
reductions~(\xppy{n}) and/or~(\xpTP{n}) for certain
values of $n$ were resolved using only reductions indexed by
the same value(s) of $n$ and the value $1.$
Now if this were the case for the remaining sorts of ambiguities
as well, then for any set $N$ of positive integers containing~$1,$
the system of reductions given by~\eqref{d.1=:xy|->},
\eqref{d.1=:ps|->}, and the~(\xppy{n}) and~(\xpTP{n}) for $n\in N$
would have all ambiguities resolvable, and so would determine a
basis of monomials for the $\!k\!$-algebra presented by the generating
set $B\cup\{q\}$ and the relations corresponding to those reductions.
But we have seen that the relations corresponding to~\eqref{d.1=:xy|->},
\eqref{d.1=:ps|->}, and~(\xppy{1}) are sufficient to
present $R',$ in which all of the~(\xppy{n}) and~(\xpTP{n}) are
satisfied; so all such subsystems of our system of reductions
would yield $\!k\!$-bases for $R'.$
Yet some of these bases of $R'$ (those arising from larger sets $N)$
would be properly contained in others (arising from smaller sets $N),$
since a larger set of reductions would make more words reducible.

Since a proper inclusion among bases of
$R'$ is impossible, it {\em must} be true that in resolving some
of the ambiguities we have not yet considered,
reductions with larger subscripts than those
involved in the ambiguities themselves must be used.
And indeed, we saw in the preceding section that trying to
resolve the ambiguity $(xp)\,q\cdot(tp)\cdot q\,(py),$
arising from the reductions~(\xpTP{1}) and~(\xppy{1}), required
us to introduce the new reduction~(\xppy{2}).
So with the expectation that this will happen, let us look
at the two remaining families of ambiguities.

Consider first an ambiguously reducible word of the form
\begin{equation}\begin{minipage}[c]{35pc}\label{d.mspn}
$(xp)\,q^m\cdot(tp)\cdot q^n\,(py).$
\end{minipage}\end{equation}
In analyzing the effects of our reductions on this
expression, we shall use both of the notations introduced
in~\eqref{d.A,B} and~\eqref{d.C,D}.
Observe that in~\eqref{d.A,B}, the term $A((xp),n)(py)$
arises from those ways of marking $q^n$ such that
at least one $q$ is marked, but the last $q$ is {\em not} marked
(since $(py)$ has not been absorbed in a term $(tpy)_R),$
while $B((xp),n)(tpy)_R$ arises from those ways of marking $q^n$
under which a set of $\!q\!$'s {\em including} the last $q$ is marked.
Similarly, the two terms of~\eqref{d.C,D} arise from ways of
marking $q^n$ so that the {\em first}
$q$ is not, respectively, is, marked.
Note, finally that in the notation of~\eqref{d.A,B}, the output of
the reduction~(\xpTP{m}) is
\begin{equation}\begin{minipage}[c]{35pc}\label{d.spA,B}
$(xp)\,q^m - A((xp),m)(ps) - B((xp),m)(tps)_R.$
\end{minipage}\end{equation}

Now starting with the monomial~\eqref{d.mspn}, if we apply, on the one
hand,~(\xpTP{m}) with its output written as~\eqref{d.spA,B}
to the terms surrounding the first dot, and, on the other
hand,~(\xppy{n}) with output written as in~\eqref{d.C,D}
to the terms surrounding the second dot, then the equation equating the
results, which we hope to show can be established by further reductions
in our system, is
\begin{equation}\begin{minipage}[c]{35pc}\label{d.mspnABCD}
$(xp)\,q^{m+n}\,(py)\ -\ A((xp),m)\,(ps)\,q^n\,(py)\ -
\ B((xp),m)\,(tps)_R\,q^n\,(py)\\[.3em]
\hspace*{5em}=\ (xp)\,q^m (tp)\,C(n,(py))\ +
\ (xp)\,q^m (tps)_R\,D(n,(py)).$
\end{minipage}\end{equation}

The very first term above is the input of the
reduction~(\xppy{m+n}), so our ambiguity will be resolvable
if, on moving the other terms
of~\eqref{d.mspnABCD} to the right-hand side,
the value we end up with there, namely
\begin{equation}\begin{minipage}[c]{35pc}\label{d.mspn_1}
$A((xp),m)\,(ps)\,q^n\,(py)\ +\ B((xp),m)\,(tps)_R\,q^n\,(py)\ +\\[.3em]
\hspace*{5em}(xp)\,q^m (tp)\,C(n,(py))\ +\ (xp)\,q^m (tps)_R\,D(n,(py))$
\end{minipage}\end{equation}
can be reduced to the output of~(\xppy{m+n}).
We can, in fact, recognize two of the
terms of~\eqref{d.mspn_1} as parts of that output.
Namely, $B((xp),m)\,(tps)_R\,q^n\,(py)$ can be seen to be
the sum of all terms gotten from $(xp)\,q^{m+n}\,(py)$
by marking some subset of the first $m$ $\!q\!$'s
which includes the $\!m\!$-th, and then making the replacements
described in Corollary~\ref{C.identity}.
Likewise, $(xp)\,q^m (tps)_R\,D(n,(py))$ is the sum of the terms
we get on marking some subset of the last $n$ $\!q\!$'s
which includes the first of these, and making the
appropriate replacements.
So it suffices to show that the first and
third terms of~\eqref{d.mspn_1} can be reduced to the
sum of the other terms in the output of~(\xppy{m+n}).
(As they stand, the do not consist of such terms, since terms in
the outputs of our \eqref{d.1=:xpqpy|->}-series reductions
have no internal factors $(ps)$ or $(tp).)$

The term $A((xp),m)\,(ps)\,q^n\,(py)$ can be reduced by
applying~\eqref{d.1=:ps|->} to the factor $(ps),$ while the
term $(xp)\,q^m\,(tp)\,C(n,(py))$ can be reduced by
applying~(\xpTP{m}) to its initial factors.
Then these two remaining terms of~\eqref{d.mspn_1} become
\begin{equation}\begin{minipage}[c]{35pc}\label{d.mspn_2}
$A((xp),m)\,q^n\,(py)\ -\ A((xp),m)\,(tp)\,q^n\,(py)\ +\\[.3em]
\hspace*{2em}(xp)\,q^m\,C(n,(py))
-\ A((xp),m)\,(ps)\,C(n,(py))\ -\ B((xp),m)\,(tps)_R\,C(n,(py)).$
\end{minipage}\end{equation}

Of these terms, the first can be seen to consist of those summands
in the output of~(\xppy{m+n}) in which, again, only a subset
of the first $m$ $\!q\!$'s have been marked, this time a
subset which does {\em not} include the $\!m\!$-th $q;$ and the third
term likewise consists of those summands in which a subset
of the last $n$ $\!q\!$'s have been marked, but that
subset does not include the first of these.
Dropping these two terms from our calculation,
the first of the remaining terms can be reduced using~(\xppy{n}).
After performing that reduction, and again using~\eqref{d.C,D}
to describe the output, the terms that remain to be considered
take the form
\begin{equation}\begin{minipage}[c]{35pc}\label{d.mspn_3}
$- A((xp),m)\,(tp)\,C(n,(py))
- A((xp),m)\,(tps)_R\,D(n,(py))\\[.3em]
\hspace*{2em}- A((xp),m)\,(ps)\,C(n,(py))
- B((xp),m)\,(tps)_R\,C(n,(py)).$
\end{minipage}\end{equation}
The first and third of these can be combined
using the relation $ps+tp=1$ (or, formally, by applying
the reduction~\eqref{d.1=:ps|->} to the latter, and adding
the result to the former), giving
\begin{equation}\begin{minipage}[c]{35pc}\label{d.mspn_4}
$- A((xp),m)\,C(n,(py))
- A((xp),m)\,(tps)_R\,D(n,(py))
- B((xp),m)\,(tps)_R\,C(n,(py)).$
\end{minipage}\end{equation}

I claim that this expression gives precisely the remaining
terms of the output of~(\xppy{m+n}), i.e., those in which
there are marked $\!q\!$'s among both the first $m$ and the last $n.$
Indeed, the first summand in~\eqref{d.mspn_4} contains the terms of
this sort in which neither the $\!m\!$-th nor the $\!(m{+}1)\!$-st $q$
is marked; the next gives those in which the $\!m\!$-th $q$ is
again not marked, but the $\!(m{+}1)\!$-st is, and the last
gives those in which the $\!m\!$-th is marked, but
not the $\!(m{+}1)\!$-st.
Since adjacent $\!q\!$'s cannot both be marked, these are
all the possibilities.

But are the negative signs in~\eqref{d.mspn_4} what we want?
I agonized over this till I finally saw that they are.
In the description of the output of our reductions
in the \eqref{d.1=:xpqpy|->}-series, a term is assigned
a minus sign if it involves an {\em even} number of marked $\!q\!$'s,
a plus sign if it involves an {\em odd} number.
(This is the result of moving these terms of the expression proved
to sum to zero in Lemma~\ref{L.identity} to the opposite
side of the equation from $(xp)\,q^n (py).)$
Hence when we form a product such as
$A((xp),m)\,C(n,(py))$ in~\eqref{d.mspn_4},
the terms involving an even total number of marked $\!q\!$'s
will end up with a plus sign and those with an odd total number
will have a minus sign.
So this and the other terms of~\eqref{d.mspn_4}
indeed need negative signs to give the corresponding
summands in the output of~(\xppy{m+n}).

Once again, the verifications of the cases where $m$ and/or $n$ is
$1$ differ only in minor formal details from the above.

And the resolvability of the one remaining sort of ambiguity,
\begin{equation}\begin{minipage}[c]{35pc}\label{d.mspn+}
$(xp)\,q^m\cdot(tp)\cdot q^n\,(tp)$
\end{minipage}\end{equation}
reduces to the resolvability
of the case of~\eqref{d.mspn} where $(py)$ is $(ps),$
via Corollary~\ref{C.identity_tp}.
These computations establish

\begin{theorem}\label{T.1=}
Let $R$ be a $\!k\!$-algebra, let $p$ be an element of $R$
such that $1\in pR+Rp$ but $1\notin pR$ and $1\notin Rp,$
choose $s,t\in R$ such that $1=ps+tp$ as
in~\eqref{d.1=},
let $B\cup\{1\}$ be a spanning set for $R$
satisfying~\eqref{d.1=:B=} and~\eqref{d.1=:B_}, and let
\begin{equation}\begin{minipage}[c]{35pc}\label{d.1=:R'}
$R'\ =\ R\lang q\mid pqp=p\rang.$
\end{minipage}\end{equation}

Then $R'$ has a $\!k\!$-basis given by all
words in the generating set $B\cup\{q\}$ that contain no
subwords of any of the following forms:
\begin{equation}\begin{minipage}[c]{35pc}\label{d.1=:xy}
$xy$ \quad with $x,y\in B,$
\end{minipage}\end{equation}
\begin{equation}\begin{minipage}[c]{35pc}\label{d.1=:ps}
$(ps),$
\end{minipage}\end{equation}
\begin{equation}\begin{minipage}[c]{35pc}\label{d.1=:xpqpy}
$(xp)\,q^n\,(py)$ \quad with $xp\in B_{++}\cup B_{-+},$
$py\in B_{++}\cup B_{+-},$ and $n\geq 1,$
\end{minipage}\end{equation}
\begin{equation}\begin{minipage}[c]{35pc}\label{d.1=:xpqtp}
$(xp)\,q^n\,(tp)$ \quad with $xp\in B_{++}\cup B_{-+}$
and $n\geq 1.$
\end{minipage}\end{equation}

The reduction to the above normal
form may be accomplished by the systems of
reductions \eqref{d.1=:xy|->},
\eqref{d.1=:ps|->},
\textup{(\xppy{n})}~\textup{(}as shown in
Corollary~\ref{C.identity}, but with~\eqref{d.1=:xpqpy|->}
also included as \textup{(\xppy{1}))} and
\textup{(\xpTP{n})} \textup{(}as shown in
Corollary~\ref{C.identity_tp}, but with~\eqref{d.1=:xpqtp|->}
also included as \textup{(\xpTP{1}))}.\qed
\end{theorem}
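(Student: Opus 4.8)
The plan is to invoke the ring-theoretic Diamond Lemma of \cite[Theorem~1.2]{<>}, exactly as in the proof of Theorem~\ref{T.1_notin}, applied to the presentation of $R'$ by the generating set $B\cup\{q\}$ and the relations corresponding to~\eqref{d.1=:xy|->}, \eqref{d.1=:ps|->}, (\xppy{n}) and~(\xpTP{n}). Three things must be verified. First, these reductions correspond to relations valid in $R'$ which jointly present it: validity is supplied by Corollaries~\ref{C.identity} and~\ref{C.identity_tp}, while the presentation claim holds because~\eqref{d.1=:xy|->} and~\eqref{d.1=:ps|->} already present $R$ and the instance of~(\xppy{1}) with $xp=py=p$ is the defining relation $pqp=p$. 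Second, one needs a semigroup partial ordering with descending chain condition under which every reduction strictly lowers each output word; I would use the lexicographic order on the triple (number of $\!q\!$'s, number of letters from $B$, number of occurrences of the distinguished element $(ps)$), since the first coordinate governs the~(\xppy{n}), the second the~\eqref{d.1=:xy|->} and the leading term of~(\xpTP{n}), and the third the lone reduction~\eqref{d.1=:ps|->}. Third, and this is the substance of the argument, every ambiguity of the reduction system must be shown resolvable.

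For the resolvability step I would first dispose of the routine families. Ambiguities among~\eqref{d.1=:xy|->} and~\eqref{d.1=:ps|->} are resolvable because they merely encode associativity in $R$. Ambiguities pairing a~(\xppy{n}) or~(\xpTP{n}) against an adjacent~\eqref{d.1=:xy|->} are resolvable by the device used for~\eqref{d.xpqpyz} and~\eqref{d.xypqpz} in~\S\ref{S.norm}: right multiplication by the trailing $B$-factor carries $pR$ into itself left $\!R\!$-linearly (and, dually, left multiplication carries $Rp$ into itself right $\!R\!$-linearly), so the two orders of reduction commute; here the decompositions~\eqref{d.A,B} and~\eqref{d.C,D} of the output of~(\xppy{n}) by its final, respectively initial, factor make the bookkeeping explicit. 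The inclusion ambiguities $(xp)\,q^n\cdot(ps)$, pitting~(\xppy{n}) against~\eqref{d.1=:ps|->}, are resolvable precisely because the reductions~(\xpTP{n}) were manufactured to handle them. Finally, overlaps $(xp)\,q^m\cdot y\cdot q^n\,(pz)$ with $y=py'=y''p\in B_{++}$ (and their variants ending in $(tp)$) reduce, after using $py'=y''p$ to keep the intermediate products inside $Rp$, to the already-verified resolvability of~\eqref{d.xpqyqpz}.

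This leaves the two genuinely new families,~\eqref{d.mspn} and~\eqref{d.mspn+}, where the combinatorics of Lemma~\ref{L.identity} earns its keep. For~\eqref{d.mspn}, applying~(\xpTP{m}) at the first overlap and~(\xppy{n}) at the second produces a sum whose leading term is the input $(xp)\,q^{m+n}\,(py)$ of~(\xppy{m+n}); the plan is to show that, after further reductions by~\eqref{d.1=:ps|->}, (\xpTP{m}) and~(\xppy{n}), the remaining terms reassemble exactly into the output of~(\xppy{m+n}). The structural fact that makes this possible is that the output of each~(\xppy{n}) is indexed by the nonempty subsets of its $n$ $\!q\!$'s containing no two adjacent elements; when one forms the products $A\cdot C$, $A\cdot D$ and $B\cdot C$ from~\eqref{d.A,B} and~\eqref{d.C,D}, the ``no two adjacent'' condition at the seam between the first $m$ and the last $n$ $\!q\!$'s is exactly what lets the marking patterns glue into those of the length-$(m{+}n)$ reduction. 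Case~\eqref{d.mspn+} then reduces to~\eqref{d.mspn} with $(py)$ taken to be $(ps)$, via Corollary~\ref{C.identity_tp}.

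The main obstacle I anticipate is not a single hard idea but the sign bookkeeping in~\eqref{d.mspn}: under the convention that a term of~(\xppy{n}) carries a minus sign when it has an even number of marked $\!q\!$'s, one must check that the signs produced on multiplying the two half-outputs together agree term by term with those prescribed for~(\xppy{m+n}). This amounts to verifying that the parity of the total marked set is additive across the seam, which holds once the subset matching is set up correctly, but it is the step that most invites error. The cases with $m$ or $n$ equal to $1$ differ only in that an end reduction yields a term $(xpy')_R$ in place of $(tpy')_R$, and introduce no new difficulty.
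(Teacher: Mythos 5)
Your proposal is correct and follows essentially the same route as the paper's own argument: the same Diamond Lemma setup, the same lexicographic $3$-tuple ordering, the same disposal of the routine ambiguities via the decompositions~\eqref{d.A,B} and~\eqref{d.C,D}, and the same resolution of~\eqref{d.mspn} by assembling the products $A\cdot C,$ $A\cdot D,$ $B\cdot C$ into the output of~(\xppy{m+n}), with~\eqref{d.mspn+} reduced to the case $(py)=(ps).$ The one point you flag as inviting error -- the signs -- is exactly where the paper pauses too, and the resolution is that the product of two signed half-outputs comes out with the \emph{opposite} of the sign prescribed by~(\xppy{m+n}), which is why those three products must enter with overall minus signs.
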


Combining this with the results of \S\ref{S.norm} and~\S\ref{S.1-sided},
we see that we have determined the structure of
\mbox{$R\lang q\mid pqp=p\rang$} for all cases of a
$\!k\!$-algebra $R$ and an element $p\in R.$

\section{Some consequences, and a couple of loose ends}\label{S.&}

The proof of Theorem~\ref{T.1=} extends without difficulty to give
the analog of Proposition~\ref{P.M_norm}, with
``$\!p\!$-tempered $\!R\!$-module'' still defined as in
Definition~\ref{D.tempered}.
As in that proposition, we take a $\!k\!$-basis $C=C_+\cup C_-$
for $M,$ and supplement the reductions we
have used in the normal form for $R'$ with corresponding
reductions in which the leftmost factor, $x$ or $(xp),$
is replaced with an element of $C;$
an arbitrary element in the case of the $x$ of~\eqref{d.1=:xy},
a member of $C_+$
in the case of the $(xp)$ of~\eqref{d.1=:xpqpy} and~\eqref{d.1=:xpqtp}.

We will not write the result out in detail;
but let us note a common feature of this and our other results on the
extension of $\!p\!$-tempered $\!R\!$-modules to $\!R'\!$-modules.

\begin{corollary}\label{C.tempered}
If $R$ is a $\!k\!$-algebra, $p$ any element of $R,$ and
$(M,M_+)$ a $\!p\!$-tempered $\!R\!$-module, as defined
in Definition~\ref{D.tempered}, then the canonical
map $M\to (M,M_+)\otimes_{(R,p)} R'$ is an embedding such
that \textup{(}identifying $M$ with its image under that map\textup{)}
we have $M_+= M\cap(M\otimes_{(R,p)} R')\,p.$\qed
\end{corollary}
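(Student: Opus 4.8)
The plan is to deduce the statement, in each of the three cases into which the paper splits the analysis of $R',$ from the normal form already available for $(M,M_+)\otimes_{(R,p)}R',$ arguing uniformly as in the proof of Corollary~\ref{C.M->MOX}. The case $1\notin pR+Rp$ is Corollary~\ref{C.M->MOX} itself; the one-sided cases $1\in pR$ and/or $1\in Rp$ are covered by the module normal form of \S\ref{S.1-sided}; and the case $1\in pR+Rp-(pR\cup Rp)$ is covered by the module analogue of Theorem~\ref{T.1=} described at the start of \S\ref{S.&}. What I would extract from each of these normal forms are two features, after which the proof is case-independent (and $(M\otimes_{(R,p)}R')$ is read as $((M,M_+)\otimes_{(R,p)}R')$).

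The first feature is that the one-letter words given by the basis $C=C_+\cup C_-$ of $M$ are themselves reduced and are mapped to distinct basis elements of $(M,M_+)\otimes_{(R,p)}R';$ hence the canonical map $M\to(M,M_+)\otimes_{(R,p)}R'$ carries a basis of $M$ injectively into a basis of the target, and so is an embedding. The second feature concerns the reduction of $xqp.$ For $x\in C_+,$ the reduction that strips a trailing ``$q\,(py)$'', taken with $py=p$ and hence $y=1,$ sends $xqp\mapsto(x\cdot1)_M=x;$ here the output is unambiguous despite the non-uniqueness of $y$ with $py=p,$ precisely because the $\!p\!$-tempered hypothesis forces the right annihilator of $p$ to kill $x\in M_+$ (as noted after Proposition~\ref{P.M_norm}). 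For $x\in C_-,$ by contrast, the word $xqp$ is already reduced and differs from $x.$ Consequently, for $x=\sum_i\lambda_i c_i$ with $c_i\in C,$ the word $xqp$ reduces to $\sum_{c_i\in C_+}\lambda_i c_i+\sum_{c_i\in C_-}\lambda_i\,c_iqp,$ and this equals $x$ in $(M,M_+)\otimes_{(R,p)}R'$ if and only if $\lambda_i=0$ for every $c_i\in C_-,$ i.e.\ if and only if $x\in M_+.$

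Given these, the two conclusions follow as in Corollary~\ref{C.M->MOX}. Injectivity is the first feature. For the identity, I would note that in any right $\!R'\!$-module every right multiple of $p$ is annihilated by $qp-1$ (since $pqp=p$ gives $wp\cdot qp=w\cdot pqp=wp$), while conversely $v(qp-1)=0$ forces $v=vqp=(vq)p;$ hence, identifying $M$ with its image, $M\cap((M,M_+)\otimes_{(R,p)}R')p=\{x\in M\mid x(qp-1)=0\}.$ By the second feature, $x(qp-1)=0,$ i.e.\ $xqp=x,$ holds exactly when $x\in M_+,$ which is the asserted equality. (The inclusion $M_+\subseteq\{x\mid x(qp-1)=0\}$ is anyway built into the construction, which divides $M\otimes_R R'$ by all $xqp-x$ with $x\in M_+.$)

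The step I expect to require the most care is the \emph{uniformity} of the second feature across the three cases: one must confirm that the reduction $xqp\mapsto x$ for $x\in C_+$ and the irreducibility of $xqp$ for $x\in C_-$ really hold in each module reduction system. In the case $1\in pR+Rp-(pR\cup Rp)$ this rests on the module normal form asserted in \S\ref{S.&}, whose verification amounts to checking that adjoining the $\!C\!$-indexed reductions to the system of Theorem~\ref{T.1=} leaves every ambiguity resolvable; these new ambiguities are the evident analogues, with left factor drawn from $C$ rather than $B,$ of ones already resolved, so the text's claim that the argument ``extends without difficulty'' is what has to be made precise. In the one-sided case the analysis of \S\ref{S.1-sided} shows $M_+$ is forced to equal $Mp$ and that membership in $Mp$ is detected by $x=xq_0p,$ making both features transparent there.
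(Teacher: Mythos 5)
Your proposal is correct and is essentially the paper's own (largely implicit) justification: the corollary is stated with a \qed precisely because it is meant to follow, case by case, from the module normal forms of Proposition~\ref{P.M_norm}, of \S\ref{S.1-sided}, and of the module analogue of Theorem~\ref{T.1=} sketched at the start of \S\ref{S.&}, combined with the observation (already used in Corollary~\ref{C.M->MOX}) that annihilation by $qp-1$ characterizes right multiples of $p.$ Your two ``features'' are exactly the common content the paper extracts from those normal forms, and your closing caveat about the unwritten verification in the case $1\in pR+Rp-(pR\cup Rp)$ correctly identifies the only point the paper leaves to the reader.
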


We can also ask when an inclusion of $\!k\!$-algebras leads to an
embedding of extensions of these algebras by universal inner inverses.
This is answered by

\begin{proposition}\label{P.R0,R1}
Let $R^{(0)}\subseteq R^{(1)}$ be an inclusion of $\!k\!$-algebras, and
$p$ an element of $R^{(0)}.$
Then the following conditions are equivalent.\\[.5em]
\textup{(a)}  The induced map $R^{(0)}\lang q\mid pqp=p\rang
\ \to\ R^{(1)}\lang q\mid pqp=p\rang$ is an embedding.\\[.5em]
\textup{(b)}
$R^{(0)}\cap(R^{(1)} p) = R^{(0)} p,$ and
$R^{(0)}\cap(pR^{(1)}) = pR^{(0)},$ and
$R^{(0)}\cap (p R^{(1)}{+}R^{(1)} p) = p R^{(0)}{+}R^{(0)} p.$
\end{proposition}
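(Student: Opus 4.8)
The plan is to prove the equivalence by exhibiting, inside $R'=R\lang q\mid pqp=p\rang$, intrinsic descriptions of the three subspaces $pR$, $Rp$ and $pR+Rp$ of $R$ -- descriptions preserved automatically under any embedding of such algebras -- and, for the converse, by extending a stratified basis of $R^{(0)}$ compatibly to one of $R^{(1)}$, so that the normal form of Theorem~\ref{T.1_notin} (or of its analogues in the remaining cases) for $R^{(0)}\lang q\mid pqp=p\rang$ becomes literally a subcollection of the one for $R^{(1)}\lang q\mid pqp=p\rang$. Write $R^{(i)}{}'$ for $R^{(i)}\lang q\mid pqp=p\rang$, and let $\phi\colon R^{(0)}{}'\to R^{(1)}{}'$ be the induced map, which fixes $p$ and $q$.

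For (a)$\Rightarrow$(b), I would first record that in any right $\!R'\!$-module $N$ one has $\{n:n(qp-1)=0\}=Np$, since $np(qp-1)=n(pqp-p)=0$ and conversely $n(qp-1)=0$ gives $n=nqp\in Np$. Taking $N=R^{(i)}\otimes_{R^{(i)}}R^{(i)}{}'=R^{(i)}{}'$ and intersecting with $R^{(i)}$, Corollary~\ref{C.tempered} (with $M_+=R^{(i)}p$) yields the intrinsic characterization $\{x\in R^{(i)}:x(qp-1)=0 \text{ in } R^{(i)}{}'\}=R^{(i)}p$, and dually $\{x:(pq-1)x=0\}=pR^{(i)}$. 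Since $\phi$ is injective by (a) and fixes $p,q$, for $x\in R^{(0)}$ the relation $x(qp-1)=0$ holds in $R^{(0)}{}'$ iff it holds in $R^{(1)}{}'$; this gives $R^{(0)}p=R^{(0)}\cap R^{(1)}p$ and, dually, $pR^{(0)}=R^{(0)}\cap pR^{(1)}$, the first two clauses of (b).

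For the third clause I would establish, for arbitrary $R,p$, the two-sided characterization $\{r\in R:(pq-1)r(qp-1)=0 \text{ in } R'\}=pR+Rp$. The inclusion $\supseteq$ is~\eqref{d.1=:pq-1*qp-1}. For $\subseteq$, write $r=r'+r''$ with $r'\in pR+Rp$ and $r''$ a $\!k\!$-combination of the basis elements $x\in B_{--}$ (together with the unit in the case $1\notin pR+Rp$); since $(pq-1)r'(qp-1)=0$, it suffices to show $(pq-1)r''(qp-1)\neq 0$ whenever $r''\neq 0$. Each such $x$ is neither left- nor right-divisible by $p$, so none of the words $pqx$, $xqp$, $pqxqp$ admits any reduction of our systems; hence expanding $(pq-1)x(qp-1)=pqxqp-pqx-xqp+x$ already displays it in normal form, with degree-$\!\leq\!1$ part exactly $x$. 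The longer reduced words cannot cancel these distinct degree-one basis words, so the degree-one part of $(pq-1)r''(qp-1)$ equals $r''$, forcing $r''=0$ (when $p$ is one-sidedly invertible, $pR+Rp=R$ and the claim is vacuous). Applying $\supseteq$ to $R^{(1)}$ and, via injectivity of $\phi$, applying $\subseteq$ to $R^{(0)}$, one obtains $R^{(0)}\cap(pR^{(1)}+R^{(1)}p)=pR^{(0)}+R^{(0)}p$. I expect this two-sided characterization to be the main obstacle: unlike its one-sided analogues, it is not furnished by Corollary~\ref{C.tempered} and must be read off the normal form, though the irreducibility of the words above keeps the computation uniform across the cases.

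For (b)$\Rightarrow$(a): The three clauses, together with their consequence $R^{(0)}\cap(pR^{(1)}\cap R^{(1)}p)=(R^{(0)}\cap pR^{(1)})\cap(R^{(0)}\cap R^{(1)}p)=pR^{(0)}\cap R^{(0)}p$, first show (testing on $1$) that $1$ occupies the same position in the chain $pR\cap Rp\subseteq pR\cup Rp\subseteq pR+Rp\subseteq R$ for both algebras, so the same normal-form theorem governs each. I would then choose a stratified basis $B^{(0)}$ of $R^{(0)}$ as in~\eqref{d.B_} (or its analogue) and extend it stratum by stratum to such a basis $B^{(1)}$ of $R^{(1)}$ with $B^{(0)}_{\ast}\subseteq B^{(1)}_{\ast}$ for each label $\ast$: the intersection identities in (b) are precisely what guarantee that each $B^{(0)}_{\ast}$ stays linearly independent modulo the corresponding larger $\!R^{(1)}\!$-subspace, so by Lemma~\ref{L.V_1+V_2} the extensions exist (and $p$, and in the intermediate case the common $ps$ and $tp$, remain in the prescribed strata). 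Because $B^{(0)}\subseteq B^{(1)}$ and the $\!B^{(1)}\!$-expansion of any element of $R^{(0)}$ is supported on $B^{(0)}$, we have $v_{R^{(0)}}=v_{R^{(1)}}$ for $v\in R^{(0)}$; hence the reduction system for $R^{(0)}{}'$ is exactly the restriction to words in $B^{(0)}\cup\{q\}$ of that for $R^{(1)}{}'$, and the reduced words of $R^{(0)}{}'$ are precisely those reduced words of $R^{(1)}{}'$ using only letters from $B^{(0)}\cup\{q\}$. The normal-form basis of $R^{(0)}{}'$ therefore maps bijectively onto a subset of that of $R^{(1)}{}'$, so $\phi$ is an embedding.
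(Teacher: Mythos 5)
Your proposal is correct, and its overall architecture is the same as the paper's: for (a)$\implies$(b), transport intrinsic annihilator descriptions of $R p,$ $pR$ and $pR+Rp$ along the injection; for (b)$\implies$(a), use the identities in (b) to extend a stratified basis of $R^{(0)}$ to one of $R^{(1)}$ so that the normal form of $R^{(0)}\lang q\mid pqp=p\rang$ sits inside that of $R^{(1)}\lang q\mid pqp=p\rang.$ The one point where you genuinely diverge is the third clause of (b). The paper dispatches it with the identity ``$(1-pq)\,x\,(1-qp)=0$ can be rewritten $x=pqx+xqp-pqxqp$,'' which as stated only places $x$ in $pR'+R'p$ and so tacitly relies on $R\cap(pR'+R'p)=pR+Rp$ (just as its treatment of the first two clauses tacitly relies on $R\cap R'p=Rp,$ i.e.\ on Corollary~\ref{C.tempered}, despite the remark that this direction avoids the normal-form results). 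You instead prove $\{r\in R\mid (pq-1)\,r\,(qp-1)=0\}=pR+Rp$ head-on, by checking that for $x$ in $B_{--}$ (or $x=1$ when $1\notin pR+Rp$) the words $pqx,$ $xqp,$ $pqxqp$ are irreducible in each of the reduction systems, so that the length-$\leq 1$ part of the normal form of $(pq-1)\,r''\,(qp-1)$ is $r''$ itself. That computation is sound (and the one-sidedly invertible case is indeed vacuous since there $pR+Rp=R$); it costs you a small case check across the three normal-form regimes but makes the logical dependence on Theorems~\ref{T.1_notin} and~\ref{T.1=} explicit where the paper leaves it implicit. Your (b)$\implies$(a), including the observation that the three intersection identities are exactly what Lemma~\ref{L.V_1+V_2} needs to extend each stratum and that $s,t$ can be chosen in $R^{(0)},$ matches the paper's argument.
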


\begin{proof}
The direction (a)$\implies$(b) will not use our normal
form results, and, indeed, holds without the assumption
that $k$ is a field.
Observe that in any ring, if an element $p$ has an
inner inverse $q,$ then an element $x$ is right divisible
by $p$ if and only if $x(1-qp)=0:$
``only if'' is clear, while ``if'' holds because the
indicated equation makes $x=xqp.$
(We used the same idea in the proof of Corollary~\ref{C.M->MOX}.)
Under the assumption~(a), this immediately gives the first equality
of~(b); the second is seen dually.
The third holds by the similar criterion saying that
$x\in pR+Rp$ if and only if $(1-pq)\,x\,(1-qp)=0,$ where ``if''
holds because that equation can be written $x=pqx+xqp-pqxqp.$

The proof that (b)$\implies$(a) will use our normal form results.
Under the assumptions of~(b), note that whichever of the cases
``$\!1\notin pR+Rp$'', ``$\!1\in pR+Rp-(pR\cup Rp)$'',
``$\!1\in pR$'', ``$\!1\in Rp$''
apply to $R^{(1)}$ will also apply to $R^{(0)}.$

Let us now take a generating set $B^{(0)}=
B_{++}^{(0)}\cup B_{+-}^{(0)} \cup B_{-+}^{(0)}\cup B_{--}^{(0)}$
for $R^{(0)}$ as in our development of the case under which
$R^{(0)}$ and $R^{(1)}$ both fall.
(Some of these sets will be empty if we are in a case where $p$ is right
and/or left invertible.)
It follows from~(b) that we can extend each of
$B_{++}^{(0)},$ $B_{-+}^{(0)},$ $B_{+-}^{(0)},$ $B_{--}^{(0)}$
to a subset
$B_{++}^{(1)},$ $B_{-+}^{(1)},$ $B_{+-}^{(1)},$ $B_{--}^{(1)}$
of $R^{(1)}$ satisfying the corresponding conditions,
so as to yield a generating
set $B^{(1)}$ for $R^{(1)}$ with each component
containing the corresponding component of the generating
set $B^{(0)}$ for $R^{(0)}.$
Using these generating sets, the normal form expression
for each element of $R^{(0)}\lang q\mid pqp=p\rang$
is also the normal form of its image in
$R^{(1)}\lang q\mid pqp=p\rang.$
Hence, if an element is nonzero in the former
ring, so is its image in the latter, establishing~(a).
\end{proof}

(Incidentally, the first two equalities of~(b) above do not
imply the third.
For a counterexample, let $R^{(1)}$ be the Weyl algebra,
written as in~\eqref{d.Weyl} below, and $R^{(0)}$ its
subalgebra $k[p].$
Then $1\notin pR^{(0)}+R^{(0)}p$ but $1\in pR^{(1)}+ R^{(1)}p,$ so
the third condition of~(b) fails, though the first two clearly hold.
More on the algebra gotten by adjoining an inner inverse to $p$ in
the Weyl algebra in the next section.)

Is there a generalization of the above proposition based on a concept of
a ``$\!p\!$-tempered $\!k\!$-algebra'' $R,$
in which certain $\!k\!$-subspaces of $R$
are specified whose elements are to be treated like
right and/or left multiples of $p$?
A difficulty is that although, when we are dealing with genuine
left and right multiples of $p,$ reductions
$(xp)\,q\,(py)\mapsto (xpy)_R$ turn out to be well-defined, there is
no evident reduction of $xqy$ when $x$ and $y$ are
elements ``to be regarded as'' a right and a left multiple of $p$
respectively.
But I have not looked closely at the question.
\vspace{.3em}

Let's clear up a couple of loose ends.
I mentioned in the preceding section that the formulation of
Lemma~\ref{L.identity} used there was rigged for quick application.
Here is the promised more abstract version.
Note that the $n$ of the result below corresponds
to $n{-}1$ in Lemma~\ref{L.identity}, since there are $n{-}1$
places in which to insert factors between $n$ $q$'s.
The proof is essentially as before.

\begin{corollary}[to proof of Lemma~\ref{L.identity}]\label{C.multilin}
Let $n\geq 1,$ let $A$ and $A'$ be abelian groups,
let $\mu: A^n\to A'$ be an $\!n\!$-linear map,
and let $x$ and $y$ be elements of $A.$
Let $S(n)$ be the family of elements $\pm\,\mu(a_1,\dots,a_n)\in A'$
which arise from all ways of choosing each $a_i$ from the
$\!3\!$-element set $\{x,y,x{+}y\},$ and also choosing the
sign plus or minus, so as to satisfy the following conditions.
\begin{equation}\begin{minipage}[c]{35pc}\label{d.insert'}
If $(a_1,\dots,a_n)$ has an $x$ in a nonfinal position, it
has a $y$ in the next position.

If $(a_1,\dots,a_n)$ has a $y$ in a noninitial position, it
has an $x$ in the preceding position.

If the number of occurrences in $(a_1,\dots,a_n)$
of the substring ``$x,y\!$'', plus the
number of occurrences of initial $y$ and/or final $x,$
is odd, then the sign appended
to $\mu(a_1,\dots,a_n)$ is $-;$ otherwise it is $+.$
\end{minipage}\end{equation}

Then the sum of the resulting set $S(n)$ of elements
$\pm\,\mu(a_1,\dots,a_n)\in A'$ is $0.$

\textup{(}Above, if two of $x,\,y,\,x{+}y\in A$
happen to be equal, we treat them as formally distinct
in interpreting~\eqref{d.insert'}.\textup{)}\qed
\end{corollary}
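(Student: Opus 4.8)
The plan is to recognize Corollary~\ref{C.multilin} as the multilinear abstraction of Lemma~\ref{L.identity} and to replay that lemma's proof. The governing dictionary is $x\leftrightarrow(tp)$, $y\leftrightarrow(ps)$, $x{+}y\leftrightarrow(ps{+}tp)$, with the juxtaposition product in the free algebra $F$ (whose inserted factors are interleaved with a fixed scaffold of $\!q\!$'s) replaced by the $\!n\!$-linear map $\mu$. Under this dictionary the two requirements of~\eqref{d.insert'} become exactly the adjacency conditions imposed in~\eqref{d.insert}, and the sign prescription of~\eqref{d.insert'} -- counting occurrences of the substring ``$x,y$'' together with an initial $y$ or a final $x$ -- becomes the count $d$ of marked $\!q\!$'s there. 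So the combinatorial content is literally the same; what remains is to arrange matters so that the abstract hypotheses, and in particular the convention that $x$, $y$, $x{+}y$ are to be treated as formally distinct, are respected.

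First I would reduce to a single universal instance, which is what makes the formal-distinctness convention rigorous. Let $A_0=\mathbb{Z}x_0\oplus\mathbb{Z}y_0$ be free on two symbols, let $A'_0$ be the free abelian group on the set $\{x_0,y_0\}^n$ of ``pure'' tuples, and let $\mu_0\colon A_0^{\,n}\to A'_0$ be the tautological $\!n\!$-linear map carrying a tuple of generators to the corresponding basis element. For arbitrary $(A,A',\mu,x,y)$, the group homomorphism $\psi\colon A_0\to A$ with $x_0\mapsto x$, $y_0\mapsto y$ induces, by the universal property of $\mu_0$, a homomorphism $\Phi\colon A'_0\to A'$ with $\Phi(\mu_0(a_1,\dots,a_n))=\mu(\psi(a_1),\dots,\psi(a_n))$. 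Since the signs in~\eqref{d.insert'} depend only on the pattern of $x$'s and $y$'s, which $\psi$ preserves, $\Phi$ carries the universal family term-by-term onto the family $S(n)$ of the corollary; hence it suffices to prove $\sum_{v\in S(n)}v=0$ in the universal case.

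In the universal case I would expand each element $\pm\,\mu_0(a_1,\dots,a_n)$ of $S(n)$, with $a_i\in\{x_0,y_0,x_0{+}y_0\}$, by multilinearity into a $\mathbb{Z}\!$-combination of pure tuples $w\in\{x_0,y_0\}^n$, and show that each $w$ occurs in $\sum_{v\in S(n)}v$ with total coefficient $0$. For this I transcribe the marking argument of Lemma~\ref{L.identity}: call a coordinate of $w$ marked if it is an initial $y_0$, a final $x_0$, or the left end of a substring ``$x_0,y_0$''. Every $w$ has at least one mark (follow the first $y_0$, whose predecessor is then forced to be $x_0$, or, failing any $y_0$, use the final $x_0$), and distinct marks never share a coordinate; let $e\geq1$ be their number. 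The members $v\in S(n)$ whose expansion contains $w$ are indexed by subsets $T$ of the set of marks: at a mark in $T$ one merges the adjacent pure factor(s) into $x_0{+}y_0$, at a mark outside $T$ one keeps them pure, and every factor not adjacent to a mark is forced to $x_0{+}y_0$. Such a $v$ then carries $e-|T|$ marks and hence sign $(-1)^{e-|T|}$, so the coefficient of $w$ is $\sum_{T}(-1)^{e-|T|}=(-1)^e\sum_{k=0}^{e}\binom{e}{k}(-1)^k=(-1)^e(1-1)^e=0$.

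The only genuine obstacle is the bookkeeping in this last step: verifying that the $2^e$ merge-patterns yield exactly the members of $S(n)$ containing $w$ (neither too many nor too few), that each resulting tuple still satisfies~\eqref{d.insert'}, and that merging a pure pair at a mark removes precisely that one mark, so that the sign assigned by~\eqref{d.insert'} is indeed $(-1)^{e-|T|}$ and no spurious new marks appear. Because no two marks share a coordinate, these local modifications cannot interfere, and each check reduces verbatim to the corresponding check already carried out in the proof of Lemma~\ref{L.identity}; hence no new computation is required.
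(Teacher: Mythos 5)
Your proposal is correct and follows essentially the same route as the paper, which proves the corollary simply by noting that the argument of Lemma~\ref{L.identity} carries over verbatim under the dictionary $x\leftrightarrow(tp),$ $y\leftrightarrow(ps),$ $x{+}y\leftrightarrow(ps{+}tp)$ (with the corollary's $n$ equal to the lemma's $n-1$). Your preliminary reduction to the universal case $A=\mathbb{Z}x_0\oplus\mathbb{Z}y_0,$ $A'=A^{\otimes n}$ is exactly the "cleaner form" the paper itself points out in the remark following the corollary, and your transcription of the marking/sign count matches the lemma's proof.
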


Is the above the nicest version of the result?
A ``cleaner'' form would be the special
case where $A$ is the free abelian group on $\{x,y\},$ and
$A'$ the $\!n\!$-fold tensor power of $A,$ since
the form given above can be obtained from that case by composing
with maps into general $A$ and $A';$ and that case would
avoid distractions when studying the combinatorics of the result.
On the other hand, the form given above simplifies
applications such as we are making here.
\vspace{.5em}

Turning back to the proof of resolvability of the
ambiguities~\eqref{d.mspn} and~\eqref{d.mspn+}, it might
be possible to make this cleaner by first obtaining
identities involving the families
$S(n)\subseteq k\lang p,s,t,q\rang.$
If we let $S'(n)$ denote the subset of $S(n)$ consisting of those
elements, in the construction of which the final $q$ was {\em not}
marked (and, for convenience, define $S'(1)=\{q\}),$
then we can express the $S(n)$ in terms of these sets:
\begin{equation}\begin{minipage}[c]{35pc}\label{d.S=S'-}
$S(n)\ =\ S'(n)\ \cup\ -S'(n{-}1)\,(tp)\,q \quad (n\geq 2),$
\end{minipage}\end{equation}
and give a recursive construction of the $S'(n):$
\begin{equation}\begin{minipage}[c]{35pc}\label{d.S'12}
$S'(1)=\{q\},\qquad S'(2)=\{q\,(ps{+}tp)\,q,\ q\,(ps)\,q\},$
\end{minipage}\end{equation}
\begin{equation}\begin{minipage}[c]{35pc}\label{d.S'_rec}
$S'(n)\ =\ S'(n{-}1)\,(ps{+}tp)\,q\ \cup\ -S'(n{-}2)\,(tp)\,q\,(ps)\,q
\quad (n\geq 3).$
\end{minipage}\end{equation}

We would likewise let $S''(n)\subseteq S(n)$ be the subset
determined by the condition the that {\em initial} $q$ not be marked,
and give the corresponding formulas for these sets; and
we could probably develop formulas which,
mapped to our ring $R',$ would be equivalent to the resolvability
of our ambiguities.
If the method of the preceding section should prove useful beyond the
particular results we obtain there, this approach might be
worth pursuing.

\section{What if $R$ is the Weyl algebra?  Don't ask!}\label{S.Weyl}

A well-known example of a ring with an element $p$ that is
neither left nor right invertible, but which satisfies $1\in pR+Rp,$
is the Weyl algebra.
This is usually denoted $A=k\lang x,y\mid yx=xy+1\rang$
or $A=k\lang x,\,d/dx\rang;$ but for consistency with the
notation in the rest of this note, let us write it
\begin{equation}\begin{minipage}[c]{35pc}\label{d.Weyl}
$R\ =\ k\lang\,p,s\mid ps + (-s)p = 1\rang\,.$
\end{minipage}\end{equation}

It is natural to ask whether we can get a
nice normal form for the extension
\begin{equation}\begin{minipage}[c]{35pc}\label{d.W:R'=}
$R'\ =\ k\lang\,p,s,q\mid ps + (-s)p = 1,\ pqp=p\rang\,.$
\end{minipage}\end{equation}

If we want to apply the construction of Theorem~\ref{T.1=},
we first need to determine the $\!k\!$-subspaces
$pR$ and $Rp$ of $R,$ and their intersection.
It is a standard result that a $\!k\!$-basis of the Weyl
algebra is given by
\begin{equation}\begin{minipage}[c]{35pc}\label{d.W:s^mp^n}
$\{s^m p^n\mid m,n\geq 0\}.$
\end{minipage}\end{equation}
Indeed, every element of $R$ can be reduced to
a linear combination of members of this basis by
repeated application of the reduction
\begin{equation}\begin{minipage}[c]{35pc}\label{d.W:ps|->}
$ps\ \mapsto\ sp+1,$
\end{minipage}\end{equation}
which has no ambiguities.

Since right multiplying a $\!k\!$-linear combination of
elements of~\eqref{d.W:s^mp^n} by $p$ gives a $\!k\!$-linear
combination of such elements having $n>0,$
it follows that $Rp$ is precisely the $\!k\!$-subspace of $R$
spanned by the elements $s^m\,p^n$ with $n>0.$

One can characterize $pR$ similarly using the basis
$\{p^n s^m\mid m,n\geq 0\},$ but this does not help if we want to
study both subspaces at the same time.
So let us,
for now, represent elements of $R$ using the basis~\eqref{d.W:s^mp^n},
and investigate what linear combinations of these
basis elements lie in $pR.$

If we apply~\eqref{d.W:ps|->} repeatedly starting with $p\,s^m,$ we get
\begin{equation}\begin{minipage}[c]{35pc}\label{d.W:ps^m}
$p\,s^m\ =\ s^m\,p\ +\ m\,s^{m-1}.$
\end{minipage}\end{equation}
Thus, $s^m p\equiv -\,m s^{m-1}\pmod{pR},$ and right multiplying
this congruence by $p^{n-1},$ we get
\begin{equation}\begin{minipage}[c]{35pc}\label{d.W:s^mp^n==}
$s^m p^n\ \equiv\ -m\,s^{m-1}p^{n-1}\hspace*{-.6em}\pmod{pR}$\quad
for $m,n\geq 1.$
\end{minipage}\end{equation}

We can iterate~\eqref{d.W:s^mp^n==}, decreasing the exponents
of $s$ and $p$ until one of them goes to zero.
So if $n>m,$ we conclude that $s^m p^n$ is congruent
modulo $pR$ to an integer multiple of a positive power of $p;$
hence it lies in $pR;$ and since it also lies in $Rp,$ we get
\begin{equation}\begin{minipage}[c]{35pc}\label{d.W:n>m}
$s^m p^n\in pR\cap Rp$\quad if $n>m.$
\end{minipage}\end{equation}

On the other hand, if $m\geq n > 1,$ it is convenient to
iterate~\eqref{d.W:s^mp^n==} only to the point of bringing
the exponent of $p$ down to $1.$
That gives us
$s^m p^n\equiv (-1)^{n-1}\,m(m-1)\dots(m-n+2)\,s^{m-n+1}\,p\pmod{pR},$
so again, since both expressions lie in $Rp,$ we get
\begin{equation}\begin{minipage}[c]{35pc}\label{d.W:m_geq_n}
$s^m p^n\ \equiv\ (-1)^{n-1}\,m(m{-}1)\dots(m{-}n{+}2)\,s^{m-n+1}\,p
\pmod{pR\cap Rp}$\quad if $m\geq n > 1.$
\end{minipage}\end{equation}

Combining~\eqref{d.W:n>m},~\eqref{d.W:m_geq_n}, and the
vacuous relation $s^m\ \equiv\ s^m \pmod{pR\cap Rp},$ we get
\begin{equation}\begin{minipage}[c]{35pc}\label{d.W:pR+Rp_big}
Every element of $R$ is congruent modulo $pR\cap Rp$ to
a linear combination of words $s^m$ and~$s^m p.$
\end{minipage}\end{equation}

Since the family of words $\{s^m,\,s^m p\mid m\geq 0\}$ is
``small'' compared with the full $\!k\!$-basis~\eqref{d.W:s^mp^n},
we see that when we form our desired spanning set $B,$ ``most of''
that set can be expected to lie in the component $B_{++}\,.$

Further details depend on the characteristic of $k.$
We shall consider the case where $\r{char}(k)=0.$

In this case, solving~\eqref{d.W:ps^m} for $s^{m-1},$ we see
that every power of $s$ lies in $pR+Rp.$
Hence,
\begin{equation}\begin{minipage}[c]{35pc}\label{d.W:R=pR+Rp}
If $\r{char}(k)=0,$ then $R=pR+Rp.$
\end{minipage}\end{equation}
It is now easy to verify that
\begin{equation}\begin{minipage}[c]{35pc}\label{d.W:B=}
\hspace*{0em}If $\r{char}(k)=0,$ then a spanning set $B$ for $R$ with
the properties of~\eqref{d.1=:B=}, \eqref{d.1=:B_} is given
by\vspace{.2em}

$B_{++}\ =\ \{s^m p^n\mid n>m\geq 0\}\ \cup
\ \{s^m p^n + ms^{m-1}\,p^{n-1}\mid m\geq n>1\}$\quad
(cf.~\eqref{d.W:n>m} and~\eqref{d.W:s^mp^n==}),\vspace{.2em}

$B_{-+}\ =\ \{-s^m\,p\mid m>0\},$\vspace{.2em}

$B_{+-}\ =\ \{p\,s^m\mid m>0\}
\ =\ \{s^m\,p + m\,s^{m-1}\mid m>0\}$\quad
(cf.~\eqref{d.W:ps^m}),\vspace{.2em}

$B_{--}\ =\ \emptyset.$
\end{minipage}\end{equation}
(I have put a minus sign into the entries of $B_{-+}$ to conform
with the convention made
in~\eqref{d.1=:B=}, that $B_{-+}$ contain $tp,$
which, in writing~\eqref{d.Weyl}, we have taken to be $(-s)p.)$

Using the above basis, we can obtain by Theorem~\ref{T.1=} a normal
form for $R'=k\lang p,s,q\mid ps+(-s)p= 1,\ \linebreak[1] pqp=p\rang.$

But what we would really like is a normal
form in terms of the generators $p,$ $s$ and $q.$
When first exploring
the case $1\in pR+Rp -(pR\cup Rp)$ of the subject of this note,
I took the Weyl algebra as a sample case, and
tried to find such a normal form;
but the ambiguities among reductions I obtained kept spawning
new reductions, without apparent pattern.
This, along with calculations showing
that the forms of these reductions must
depend on the characteristic of $k,$
led me to doubt for a long time that any reasonable
normal form could be found when $1\in pR+Rp -(pR\cup Rp).$
It was only when I dropped the case of the Weyl algebra,
and returned to consideration of a general $\!k\!$-algebra,
that I was able to get anywhere.

However, with the results of \S\ref{S.1=:norm} now at
hand, we can develop a normal form for this algebra $R'$
in terms of $p,$ $s$ and $q,$ and shall do so below
(still assuming $\r{char}(k)=0).$

(Let me here moderate the semi-facetious title of this section,
to merely say that if, at some point the reader chooses not to
slog further through the lengthy argument for the sake of a normal
form whose value is not evident, I will not argue with his
or her choice.)

In preparation for the result, let us note that the normal form
that we would get by simply applying Theorem~\ref{T.1=} to
the basis~\eqref{d.W:B=} for $R$ is somewhat
atypical among applications of that theorem.
In the general situation of Theorem~\ref{T.1=},
if we take from our spanning set $B$ three elements
$xp\in B_{++}\cup B_{-+},$ $y\in B_{--},$ $pz\in B_{++}\cup B_{+-}$
(note the choice of $y$ here, the opposite of what we considered
when looking for ambiguities!), then products $(xp)\,q^m\,y\,q^n\,(pz)$
are irreducible: the presence of $y\in B_{--}$ between
$(xp)$ and $(pz)$ blocks any reductions.
However, with a basis like~\eqref{d.W:B=}, where $B_{--}$ is empty,
no such blockage is possible; and we find that in any string
of elements of $B\cup\{q\}$ that is reduced with
respect to the normal form of Theorem~\ref{T.1=}, no element
of $B_{++}\cup B_{-+}$ can occur anywhere to the left of an element
of $B_{++}\cup B_{+-}.$
So the elements of $B$ (if any) occurring interspersed among
the $\!q\!$'s in our word
will begin with a sequence (possibly empty) of
members of $B_{+-},$ and end with a sequence (possibly empty) of
members of $B_{-+},$ with at most a single member of $B_{++}$
between these.

This will prepare us for the fact that words in the normal form
based on $p,$ $s$ and $q$ that we shall obtain will
typically have a sort of singularity in the middle.
To prepare us in a more detailed way for the form they will have,
let us note that where in \S\ref{S.1=:norm}, a key tool was
to apply, between various pairs of $\!q\!$'s in a string $q\dots q,$
the relation $1=ps+tp,$ in the present situation we can, more
generally, whenever
an $s^m$ $(m\geq 0)$ appears between $\!q\!$'s, apply the result of
putting $m{+}1$ for $m$ in~\eqref{d.W:ps^m} and solving for~$s^m:$
\begin{equation}\begin{minipage}[c]{35pc}\label{d.W:s^m=}
$s^m\ =\ (m+1)^{-1}(ps^{m+1} - s^{m+1}p).$
\end{minipage}\end{equation}

Using these ideas, we shall now prove

\begin{theorem}\label{T.W}
Let $k$ be a field of characteristic~$0.$
Then the algebra
\begin{equation}\begin{minipage}[c]{35pc}\label{d.W:R'=.}
$R'\ =\ k\lang p,s,q\mid ps=sp+1,\ pqp=p\rang$
\end{minipage}\end{equation}
has a $\!k\!$-basis consisting of
all words in $p,$ $s$ and $q$ in which no $p$
is immediately followed by an $s,$ and the $\!p\!$'s that
occur \textup{(}if any\textup{)} form a single consecutive string.
In other words, every such word has the form
\begin{equation}\begin{minipage}[c]{35pc}\label{d.W:n_form}
$s^{a_0}\,q\,s^{a_1}\,q\,\dots\,q\,s^{a_{m-1}}\,q\,s^{a_m}\,p^b\,q\,
s^{a_{m+1}}\,q\,\dots\,q\,s^{a_n},$
\end{minipage}\end{equation}
where $0\leq m\leq n,$ $b\geq 0,$ and all $a_i\geq 0.$
\textup{(}Remark: If $b=0,$ then $m$ is,
of course, not uniquely defined.\textup{)}
\end{theorem}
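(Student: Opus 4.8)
The plan is to deduce this normal form from Theorem~\ref{T.1=} rather than to attack $R'$ directly through the Diamond Lemma in the generators $p,s,q$ — the latter being exactly the route that, as remarked above, spawned reductions without apparent pattern. Theorem~\ref{T.1=}, applied to the spanning set \eqref{d.W:B=}, already furnishes an explicit $k$-basis for $R'$, namely the reduced words in the letters $B\cup\{q\}$; call this basis $\mathcal{B}_1$. As noted before the statement, because $B_{--}=\emptyset$ here every such reduced word has the ``one-singularity'' shape: a string of members of $B_{+-}$ and $\!q\!$'s, then at most one member of $B_{++}$, then a string of members of $B_{-+}$ and $\!q\!$'s. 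My goal will be to show that the proposed set of words \eqref{d.W:n_form}, which I will call $\mathcal{B}_2$, both spans $R'$ and is related to $\mathcal{B}_1$ by a triangular change of basis, so that $\mathcal{B}_2$ is itself a basis.

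First I would prove that $\mathcal{B}_2$ spans. Applying the Weyl reduction \eqref{d.W:ps|->} exhaustively rewrites any word in $p,s,q$ as a combination of words containing no factor $ps$; in such a word the $\!p\!$'s occur in maximal blocks, each followed by a $q$ or by the end of the word. To coalesce two blocks I would use the relation $pqp=p$, the relation \eqref{d.1=:pqqp}, and their higher analogues, after first using \eqref{d.W:s^m=} to surround each interior power $s^c$ lying between two $\!q\!$'s by $\!p\!$'s, thereby creating the $pqp$-patterns that the inner-inverse relations can collapse. Termination I would control by the number of $\!q\!$'s, which strictly drops at each merge (the relation $pqp=p$ and its analogues each remove $\!q\!$'s, while \eqref{d.W:s^m=} preserves their number), together with a secondary measure ensuring that the Weyl reduction \eqref{d.W:ps|->} itself terminates at a fixed $\!q\!$-count.

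For linear independence I would run the correspondence in the other direction. Expanding each interior power $s^{a_i}$ of a word \eqref{d.W:n_form} by \eqref{d.W:s^m=}, leaving the central factor $s^{a_m}p^b$ in place, and then applying the Theorem~\ref{T.1=} reductions, rewrites the word as a $k$-combination of members of $\mathcal{B}_1$. I would single out a leading $\mathcal{B}_1$-term as the one that takes, from each $s^{a_i}$ with $i<m$, its $B_{+-}$ summand $(a_i+1)^{-1}\,ps^{a_i+1}$, and from each $s^{a_i}$ with $i>m$, its $B_{-+}$ summand; this term respects the one-singularity shape and has a nonzero coefficient precisely because $\mathrm{char}\,k=0$ makes each $(a_i+1)^{-1}$ available. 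Checking that distinct words \eqref{d.W:n_form} yield distinct leading terms, and that every other term is strictly lower in the \S\ref{S.1=:norm} ordering, would exhibit a triangular transition from $\mathcal{B}_2$ into the basis $\mathcal{B}_1$ and hence the independence of $\mathcal{B}_2$.

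The main obstacle will be the bookkeeping that matches the two forms: verifying that the block-merging in the spanning step terminates at exactly one $\!p\!$-block, and that the leading-term assignment in the independence step is genuinely injective and triangular. Both rest on the same cancellations that governed \S\ref{S.1=:norm}: the signed sums of the sets $S(n)$ from Lemma~\ref{L.identity}, now available in the clean multilinear form of Corollary~\ref{C.multilin}, are exactly what force the ``wrong'' cross-terms to cancel when \eqref{d.W:s^m=} is inserted repeatedly between $\!q\!$'s, so that the only surviving obstruction to a single $\!p\!$-block is the genuine one recorded in \eqref{d.W:n_form}.
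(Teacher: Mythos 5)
Your proposal follows essentially the same two-part strategy as the paper's own proof: spanning is obtained by inserting~\eqref{d.W:s^m=} between consecutive $\!q\!$'s and invoking Corollary~\ref{C.multilin} so that all terms other than the original word acquire a $pqp$ and hence fewer $\!q\!$'s, with termination governed by a well-founded measure dominated by the $\!q\!$-count; and independence is obtained by expanding the words~\eqref{d.W:n_form} in the Theorem~\ref{T.1=} basis built on~\eqref{d.W:B=} and matching distinct leading terms. The bookkeeping you flag (the precise termination measure, the preliminary rewriting of the final $s^{m_n}p$ as $p\,s^{m_n}$ minus a lower term, and the separate treatment of $b=0,$ $b=1,$ $b>1$ in the leading-term argument) is carried out in the paper exactly along the lines you anticipate.
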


\begin{proof}
We shall first show that every monomial in our generators can be
reduced to a linear combination of
monomials~\eqref{d.W:n_form}, so that these
span $R',$ then that the set of such
monomials is $\!k\!$-linearly independent.
We will not follow the formalism of the Diamond Lemma, though
some of the ideas will be similar.
In particular, in the first part of our proof,
we shall associate to every monomial a $\!4\!$-tuple of
natural numbers, and show that every monomial not of the
form~\eqref{d.W:n_form} is equal in $R'$ to a $\!k\!$-linear
combination of monomials each of which has smaller
associated $\!4\!$-tuple, under lexicographic ordering.
This is enough to show that every monomial is a $\!k\!$-linear
combination of monomials~\eqref{d.W:n_form}.
(If not every monomial were so expressible,
there would be a least $\!4\!$-tuple associated
with a counterexample monomial $w,$ and applying a reduction of
the indicated sort to $w$ would give a contradiction.)

The $\!4\!$-tuple we shall associate with a word $w$ is
\begin{equation}\begin{minipage}[c]{35pc}\label{d.aaab}
$h(w)\ =\ (a_q(w),\,a_p(w),\,a_s(w),\,b_{p,s}(w)),$
\end{minipage}\end{equation}
where the first three coordinates are the numbers of
$\!q\!$'s, $\!p\!$'s and $\!s\!$'s in $w,$ and the last is
the number of occurrences of a $p$ anywhere before an $s,$
i.e., the number of ordered pairs $(i,j)$ with $i<j$ such
that the $\!i\!$-th factor of $w$ is a $p$ and the $\!j\!$-th is an $s.$
This refinement of the coordinate ``number of occurrences
of the element $(ps)$'' that we used in \S\ref{S.1=:norm} is needed
here: if we simply counted occurrences of the
string $ps,$ calling this number $a_{ps}(w),$ then inequalities
involving this function would not respect formal multiplication
of monomials:  clearly, $a_{ps}(sp)<a_{ps}(ps),$ yet multiplying
these monomials
on the left by $p$ we find that $a_{ps}(psp)\not<a_{ps}(pps).$
However, I claim that for $h$ defined by~\eqref{d.aaab}, if
$h(u)\leq h(u')$ and $h(v)\leq h(v'),$ with
at least one of these inequalities strict, then $h(uv)<h(u'v').$
Indeed, this is obvious except in the case where the first three
coordinates of $h(u)$ agree with those of $h(u')$
and the first three coordinates of $h(v)$ agree with those of $h(v'),$
so that the comparison depends on the $\!4\!$-th coordinate, $b_{p,s}.$
Now it is easy to see that in general,
$b_{p,s}(uv) = b_{p,s}(u) + b_{p,s}(v) + a_p(u)\,a_s(v),$ so when
the $\!a\!$-coordinates
are the same for $u$ and $u',$ and likewise for $v$ and $v',$
the $\!b_{p,s}\!$-coordinate of our product depends additively
on the $\!b_{p,s}\!$-coordinates of the factors, from which
the desired inequality follows.

So let us assume $w$ is a monomial not of the form~\eqref{d.W:n_form},
and prove that it is a linear combination of monomials
with smaller values of $h.$

If $w$ contains a sequence $ps,$ then applying the relation
$ps=sp+1,$ we get a sum of two monomials on each of which
$h$ clearly has value $<h(w).$

If $w$ has no subsequence $ps,$ then to fail
to have the form~\eqref{d.W:n_form}, it must have
two $\!p\!$'s with a nonempty string of non-$\!p\!$ terms between them.
Writing $u$ and $v$ for the (possibly empty) segments before and after
these two $\!p\!$'s, we can write
\begin{equation}\begin{minipage}[c]{35pc}\label{d.W:up...pv}
$w\ =\ u\ p\,q\,s^{m_1}\,q\,\dots\,q\,s^{m_{n-1}}\,q\,s^{m_n}\,p\ v,$
\quad where $n\geq 1$ and $s_{m_1},\dots,s_{m_n}\geq 0.$
\end{minipage}\end{equation}
It will now suffice to show that the string between $u$ and $v,$
\begin{equation}\begin{minipage}[c]{35pc}\label{d.W:p...p}
$p\,q\,s^{m_1}\,q\,\dots\,q\,s^{m_n}\,q\,s^{m_n}\,p$
\end{minipage}\end{equation}
is equal in $R'$ to a $\!k\!$-linear combination of words on which $h$
has lower values.

As a first step, let us use the relation~\eqref{d.W:ps^m} in
reverse, to replace the final $s^{m_n}\,p$ of~\eqref{d.W:p...p}
with $p\,s^{m_n} - m_n s^{m_n-1}$ if $m_n>0,$
turning~\eqref{d.W:p...p}
into a linear combination of two monomials.
One of these, the one arising from the $s^{m_n-1}$ term,
has a strictly lower value of $h,$ so we can ignore it.
The other,
\begin{equation}\begin{minipage}[c]{35pc}\label{d.W:p...ps^m}
$p\,q\,s^{m_1}\,q\,\dots\,q\,s^{m_{n-1}}\,q\,p\,s^{m_n}$
\end{minipage}\end{equation}
has value of $h$ that is higher than~\eqref{d.W:p...p},
but only in its $b_{p,s}$ coordinate.
I claim now that we can further rewrite~\eqref{d.W:p...ps^m}
so as to turn it into
a $\!k\!$-linear combination of monomials all involving
fewer $\!q\!$'s, and hence all having lower values of $h$
than~\eqref{d.W:p...p} has.
If $m_n=0$ (the case we temporarily excluded at the
start of this paragraph),~\eqref{d.W:p...ps^m}
is the same as~\eqref{d.W:p...p}; so in either case we have the latter
expression to consider.

If $n=1,$ then~\eqref{d.W:p...ps^m} has the form $p\,q\,p\,s^{m_1},$
which clearly equals $p\,s^{m_1},$ giving a decreased
value of $a_q,$ as desired.
For $n>1,$ the idea will be,
as indicated, to apply~\eqref{d.W:s^m=} to each
of the factors $s^{m_i}$ nestled between the $\!q\!$'s,
and then treat these as we treated the factors $1=ps+tp$
in~\S\ref{S.1=:norm}.
Now replacing $s^m_i$ by $(m_i+1)^{-1}(ps^{m_i+1} - s^{m_i+1}p)$
gives terms with larger values of $a_p,$ $a_s$ and (usually) $b_{p,s}$
than~\eqref{d.W:p...ps^m} had;
but it does not affect the value of $a_q,$
so we will still be safe if the result can be reduced
to a linear combination of terms all having lower values of $a_q.$

To formalize this process, we shall apply
Corollary~\ref{C.multilin}, with $n{-}1$ for the $n$ of
that corollary, taking for $A$ a
$\!2\!$-dimensional $\!k\!$-vector-space with basis written
$\{x,y\},$ and for $A'$ the underlying $\!k\!$-vector-space of $R'.$
Let us define $\!k\!$-linear maps $\mu_1,\dots,\mu_{n-1}: A\to R'$
by letting $\mu_i$ carry $x$ to $(m_i+1)^{-1}s^{m_i+1}\,p,$
and $y$ to $(m_i+1)^{-1}p\,s^{m_i+1},$ so that it
carries $x{+}y$ to $s^{m_i}.$
Define $\mu:A^{n-1}\to A'$ by
\begin{equation}\begin{minipage}[c]{35pc}\label{d.W:*m}
$\mu(a_1,\dots,a_{n-1})\ =
\ p\,q\,\mu_1(a_1)\,q\,\dots\,q\,\mu_{n-1}(a_{n-1})\,q\,p\,s^{m_n}.$
\end{minipage}\end{equation}
By Corollary~\ref{C.multilin}, the sum of the set $S(n{-}1)$
defined in that corollary using the map~\eqref{d.W:*m} equals zero.
We find that the term of that sum in which
all of $a_1,\dots,a_{n-1}$ are $x{+}y$ is exactly~\eqref{d.W:p...ps^m},
while in every other term, at least one of the $n$ $\!q\!$'s
has a $p$ before it and a $p$
after it, so that an application of the relation $pqp=p$ allows
us to reduce the number of $\!q\!$'s.
So~\eqref{d.W:p...ps^m} is equal to a linear combination of
monomials each involving fewer $\!q\!$'s, as claimed.
This completes our proof that the
elements~\eqref{d.W:n_form} span $R'.$

How shall we now show these elements $\!k\!$-linearly independent?
One approach would be to formalize the above argument
as giving a reduction system in the sense of the Diamond Lemma,
and verify that all its ambiguities are reducible.
But that verification was already tedious in the simpler context
of Theorem~\ref{T.1=}.

Rather, let us apply Theorem~\ref{T.1=} to
the generating set~\eqref{d.W:B=} of $R',$ and then
show that when the monomials~\eqref{d.W:n_form} are
expressed in terms of the basis given by
that theorem, they have distinct leading
terms, proving them $\!k\!$-linearly independent.

Of course, to define ``leading term'', we need a total ordering
on the basis of $R'$ in question.
To describe the ordering we will use,
let the ``weight'' of a member of
the basis $B$ of~\eqref{d.W:B=} be the highest exponent of $s$
appearing in its expression.
(E.g., the weight of $s^m p + m s^{m-1}\in B_{+-}$ is $m.)$
We now define a word $w$
in the elements of $B\cup\{q\}$ to be larger than a word $w'$ if
it involves more $\!q\!$'s; or if it involves the same number
of $\!q\!$'s but the total weight of the factors from $B$ is
higher, or if we have equality of both of these, but it has
more terms from $B_{+-};$ while when all of these are equal, let
the total ordering be chosen in an arbitrary fashion.

We now consider a word $w$ of the form~\eqref{d.W:n_form}, and
the operation of expressing it in the normal form
of Theorem~\ref{T.1=} determined by the basis~\eqref{d.W:B=} of
our Weyl algebra; and ask what its leading term with respect to
the above ordering will be.

First, suppose that $b,$ the exponent of $p$ in~\eqref{d.W:n_form},
is zero.
Then to write $w$ as an expression (not reduced, to start
with) in the elements of in $B\cup\{q\},$
we may replace every term $s^{a_i}$ with $a_i>0$ by
$(a_i+1)^{-1}((ps^{a_i+1}) - (s^{a_i+1}p)),$ while writing any
factors $s^{a_i}$ with $a_i=0$ as $1,$ the empty word.
When we multiply this expression out, every pair of successive
$\!q\!$'s are either adjacent, or have between them a generator
$(ps^{a_i+1})\in B_{+-}$ or $(s^{a_i+1}p)\in B_{-+}.$
Those of the resulting words that have a member of $B_{-+}$
anywhere to the left of a member of $B_{+-}$ can be reduced by
one of the reductions in our \eqref{d.1=:xpqpy|->}-series
to a linear combination of words involving smaller numbers of $\!q\!$'s.
Of those that remain, we see that the one that will be largest
under our ordering will be (by the stipulation
regarding elements of $B_{+-}$ in our description
of that ordering), the one with the greatest
number of factors from $B_{+-};$ i.e., the one in which
$(ps^{a_i+1})\in B_{+-}$ has been used in each position
where $a_i>0.$
Clearly this leading reduced word determines the sequence
of exponents $a_i,$ hence it uniquely determines $w.$

Next, suppose $b=1.$
The first step in expressing $w$ in terms of the
generators~\eqref{d.W:B=} is the same as before, except that
the factor $s^{a_m} p,$ unlike the factors $s^{a_i},$
is not modified, since it is, as it stands, a member of $B_{-+}.$
In this case, all the words we get that have a member of $B_{+-}$
{\em after} that term again have a member of $B_{-+}$
to the left of a member of $B_{+-},$
and so can be reduced to terms with fewer $\!q\!$'s,
so the terms that cannot be so reduced must have
factors $(s^{a_i+1}p)\in B_{-+}$ in those positions.
On the other hand, of the terms before $s^{a_m}\,p,$
the largest one under our ordering will again have all
factors from $B$ of the form $(ps^{a_i+1})\in B_{+-}.$
So the largest term occurring determines both the sequence of
$a_i$ and the position where the $p$ occurs in $w$ (namely,
the position where the first element of $B_{-+}$ appears).
Moreover, that leading term
is not equal to the leading term of an expression with $b=0,$
since as we have seen, the latter have no factors in $B_{-+}.$

Finally, if $b>1,$ we have behavior similar to the case $b=1,$ except
that the factor $s^{a_m}\,p^b$ now reduces to the sum
of an element of $B_{++}$ and possibly an expression lower under
our ordering.
(By the description of $B_{++}$ in~\eqref{d.W:B=},
such a lower summand will appear if $b\leq a_m.)$
Only the former summand need be looked at; and we see again
that the unique term having members of $B_{+-}$ before that
element of $B_{++},$ and members of $B_{-+}$ after it,
will be irreducible
under the normal form of Theorem~\ref{T.1=}, and will give
the leading term of our reduced expression.
This leading term now determines both the value of $b$ and, as before,
the values of $m$ and of the $a_i,$ and so again determines $w.$

This completes the proof of the Theorem.
\end{proof}

When $\r{char}(k)=e>0,$ things are somewhat different.
On the one hand,~\eqref{d.W:m_geq_n} simplifies pleasantly whenever
$e\,|\,m(m{-}1)\dots(m{-}n{+}2).$
On the other hand, I claim that the elements $s^m$ with
$m\equiv -1\pmod{e}$ are $\!k\!$-linearly independent modulo $pR+Rp.$
Indeed, since $R$ is spanned over $k$ by elements
$s^m p^n,$ the space $pR$ is spanned by elements
$p\,s^m p^n,$ and using~\eqref{d.W:ps^m} we see that
in the expansions of these elements in terms of
the basis~\eqref{d.W:s^mp^n},
basis elements $s^m$ with $m\equiv -1\pmod{e}$
never appear with nonzero coefficients.
Since they also certainly do not appear with nonzero coefficients in
the expressions in that basis for elements of $Rp,$
they do not appear in the expressions for elements of $pR+Rp.$
One finds that $\{s^m\mid m\equiv -1\pmod{e}\}$
can be taken as a basis of $B_{--}.$
Probably one can get a normal form for $R'$ somewhat like the
above; but with multiple clusters of $\!p\!$'s allowed, separated
by strings $q\,s^m\,q$ with $m\equiv -1\pmod{e}.$
However, I have not looked into this.

\section{Late addendum: mutual inner inverses}\label{S.mutual}

At about the time this paper was accepted for publication,
I received a preprint of~\cite{A+KOM}, in which P.\,Ara and K.\,O'Meara
used results in the preprint version of this note to answer
an open question on nilpotent regular elements in rings.
Their method required them to extend the result
of Theorem~\ref{T.1_notin}, for a certain $R,$
to get a description of the $\!k\!$-algebra generated over
that $R$ by a universal {\em mutual} inner inverse of $p,$
$R''=R\lang q\mid pqp=p,\,qpq=q\rang.$
This led me to wonder whether I could save them that
awkwardness, and get some useful general results,
by extending some of the
material of this paper to mutual inner inverses.
(Incidentally, what I am calling ``mutual inner inverses''
are more often called ``generalized inverses'', and
are so called in \cite{A+KOM}.
But I prefer
to use here a term that highlights their relation with inner inverses.)

The symmetry of the property of being mutually inner inverse
suggests that, just as $p$ is taken in Theorem~\ref{T.1_notin}
to be an element of a fairly general $\!k\!$-algebra $R,$ so $q$ might
be taken from another such $\!k\!$-algebra $S.$
And, indeed, it turns out that if such $p$ and $q$ are {\em nonzero}
and satisfy $1\notin pR+Rp,$ $1\notin qS+Sq,$
then we can build on Theorem~\ref{T.1_notin} to get a very
similar normal form for this construction.
In this normal form, we will, on the one hand, use
a $\!k\!$-basis $B$ for $R$ as in Theorem~\ref{T.1_notin}
(but note that in the present situation, the qualifying phrase
``if $p\neq 0$''
can be removed from the condition that $B_{++}$ contain $p,$
in the first line of~\eqref{d.B_}, since, as noted above, $p$
is here assumed nonzero).
Likewise, we will use a $\!k\!$-basis for $S$ of the analogous form,
\begin{equation}\begin{minipage}[c]{35pc}\label{d.C=}
$C\cup\{1\}\ =\ C_{++}\cup C_{+-} \cup C_{-+}\cup C_{--}\cup \{1\},$
\end{minipage}\end{equation}
where
\begin{equation}\begin{minipage}[c]{35pc}\label{d.C_}
$C_{++}$ is any $\!k\!$-basis of $qS\cap Sq$ which contains $q,$

$C_{+-}$ is any $\!k\!$-basis of $qS$ relative to $qS\cap Sq,$

$C_{-+}$ is any $\!k\!$-basis of $Sq$ relative to $qS\cap Sq,$

$C_{--}$ is any $\!k\!$-basis of $S$ relative to $qS+Sq+k.$
\end{minipage}\end{equation}

We can now state and prove

\begin{theorem}\label{T.mutual}
Suppose $R$ and $S$ are $\!k\!$-algebras
\textup{(}which for notational simplicity we will assume are disjoint
except for the common subfield $k),$ and let $p\in R-\{0\},$
$q\in S-\{0\}$ satisfy
\begin{equation}\begin{minipage}[c]{35pc}\label{d.1_notinx2}
$1\notin pR+Rp,\qquad 1\notin qS+Sq.$
\end{minipage}\end{equation}
Let $B\cup\{1\}$
be a $\!k\!$-basis for $B$ as in~\eqref{d.B=} and~\eqref{d.B_},
and $C\cup\{1\}$ a $\!k\!$-basis for $S$ as
in~\eqref{d.C=} and~\eqref{d.C_}.

Then the $\!k\!$-algebra $T$ freely generated by the two
$\!k\!$-algebras $R$ and $S,$ subject to the two additional relations
\begin{equation}\begin{minipage}[c]{35pc}\label{d.pqp,qpq}
$pqp\ =\ p,\quad qpq\ =\ q,$
\end{minipage}\end{equation}
has a $\!k\!$-basis given by all words in $B\cup C$ which
contain no subwords as in~\eqref{d.xy} or~\eqref{d.xpqpy}
\textup{(}that is, no subwords of the form $xy$ with $x,y\in B,$
or $(xp)\,q\,(py)$ with $xp\in B_{++}\cup B_{-+},$
$py\in B_{++}\cup B_{+-}),$ nor any subwords of the analogous forms
\begin{equation}\begin{minipage}[c]{35pc}\label{d.C:xy}
$xy$ \quad with $x,y\in C,$
\end{minipage}\end{equation}
or
\begin{equation}\begin{minipage}[c]{35pc}\label{d.C:xqpqy}
$(xq)\,p\,(qy)$ \quad with $xq\in C_{++}\cup C_{-+}$
and $qy\in C_{++}\cup C_{+-}\,.$
\end{minipage}\end{equation}

The reduction to the above normal form may be accomplished by
the reductions~\eqref{d.xy|->} and~\eqref{d.xpqpy|->}
of Theorem~\ref{T.1_notin}, together with the analogous reductions,
\begin{equation}\begin{minipage}[c]{35pc}\label{d.C:xy|->}
$xy\ \mapsto\ (xy)_S$ \quad for all $x,y\in C,$
\end{minipage}\end{equation}
and
\begin{equation}\begin{minipage}[c]{35pc}\label{d.C:xqpqy|->}
$(xq)\,p\,(qy)\ \mapsto\ (xqy)_S$ \quad for all
$xq\in C_{++}\cup C_{-+},$ $qy\in C_{++}\cup C_{+-}\,.$
\end{minipage}\end{equation}
\end{theorem}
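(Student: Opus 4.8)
The plan is to run the ring-theoretic Diamond Lemma of \cite[\S1]{<>} exactly as in the proof of Theorem~\ref{T.1_notin}, taking $B\cup C$ as generating set and \eqref{d.xy|->}, \eqref{d.xpqpy|->}, \eqref{d.C:xy|->}, \eqref{d.C:xqpqy|->} as the reduction system. First I would verify that these four families correspond to relations that hold in $T$ and that together they present it. The algebra $T$ is the coproduct of the $\!k\!$-algebras $R$ and $S$ modulo the relations \eqref{d.pqp,qpq}; now \eqref{d.xy|->} and \eqref{d.C:xy|->} are precisely presentations of $R$ and of $S$, so jointly they present that coproduct. The relation $pqp=p$ is the instance of \eqref{d.xpqpy|->} with $xp=p$ and $py=p$ (legitimate since $p\in B_{++}$), and dually $qpq=q$ is the instance of \eqref{d.C:xqpqy|->} with $xq=qy=q$. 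Conversely, every relation \eqref{d.xpqpy|->} is a consequence of $pqp=p$ together with the ring structure of $R$ --- writing $xp=x\,p$ and $py=p\,y$ gives $(xp)\,q\,(py)=x\,(pqp)\,y=xpy$ --- exactly as \eqref{d.xpqpy=} was seen to hold in the proof of Theorem~\ref{T.1_notin}, and dually for \eqref{d.C:xqpqy|->}. Hence the four families present exactly $T$.

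Each reduction replaces a word by a $\!k\!$-linear combination of strictly shorter words, so the length ordering has descending chain condition and is compatible with the system, giving termination just as before. It then remains to resolve the overlap ambiguities. Because the inputs of \eqref{d.xy|->} and \eqref{d.xpqpy|->} involve only letters of $B$ together with the single letter $q$, the ambiguities internal to this pair are word-for-word those of \eqref{d.xyz}--\eqref{d.xpqyqpz}, already shown resolvable in Theorem~\ref{T.1_notin}; and by the $(R,p)\leftrightarrow(S,q)$ symmetry of the entire setup, the ambiguities internal to the pair \eqref{d.C:xy|->}, \eqref{d.C:xqpqy|->} are their mirror images and resolve identically.

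The genuinely new feature, and the step I expect to be the crux, is the possibility of an overlap between a $\!B\!$-reduction and a $\!C\!$-reduction. Since $B$ and $C$ are disjoint, a suffix of a $\!B\!$-input can meet a prefix of a $\!C\!$-input only at letters belonging to both patterns, namely the distinguished letters $p\in B$ and $q\in C$ occurring as the middle factors of \eqref{d.xpqpy|->} and \eqref{d.C:xqpqy|->}. Running through the four reductions pairwise, I expect the only mixed overlaps to be the word $(xp)\,q\,p\,(qy)$, with $xp\in B_{++}\cup B_{-+}$ and $qy\in C_{++}\cup C_{+-}$, reducible either by \eqref{d.xpqpy|->} applied to $(xp)\,q\,p$ (reading the final $p$ as the ``$(py)$'' of that rule) or by \eqref{d.C:xqpqy|->} applied to $q\,p\,(qy)$ (reading the initial $q$ as its ``$(xq)$''), together with its mirror $(xq)\,p\,q\,(py)$. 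Although this is the crux, it turns out to be easy: applying \eqref{d.xpqpy|->} gives $(xpy)_R$ with $py=p$, which equals $xp$, leaving the irreducible word $(xp)\,(qy)$; applying \eqref{d.C:xqpqy|->} gives $(xqy)_S$ with $xq=q$, which equals $qy$, again leaving $(xp)\,(qy)$. So both orders yield the common value $(xp)\,(qy)$, and the mirror ambiguity resolves to $(xq)\,(py)$ by the dual computation. The main labor is thus not analytic but bookkeeping: confirming that these two words are the \emph{only} cross-ambiguities, which follows from the disjointness of $B$ and $C$ ruling out every other way a $\!B\!$-input and a $\!C\!$-input can overlap.

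With all ambiguities resolvable and the system terminating, \cite[Theorem~1.2]{<>} then yields that the words in $B\cup C$ containing none of the input subwords \eqref{d.xy}, \eqref{d.xpqpy}, \eqref{d.C:xy}, \eqref{d.C:xqpqy} form a $\!k\!$-basis of $T$, which is the assertion. Everything apart from the two mixed overlaps is inherited verbatim from Theorem~\ref{T.1_notin} and its left-right dual.
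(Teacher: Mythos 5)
Your proposal is correct and follows essentially the same route as the paper's proof: internal ambiguities are inherited from Theorem~\ref{T.1_notin} and its $(R,p)\leftrightarrow(S,q)$ dual, and the only mixed overlaps are $(xp)\,q\,p\,(qy)$ and $(xq)\,p\,q\,(py)$, which you resolve exactly as the paper does (reading the lone $p$ as $(p1)\in B_{++}$ and the lone $q$ as $(1q)\in C_{++}$), both reductions yielding $(xp)(qy)$, respectively $(xq)(py)$.
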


\begin{proof}
It is clear that the reductions~\eqref{d.xy|->}, \eqref{d.xpqpy|->},
\eqref{d.C:xy|->} and~\eqref{d.C:xqpqy|->} correspond to relations
holding in $T,$ and include enough relations to present
that algebra, and that they all reduce the lengths of their input-words.
So it suffices to check that all ambiguities of
the resulting reduction system are resolvable.

Note that the input-word
of each of the reductions~\eqref{d.xy|->},~\eqref{d.xpqpy|->}
begins and ends with generators from $B,$ while the
input-words of~\eqref{d.C:xy|->} and~\eqref{d.C:xqpqy|->}
begin and end with generators from $C.$
Hence, if an ambiguity in our reduction system involves an
overlap of only one letter, the two words must either
both come from~\eqref{d.xy|->} and/or~\eqref{d.xpqpy|->}, or
both come from~\eqref{d.C:xy|->} and/or~\eqref{d.C:xqpqy|->}.
In the former case, that ambiguity will be resolvable by
Theorem~\ref{T.1_notin}, and in the latter case,
by that same theorem applied with $S,$ $q$ and $p$
in the roles of $R,$ $p$ and $q.$

It remains to consider two-letter overlaps.
We implicitly noted in the proof of Theorem~\ref{T.1_notin}
that there were no such overlaps involving only
reductions~\eqref{d.xy|->} and/or~\eqref{d.xpqpy|->};
so there are likewise none involving
only~\eqref{d.C:xy|->} and/or~\eqref{d.C:xqpqy|->}.
Hence two-letter overlaps must involve one reduction from the former
family and one from the latter.
However, the only generators appearing in both families
of reductions are $p$ and $q.$
From this it is easy to check that the remaining ambiguously
reducible monomials are precisely
\begin{equation}\begin{minipage}[c]{35pc}\label{d.xpqpqy}
$(xp)\cdot q\,p\cdot (qy),$\quad where\quad
$xp\in B_{++}\cup B_{-+},$
$qy\in C_{++}\cup C_{+-},$
\end{minipage}\end{equation}
and
\begin{equation}\begin{minipage}[c]{35pc}\label{d.xqpqpy}
$(xq)\cdot p\,q\cdot (py),$\quad where\quad
$xq\in C_{++}\cup C_{-+},$
$py\in B_{++}\cup B_{+-}.$
\end{minipage}\end{equation}

I claim that
the two competing reductions applicable to~\eqref{d.xpqpqy}
each reduce it to $(xp)(qy).$
Indeed, to reduce the initial string $(xp)\,q\,p$ in~\eqref{d.xpqpqy},
we write the factor $p$ as $(p1)\in B_{++}$ and
apply~\eqref{d.xpqpy|->}, getting $(xp)\,q\,(p1)\mapsto (xp1)_R=(xp);$
which reduces the product~\eqref{d.xpqpqy} to $(xp)(qy).$
The other reduction similarly applies~\eqref{d.C:xqpqy|->} to the final
string $q\,p\,(qy),$ and gives the same result.

Likewise, the two reductions applicable
to~\eqref{d.xqpqpy} both reduce it to $(xq)(py).$

Hence all the ambiguities of our reduction
system are resolvable, so $T$ has a
normal form given by the words irreducible under
that system; that is, those having no
subwords~\eqref{d.xy}, \eqref{d.xpqpy}, \eqref{d.C:xy} or
\eqref{d.C:xqpqy}, as required.
\end{proof}

The construction needed for \cite{A+KOM} can now be gotten as a
special case.

\begin{corollary}\label{C.mutual}
As in Theorem~\ref{T.1_notin},
let $R$ be a $\!k\!$-algebra, $p$ an element of $R$
such that $1\notin pR+Rp,$ and $B\cup\{1\}$
a basis of $R$ as in~\eqref{d.B=} and~\eqref{d.B_}; and
let us also assume $p\neq 0.$
Let
\begin{equation}\begin{minipage}[c]{35pc}\label{d.R''}
$R''\ =\ R\,\lang q\mid pqp=p,\,qpq=q\rang,$
\end{minipage}\end{equation}
i.e., the $\!k\!$-algebra gotten by adjoining to $R$ a universal
mutual inner inverse $q$ to $p.$

Then $R''$ has a $\!k\!$-basis given by all
words in the generating set $B\cup\{q\}$ which contain no subwords
as in~\eqref{d.xy} or~\eqref{d.xpqpy}
\textup{(}that is, no subwords of the form $xy$ with $x,y\in B$
or $(xp)\,q\,(py)$ with $xp\in B_{++}\cup B_{-+},$
$py\in B_{++}\cup B_{+-}),$ nor any subwords
\begin{equation}\begin{minipage}[c]{35pc}\label{d.qpq}
$q\,p\,q.$
\end{minipage}\end{equation}

The reduction to the above normal form may be accomplished by
the reductions~\eqref{d.xy|->} and~\eqref{d.xpqpy|->} of
Theorem~\ref{T.1_notin}, together with the reduction
\begin{equation}\begin{minipage}[c]{35pc}\label{d.qpq|->}
$q\,p\,q\ \mapsto\ q.$
\end{minipage}\end{equation}
\end{corollary}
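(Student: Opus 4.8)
The plan is to deduce the corollary from Theorem~\ref{T.mutual} by taking for $S$ the polynomial ring $k[q]$, which is the free $\!k\!$-algebra on the single generator $q$. First I would verify the hypotheses of that theorem. Certainly $q\neq 0$ in $k[q]$, and since $qS+Sq=q\,k[q]$ is the space of polynomials with zero constant term, we have $1\notin qS+Sq$, as required in~\eqref{d.1_notinx2}. The coproduct of $R$ and $k[q]$ over $k$ is simply $R$ with a free generator $q$ adjoined, so the algebra $T$ of Theorem~\ref{T.mutual}, namely that coproduct modulo the relations $pqp=p$ and $qpq=q$, is exactly the algebra $R''$ of~\eqref{d.R''}. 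The standing assumption $p\neq 0$, together with the given $1\notin pR+Rp$, lets us use a basis $B$ for $R$ as in Theorem~\ref{T.1_notin}, with $p\in B_{++}$.

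Next I would make the basis $C$ of~\eqref{d.C=} and~\eqref{d.C_} explicit for $S=k[q]$. Here $qS=Sq=qS\cap Sq=q\,k[q]$ and $qS+Sq+k=k[q]=S$, so $C_{+-}$, $C_{-+}$ and $C_{--}$ are all empty, while $C_{++}$ is a basis of $q\,k[q]$ containing $q$; the natural choice is $C_{++}=\{q^{n}\mid n\geq 1\}$, giving the standard basis $C\cup\{1\}=\{q^{n}\mid n\geq 0\}$ of $k[q]$. Theorem~\ref{T.mutual} then furnishes a $\!k\!$-basis of $R''$ consisting of the words in the alphabet $B\cup C$ avoiding the four families of forbidden subwords~\eqref{d.xy}, \eqref{d.xpqpy}, \eqref{d.C:xy} and~\eqref{d.C:xqpqy}; call this set of admissible words $N_{BC}$.

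The heart of the argument, and what I expect to be the main obstacle, is to convert this ``$C=\{q^{n}\}$'' normal form into the desired normal form in the single generator $q$. I would introduce the substitution $\phi$ that fixes each $x\in B$ and sends the letter $q^{n}\in C$ to the string of $n$ copies of the generator $q$; since $q^{n}\in C$ and the $\!n\!$-th power of $q$ represent the same element of $R''$, the canonical surjections of the two free algebras onto $R''$ agree after applying $\phi$. It therefore suffices to show that $\phi$ restricts to a bijection from $N_{BC}$ onto the set $N_{Bq}$ of words in $B\cup\{q\}$ avoiding~\eqref{d.xy}, \eqref{d.xpqpy} and~\eqref{d.qpq}. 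Because~\eqref{d.xy} and~\eqref{d.C:xy} force the letters of a word in $N_{BC}$ to alternate between $B$ and $C$, each such word is determined by its $B$-letters and the lengths of the intervening $\!q\!$-runs; conversely, collapsing each maximal $\!q\!$-run of a word with no two adjacent $B$-letters to a single symbol $q^{n}$ recovers a unique preimage, so $\phi$ is a bijection between the alternating words of $B\cup C$ and the words of $B\cup\{q\}$ having no two adjacent $B$-letters. The delicate point is matching the two remaining constraints: a subword $(xp)\,q\,(py)$ of $\phi(w)$ in the sense of~\eqref{d.xpqpy} forces the $q$ between the two $B$-letters to be a run of length exactly $1$, hence to arise from the single letter $q=q^{1}$, so $\phi(w)$ violates~\eqref{d.xpqpy} iff $w$ contains the corresponding subword~\eqref{d.xpqpy} in the $B\cup C$ alphabet; and a subword $q\,p\,q$ of $\phi(w)$ as in~\eqref{d.qpq} forces the standalone $B$-letter $p$ to be flanked by $\!q\!$-runs, i.e.\ by letters $q^{n},q^{m}\in C_{++}$ with $n,m\geq 1$, so that $\phi(w)$ violates~\eqref{d.qpq} iff $w$ contains a subword~\eqref{d.C:xqpqy}. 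Thus $\phi$ carries $N_{BC}$ bijectively onto $N_{Bq}$.

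Once this bijection is established, the conclusion is immediate: $N_{BC}$ maps bijectively to a $\!k\!$-basis of $R''$ by Theorem~\ref{T.mutual}, and since the two projections agree through $\phi$, the set $N_{Bq}$ maps bijectively to that same basis, which is precisely the displayed normal form. Finally I would note that under the substitution $\phi$ the reductions~\eqref{d.C:xy|->} become the trivial identifications $q^{a}q^{b}=q^{a+b}$, while the reductions~\eqref{d.C:xqpqy|->}, which here read $q^{n}\,p\,q^{m}\mapsto q^{\,n+m-1}$, are each accomplished by iterating the single reduction $q\,p\,q\mapsto q$ of~\eqref{d.qpq|->}; so the reduction system described in the corollary is exactly the image under $\phi$ of the reduction system of Theorem~\ref{T.mutual}.
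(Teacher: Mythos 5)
Your proposal is correct and follows essentially the same route as the paper: specialize Theorem~\ref{T.mutual} to $S=k[q]$ with $C=C_{++}=\{q^n\mid n\geq 1\}$, and then observe that expanding each letter $q^n$ into a run of $n$ copies of $q$ carries the resulting irreducible words bijectively onto the words avoiding~\eqref{d.xy}, \eqref{d.xpqpy} and~\eqref{d.qpq}, with the reductions $(q^n)\,p\,(q^m)\mapsto(q^{n+m-1})$ subsumed by~\eqref{d.qpq|->}. Your write-up merely makes explicit the matching of forbidden subwords that the paper dismisses as a ``formal difference.''
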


\begin{proof}
The normal form described is essentially that of
the case of Theorem~\ref{T.mutual} where $S$ is the
polynomial ring $k[q],$ and $C=C_{++}=\{q^n\mid n>0\}.$
There is the formal difference that words in the basis described
in this corollary may contain strings of the
generator $q,$ while each such string is represented in the
basis gotten from Theorem~\ref{T.mutual} as a single generator $(q^n);$
however, the systems of elements of $R''$ described by the resulting
words are clearly the same.
Likewise, in the indicated case of Theorem~\ref{T.mutual},
the reduction~\eqref{d.qpq|->} is supplemented
by the reductions $(q^m)\,p\,(q^n)\mapsto (q^{m+n-1})$ for all $m,n>0;$
but the reduction~\eqref{d.qpq|->} applied to the
subword $qpq$ of the length-$\!m{+}n{+}1\!$
string $q^m p\,q^n$ clearly has the corresponding effect.

We remark that it would  have been no harder -- but also not
significantly easier -- to verify directly that
adding~\eqref{d.qpq|->} to
the reductions of Theorem~\ref{T.1_notin} yields a reduction
system for $R''$ with all ambiguities resolvable.
\end{proof}

It is easy to supplement Theorem~\ref{T.mutual}
with a normal form result paralleling Proposition~\ref{P.M_norm}
for the $\!T\!$-module induced by
a $\!p\!$-tempered right $\!R\!$-module,
or by a $\!q\!$-tempered right $\!S\!$-module, defined analogously.

\section{Further questions and observations}\label{S.further}

Do results paralleling Theorem~\ref{T.mutual} and
Corollary~\ref{C.mutual} hold without the hypotheses
$1\notin pR+Rp$ and $1\notin qS+Sq$?

For the analog of Corollary~\ref{C.mutual}, where we are only
free to modify $R,$ we can
say ``yes'' in the situation of \S\ref{S.1-sided},
and ``probably'' in that of \S\ref{S.1=:norm}.
In former situation, taking $p$ right invertible
in $R,$ we saw in \S\ref{S.1-sided} that in $R'=R\lang q\mid pqp=p\rang$
our adjoined element $q$ also became a right inverse to $p.$
But this makes $p$ and $q$ mutually inner inverse; so
$R''=R';$ so the additional relation $qpq=q$ and
the reduction $qpq\mapsto q$ have no additional effect, nor does
exclusion of the string $qpq$ from words in our basis.
Thus, for this case the analog of Corollary~\ref{C.mutual} is
trivially true.

For $R$ and $p$
as in \S\ref{S.1=:norm}, hand calculations I have made suggest
that the analog of Corollary~\ref{C.mutual} also holds:
All the ambiguities arising from overlaps between~\eqref{d.qpq|->}
and the reductions \eqref{d.1=:xpqpy|->}, (\xppy{2}),
(\xppy{3}), \eqref{d.1=:xpqtp|->} and~(\xptp{2}) appear
to be resolvable, so it is likely that computations like those
of \S\ref{S.1=:norm} can prove the same for
ambiguities involving~\eqref{d.qpq|->} and any of
the reductions~(\xppy{n}) and~(\xptp{n}).

On the other hand, for Theorem~\ref{T.mutual},
the obvious generalization with $R$ no longer
assumed to satisfy $1\notin pR+Rp,$
while $S$ is still assumed to satisfy $1\notin qS+Sq,$
but not restricted to be $k[q],$ definitely does not hold.
For an extreme example, if $p\in R$ is a nonzero element generating
within $R$ a finite-dimensional field extension $F$ of $k,$
then $F$ will also be generated by $p^{-1},$
hence if an element $q\in S$ is to become an inner inverse
of $p$ in an algebra containing (embedded copies of) both
$R$ and $S,$ the subalgebra of $S$ generated by $q$ must
have the same structure $F;$ which cannot be true if
$1\notin qS\cap Sq$ (and
is very restrictive even if this is not assumed).
To see that there are also obstructions to
the analog of Theorem~\ref{T.mutual} when
$1\in pR+Rp-(pR\cup Rp),$
take $S=k[q\mid q^2=0]$ (which clearly satisfies $1\notin qS+Sq).$
We saw in \S\ref{S.1=} that the relations $pqp=p$ and $1\in pR+Rp$
together imply $pqqp=pq+qp-1$~\eqref{d.1=:pqqp}.
Combining this with the relation $q^2=0$ holding in $S,$ we
get $pq+qp=1.$
But $pq,$ $qp$ and $1$ are distinct words not containing
any subwords \eqref{d.1=:xy}-\eqref{d.1=:xpqtp},
\eqref{d.C:xy} or~\eqref{d.C:xqpqy};
so if the analog of Theorem~\ref{T.mutual} held, they
would be $\!k\!$-linearly independent.
Nor does it help to assume, instead,
that $S$ and $q$ satisfy $1\in qS+Sq-(qS\cup Sq);$
for if we take for $S$
the $2\times 2$ matrix ring over $k,$ and for $q$ the square-zero
matrix $e_{12},$ we get the same problem just described.

But perhaps others will be able to find useful normal form results
for some cases of this construction.
\vspace{.3em}

We end this note by recording an alternative way to construct
the algebra $R''=R\,\lang q\mid pqp=p,\,qpq=q\rang$ from
$R'=R\,\lang q\mid pqp=p\rang,$ implicitly noted in
the original version of~\cite{A+KOM}.
This does not require that our algebras be over a
field, so we assume an arbitrary commutative base ring $K.$

\begin{lemma}[after P.\,Ara and K.\,O'Meara, original version of \cite{A+KOM}]\label{L.retract}
Let $R$ be an algebra over a commutative ring $K,$ let $p$ be
any element of $R,$ and let $R'$ be the
$\!K\!$-algebra $R\,\lang q\mid pqp=p\rang.$

Then $R'$ admits a retraction \textup{(}idempotent $\!K\!$-algebra
endomorphism\textup{)} $\varphi$
that fixes the image of $R,$ and takes $q$ to $qpq.$
The retract $\varphi(R')$ is naturally
isomorphic to $R''=R\,\lang q\mid pqp=p,\,qpq=q\rang,$
via an isomorphism $\psi$ that carries $q\in R''$ to
$\varphi(q)=qpq\in\varphi(R').$
\end{lemma}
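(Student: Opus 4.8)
The plan is to produce $\varphi$ from the universal property of $R',$ to check idempotency by a short word computation, and then to identify the image $\varphi(R')$ with $R''$ by exhibiting a pair of mutually inverse homomorphisms.

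First I would construct $\varphi.$ Since $R' = R\lang q\mid pqp=p\rang,$ a $K$-algebra endomorphism of $R'$ fixing $R$ is determined by the image of $q,$ subject only to that image $a$ satisfying $pap=p.$ Taking $a=qpq,$ I must check $p\,(qpq)\,p=p$ in $R';$ indeed $pqpqp=(pqp)(qp)=p(qp)=pqp=p.$ So there is a unique $K$-algebra endomorphism $\varphi$ of $R'$ with $\varphi|_R=\mathrm{id}_R$ and $\varphi(q)=qpq.$ To see that $\varphi$ is idempotent it suffices to check that $\varphi^2$ and $\varphi$ agree on the generator $q$ (they already agree on $R$): here $\varphi^2(q)=\varphi(qpq)=(qpq)\,p\,(qpq)=qpqpqpq,$ and two applications of $pqp=p$ collapse this to $qpq=\varphi(q).$ Thus $\varphi^2=\varphi,$ so $\varphi$ is a retraction of $R'$ onto the subalgebra $\varphi(R'),$ which, since $R'$ is generated by $R\cup\{q\}$ and $\varphi$ fixes $R,$ is generated by $R$ together with $e:=\varphi(q)=qpq.$

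Next I would set up the comparison with $R''.$ The same two computations show $p\,e\,p=p$ and $e\,p\,e=e$ in $R',$ i.e. $e$ is a mutual inner inverse of $p.$ Hence the universal property of $R''=R\lang q\mid pqp=p,\,qpq=q\rang$ yields a $K$-algebra homomorphism $\psi\colon R''\to R'$ fixing $R$ and sending $q\mapsto e;$ since its image is generated by $R\cup\{e\},$ it lands in $\varphi(R'),$ and I regard it as $\psi\colon R''\to\varphi(R').$ For the comparison map in the other direction, let $\pi\colon R'\to R''$ be the canonical surjection, which imposes the extra relation $qpq=q,$ and let $\rho$ be its restriction to $\varphi(R').$ Then $\rho$ fixes $R$ and sends $e=qpq\mapsto\pi(qpq)=qpq=q,$ the last equality holding because $qpq=q$ in $R''.$

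Finally I would check that $\psi$ and $\rho$ are mutually inverse. Both composites are $K$-algebra endomorphisms fixing $R,$ so it is enough to test them on the remaining generator. On $R''$ one has $(\rho\circ\psi)(q)=\rho(e)=q,$ so $\rho\circ\psi=\mathrm{id}_{R''};$ and on $\varphi(R'),$ whose $K$-algebra generators are $R\cup\{e\},$ one has $(\psi\circ\rho)(e)=\psi(q)=e,$ so $\psi\circ\rho=\mathrm{id}_{\varphi(R')}.$ Hence $\psi$ is the asserted isomorphism carrying $q\in R''$ to $\varphi(q)=qpq.$ The only point demanding care is the bookkeeping that each homomorphism in sight fixes $R$ and that $\varphi(R')$ is generated over $R$ by the single element $e;$ granting this, everything reduces to the two reductions $pqpqp=p$ and $qpqpqpq=qpq,$ and I expect no real obstacle.
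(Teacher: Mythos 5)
Your argument is correct and is essentially the paper's own proof: both rest on the two identities $p\,(qpq)\,p=p$ and $(qpq)\,p\,(qpq)=qpq,$ use the first to define $\varphi$ and the second for idempotency, obtain $\psi$ from the universal property of $R'',$ and invert it by restricting the canonical surjection $R'\to R''$ to $\varphi(R').$ Your $\rho$ is exactly the paper's $\theta|_{\varphi(R')},$ and your extra bookkeeping (that $\varphi(R')$ is generated over $R$ by $qpq$) is the same point the paper leaves as "easily seen."
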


\begin{proof}
The defining relation $pqp=p$ of $R'$ clearly implies
the two relations
\begin{equation}\begin{minipage}[c]{35pc}\label{d.p(qpq)p&}
$p\cdot qpq\cdot p=p$\quad and\quad $qpq\cdot p\cdot qpq=qpq.$
\end{minipage}\end{equation}
The first shows that $qpq$ satisfies the relation
over $R$ that is imposed on $q$ in $R';$ hence $R'$ admits an
endomorphism $\varphi$ over $R$ taking $q$ to $qpq,$
and by the second relation, $\varphi$ is idempotent.
Moreover, the relations of~\eqref{d.p(qpq)p&} together show that
the image of $q$ in $\varphi(R')$ satisfies
the relations imposed on $q$ in the definition of $R'';$ so we get a
homomorphism $\psi:R''\to\varphi(R')$ taking $q$ to $\varphi(q)=qpq.$
On the other hand, the factor-map $\theta:R'\to R''$
takes $qpq\in R'$ to $qpq=q\in R'',$ from which it
is easily seen that the restriction of $\theta$ to $\varphi(R')$ is a
$\!2\!$-sided inverse to $\psi,$ establishing the asserted isomorphism.
\end{proof}

So if we know the structure of $R',$ the above lemma gives
us a way of studying $R''.$
However, I have not found it easy to apply this to the description
of $R'$ that we obtained in~\S\ref{S.1=:norm}
for the case $1\in pR+Rp - (pR\cup Rp),$ because substituting
$qpq$ for $q$ in normal-form expressions for elements
of $R'$ gives expressions that are in general not in normal form.
E.g., for $n>1$ the image $\varphi(q^n)$ can be
reduced repeatedly using~\eqref{d.1=:pqqp},
and it is hard to see just what relations such reductions lead to.

\end{document}